\DeclareMathAlphabet{\mathpzc}{OT1}{pzc}{m}{it}
\def\eqdefa{\buildrel\hbox{\footnotesize def}\over =}
\newcommand{\ve}{\varepsilon}
\newcommand{\ud}{\mathrm{d}}
\newcommand{\vv}{\mathbf{v}}
\newcommand{\xx}{\mathbf{x}}
\newcommand{\nn}{\mathbf{n}}
\newcommand{\hh}{\mathbf{h}}
\newcommand{\mm}{\mathbf{m}}
\newcommand{\GG}{\mathbf{G}}
\newcommand{\HH}{\mathbf{H}}
\newcommand{\UU}{\mathbf{U}}
\newcommand{\NN}{\mathbf{N}}
\newcommand{\DD}{\mathbf{D}}
\newcommand{\FF}{\mathbf{F}}
\newcommand{\JJ}{\mathbf{J}}
\newcommand{\II}{\mathbf{I}}
\newcommand{\CB}{\mathcal{B}}
\newcommand{\CC}{\mathcal{C}}
\newcommand{\CE}{\mathcal{E}}
\newcommand{\CN}{\mathcal{N}}
\newcommand{\CF}{\mathcal{F}}
\newcommand{\CA}{\mathcal{A}}
\newcommand{\CH}{\mathcal{H}}
\newcommand{\CL}{\mathcal{L}}
\newcommand{\CJ}{\mathcal{J}}
\newcommand{\MP}{\mathscr{P}}
\newcommand{\fE}{\mathfrak{E}}
\newcommand{\fF}{\mathfrak{F}}
\newcommand{\BS}{{\mathbb{S}^2}}
\newcommand{\BR}{{\mathbb{R}^3}}
\newcommand{\BOm}{\boldsymbol{\Omega}}
\newtheorem{theorem}{Theorem}
\newtheorem{proposition}[theorem]{Proposition}
\newtheorem{lemma}[theorem]{Lemma}
\newtheorem{remark}[theorem]{Remark}
\numberwithin{theorem}{section}
\numberwithin{equation}{section}
\newcommand{\PP}{\mathbf{P}_R}
\newcommand{\TT}{\mathbf{T}}
\newcommand{\DP}{\mathbf{P}}
\newcommand{{\wfE}}{\widehat{\mathfrak{E}}}
\title{Rigorous uniaxial limit of the Qian--Sheng inertial Q-tensor hydrodynamics for liquid crystals}
\author{Sirui Li\footnote{School of Mathematics and Statistics, Guizhou University, Guiyang 550025, China (srli@gzu.edu.cn)},
Wei Wang\footnote{School of Mathematical Sciences, Zhejiang University, Hangzhou 310058, China (wangw07@zju.cn)},
Qi Zeng\footnote{ School of Mathematics and Statistics, Guizhou University, Guiyang 550025, China (qzengmath@163.com)}
}
\date{}
\begin{document}
\maketitle
\begin{abstract}
This article is concerned with the rigorous connections between the inertial Qian--Sheng model and the Ericksen--Leslie model for the liquid crystal flow, under a more general condition of coefficients. More specifically, in the framework of Hilbert expansions, we show that:  (i) when the elastic coefficients tend to zero (also called the uniaxial limit), the smooth solution to the inertial Qian--Sheng model converges to that to the full inertial Ericksen--Leslie model; (ii) when the elastic coefficients and the inertial coefficient tend to zero simultaneously, the smooth solution to the inertial Qian--Sheng model converges to that to the noninertial Ericksen--Leslie model.

\textbf{Keywords.} Liquid crystals, Ericksen--Leslie model, Qian--Sheng model, $Q$-tensor theory

\textbf{AMS subject classifications.}  Primary, 35Q35; Secondary, 35Q30, 76D05
\end{abstract}


\section{Introduction}

Liquid crystals are a kind of complex fluids with local orientational order, in which physical properties lie between liquid and solid. Its local anisotropy typically originates from the nonuniform distribution of nonspherical rigid molecules. The classification of liquid-crystal theories is mainly based on different characterizations of the local anisotropy. As for the uniaxial nematic phase formed by rodlike molecules, the local anisotropy can be described by an orientational distribution function, a second-order symmetric traceless tensor, or a unit vector. The resulting hydrodynamics are molecular theories (such as the Doi--Onsager model \cite{DE}), tensor theories (such as the Beris--Edwards model \cite{BE} and the Qian--Sheng model \cite{QS} in the Landau--de Gennes framework and the molecular-theory-based tensor model \cite{HLWZZ} obtained by closure approximations) and vector theories (such as the Ericksen--Leslie model\cite{Ericksen1, Leslie1}), respectively.
Since they are derived from different physical considerations and are investigated widely in liquid crystals, it is important to explore the connection between different theories. In this article, we are concerned with the rigorous connection between the Ericksen--Leslie model and the Qian--Sheng model--a typical example of $Q$-tensor hydrodynamics.

The hydrodynamics for the uniaxial nematic phase, i.e., the well-known Ericksen--Leslie model, has been studied extensively (see \cite{LW4, Ball, WZZ4} and the references therein). In $\mathbb{R}^2$, the existence of global weak solutions is discussed \cite{Lin2, HX, HLW, WW}. Under a special assumption of initial datum, the global existence of weak solutions in $\mathbb{R}^3$ is investigated in \cite{LW2}. The well-posedness of smooth solutions to the original Ericksen--Leslie model has been studied for the whole space \cite{WZZ1, WW} and for bounded domains \cite{HNPS}. The role of Parodi's relation in the well-posedness and stability of the Ericksen--Leslie model is analyzed in \cite{WXL}. For an inertial analogue, the corresponding smooth well-posedness and global regularity for small datum are also considered \cite{JL2, CW, HJLZ1}. On the other hand, the $Q$-tensor hydrodynamics leads to a system, which couples the incompressible Navier--Stokes equation with the evolution equation of the $Q$-tensor field. The analysis concerning the $Q$-tensor dynamical model has been well-studied in recent years. The well-posedness results of the Beris--Edwards model have been established for the whole space \cite{PZ1, PZ2, HD, DHW} and for bounded or periodic domains \cite{ADL1, LYW1, Wilk}, respectively. Concerning the inertial Qian--Sheng model and its variations, the well-posedness results are studied in \cite{DZ, FRSZ}. More analytic results for the above two types of models are summarized in several review articles \cite{LW4, Ball, WZZ4}.

We turn to the connection between different liquid-crystal theories. The fundamental subjection usually involves the singular limit problem.
From molecular theories or tensor theories, one could derive the Ericksen--Leslie theory with its coefficients expressed by those in the molecular models or tensor models. Such derivations are based on the fact that the minimum of the bulk energy must be uniaxial and the whole procedure is done by the Hilbert expansion. In this aspect, recent research has produced important progress, but many issues still remain.
The connection between the Doi--Onsager model and the noninertial Ericksen--Leslie model has been investigated for formal derivations \cite{KD, EZ, HLWZZ} and for rigorous derivations \cite{WZZ2}, under the small Deborah number limit. The rigorous derivations of the Ericksen--Leslie model from $Q$-tensor hydrodynamics have been established in \cite{WZZ3, LWZ, LW}. When the biaxial minimizer of the bulk energy is considered, the connections between the biaxial frame hydrodynamics and the molecular-theory-based two-tensor hydrodynamics are studied, both in the sense of formal expansion \cite{LX1} and rigorous biaxial limit \cite{LX2}. In a sense of weak solutions, the relations between different dynamical theories are also explored in \cite{LYW2, XinZ}.

The aim of this article is to rigorously justify connections between the inertial Qian--Sheng model and the Ericksen--Leslie model in the framework of smooth solutions. The main novelty of our works is stated as follows:
\begin{itemize}
\item Under a more general condition of coefficients, we rigorously prove that the smooth solution to the inertial Qian--Sheng model converges to that of the full inertial Ericksen--Leslie model. The previous work \cite{LW} established this result based on a rather strong assumption: $\mu_1\gg J$, i.e., the damping coefficient is much greater than the inertia one. In this article, we remove the strong assumption and obtain an extension of the work \cite{LW}, see Section \ref{m0-section}.
\item We investigate the rigorous singular limit of the inertial Qian--Sheng model as two small parameters tend to zero simultaneously. Some attempts have been carried out when the inertial coefficient is regarded as the unique small parameter. The zero inertia limit has been studied for the inertial Ericksen--Leslie model \cite{JL1, JNT1} and for the inertial Qian--Sheng model \cite{LM}. At the moment, the solution to the inertial Ericksen--Leslie or Qian--Sheng model converges to the solution of the corresponding noninertial one. In our work, the elastic coefficient $\ve$ and the inertial coefficient $\eta$ are all considered as small parameters satisfying $\eta=\ve^m$ with $m$ a nonnegative integer. We rigorously justify that the smooth solution to the inertial Qian--Sheng model converges to that to the noninertial Ericksen--Leslie model. The structure and form of the limit model have changed essentially. One could refer to Section \ref{mneq0-section}.
\end{itemize}
The main strategy of our proof is to construct approximate solutions near the solution to the limit model by the Hilbert expansion, and then derive the uniform estimates for the difference between true solutions and certain approximate ones. Some new issues and challenges will arise since the $Q$-tensor system enjoys the nonlinear hyperbolic structure and the strong condition of coefficients is discarded. To deal with these difficulties, we will give a new and general way to obtain the desired estimates for the singular terms in the remainder system. A more delicate modified energy with a key additional term is provided to handle the loss of the strong assumption for coefficients. We also deeply study the intrinsic symmetric structure of the remainder system to eliminate higher-order derivative terms, and then provide relatively simple estimates compared with the previous work \cite{LW}. We will clarify the main challenges of our work at the end of Theorem \ref{main theorem}.

In the rest of this section, let us introduce some notations, followed by the Ericksen--Leslie theory and the Landau--de Gennes theory. Then, we state the main results.

{\it Notation and Conventions.}
The space of symmetric traceless tensors is defined by
\begin{align*}
    \mathbb{S}^3_0\eqdefa\{Q\in\mathbb{R}^{3\times3}: Q_{ij}=Q_{ji},~~Q_{ii}=0\},
\end{align*}
which is endowed with the inner product $Q_1:Q_2=Q_{1ij}Q_{2ij}$. Here, we have adopted the Einstein summation convention on repeated indices and will assume it throughout the article. The matrix norm on the five-dimensional space $\mathbb{S}^3_0$ is defined by $|Q|\eqdefa\sqrt{Q_{ij}Q_{ij}}$. For any two tensors $A,B\in \mathbb{S}_0^3$, we denote $(A\cdot B)_{ij}=A_{ik}B_{kj}$ and $A:B=A_{ij}B_{ij}$, and their commutator $[A,B]=A\cdot B-B\cdot A$. For any $Q_1,Q_2\in L^2(\mathbb{R}^3,\mathbb{R}^{3\times3})$, their inner product is defined as
\begin{align*}
\langle Q_1,Q_2\rangle\eqdefa\int_{\mathbb{R}^3}Q_{1ij}(\xx):Q_{2ij}(\xx)\ud\xx.
\end{align*}
The notation $\nn_1\otimes\nn_2$ represents the tensor product of two vectors $\nn_1$ and $\nn_2$, and the symbol $\otimes$ is usually omitted for simplicity.  We utilize $f_{,j}$ to denote $\partial_j f$ and $\II$ to denote the second-order identity tensor.

\subsection{Ericksen--Leslie theory}

The uniaxial hydrodynamics for nematic phases is called the Ericksen--Leslie theory \cite{Ericksen1, Leslie1}, which is a coupled system between the evolution equation of the unit vector field $\nn(t,\xx)$ and the Navier--Stokes equation of the velocity field $\vv(t,\xx)$. The full form of the Ericksen--Leslie model is given by
\begin{align}
&\nn\times(I\ddot{\nn}-\hh+\gamma_1\NN+\gamma_2\DD\cdot\nn)=0,\label{EL-vv}\\
    &\dot{\vv}=-\nabla p+\nabla\cdot(\sigma^L+\sigma^E), \label{EL-nn}\\
    &\nabla \cdot \vv=0, \label{EL-div}
\end{align}
where the notation $\dot{f}=(\partial_t+\vv\cdot\nabla) f$ stands for the material derivative, and $I$ is {\it the moment of inertial density} which is usually small. The inertial term $\ddot{\nn}$ is the material derivative of $\dot{\nn}$. The symmetric and skew-symmetric parts of the velocity gradient are denoted as, respectively,
\begin{align*}
\DD=\frac{1}{2}\left(\nabla\vv+(\nabla\vv)^T\right),~~\BOm=\frac{1}{2}\left(\nabla\vv-(\nabla\vv)^T\right).
\end{align*}
The co-rotational time flux of the director $\nn$ is defined by $\NN=\dot{\nn}-\BOm\cdot\nn$. The molecular field $\hh$ is given by
\begin{align*}
\hh=-\frac{\delta E_F}{\delta \nn}=-\frac{\partial E_F}{\partial\nn}+\nabla\cdot\frac{\partial E_F}{\partial(\nabla\nn)}.
\end{align*}
Here, $E_F$ is called the Oseen--Frank distortion energy, i.e.,
\begin{align*}
    E_F(\nn,\nabla\nn)=&\frac{k_1}{2}(\nabla\cdot\nn)^2+\frac{k_2}{2}(\nn\cdot(\nabla\times\nn))^2+\frac{k_3}{2}|\nn\times(\nabla\times\nn)|^2\\
    &+\frac{k_2+k_4}{2}(\text{Tr}(\nabla\nn)^2-(\nabla\cdot\nn)^2),
\end{align*}
where the constants $k_1, k_2$ and $k_3$ represent deformation moduli of {\it splay}, {\it twist} and {\it bend}, respectively, and the last term is a null Lagrangian which is related to boundary terms.

In the equation of $\vv$, the pressure $p$ ensures the incompressible condition (\ref{EL-div}).  The constitutive equations for the viscous (Leslie) stress $\sigma^L$ and the elastic (Ericksen) stress $\sigma^E$ are given by, respectively,
\begin{align}
\sigma^L&=\alpha_1(\nn\nn:\DD)\nn\nn+\alpha_2\nn\NN+\alpha_3\NN\nn+\alpha_4\DD+\alpha_5\nn\nn\cdot\DD+\alpha_6\DD\cdot\nn\nn,\\
   \sigma^E&=-\frac{\partial E_F}{\partial(\nabla\nn) }\cdot(\nabla\nn)^T,
\end{align}
where the constants $\alpha_1,\dots,\alpha_6$, called the Leslie viscosities, together with the coefficients $\gamma_1, \gamma_2$ satisfy
 \begin{align}
      \alpha_2&+\alpha_3=\alpha_6-\alpha_5,\label{Parodi1}\\
      \gamma_1=\alpha_3&-\alpha_2,\quad\gamma_2=\alpha_6-\alpha_5.\label{Parodi2}
 \end{align}
The equality  (\ref{Parodi1}) is the so-called Parodi relation. The relations (\ref{Parodi1})--(\ref{Parodi2}) will ensure a basic energy law of the system (\ref{EL-vv})--(\ref{EL-div}):
\begin{align}
    \frac{\ud}{\ud t}\int_{\BR}\Big(&\frac{1}{2}|\vv|^2+\frac{I}{2}|\dot{\nn}|^2+E_F\Big)\ud \xx=-\int_{\BR}\bigg(\Big(\alpha_1+\frac{\gamma_2^2}{\gamma_1}\Big)(\DD:\nn\nn)+\alpha_4|\DD|^2\nonumber\\
    &+\Big(\alpha_5+\alpha_6-\frac{\gamma_2^2}{\gamma_1}\Big)|\DD\cdot\nn|^2+\frac{1}{\gamma_1}|\nn\times(\hh-I\ddot{\nn})|^2\bigg)\ud\xx.\label{EL-energy-law}
\end{align}
Here, the coefficients in (\ref{EL-energy-law}) satisfy
\begin{align}   \alpha_4>0,\quad 2\alpha_4+\alpha_5+\alpha_6-\frac{\gamma_2^2}{\gamma_1}>0,\quad\alpha_1+\frac{3}{2}\alpha_4+\alpha_5+\alpha_6>0,\label{alpha-relation}
\end{align}
which implies that the energy in (\ref{EL-energy-law}) is dissipated (see Lemma \ref{nn, D} for details).

It is worth emphasizing that (\ref{EL-vv})--(\ref{EL-div}) is a hyperbolic-parabolic system due to the appearance of the inertial term $I\ddot{\nn}$. If the inertial term is neglected, then the system (\ref{EL-vv})--(\ref{EL-div}) becomes immediately a parabolic Ericksen--Leslie system. In general, the inertial system with the hyperbolic feature will bring more challenges in analysis, compared with its noninertial counterpart.

\subsection{Landau--de Gennes  theory}

In this theory, the local orientational order of nematic phases is described by a second-order symmetric traceless tensor $Q(\xx)$. Physically, $Q(\xx)$ can be understood as the second-order moment of $f$:
\begin{align*}
Q(\xx)=\int_\BS\Big(\mm\mm-\frac{1}{3}\II\Big)f(\xx,\mm)\ud\mm,
\end{align*}
where $f(\xx,\mm)$ represents the distribution function on $\BS$ with the orientation parallel to $\mm$ at material point $\xx$. We could classify $Q(\xx)$ into three classes.
The tensor $Q(\xx)$ is said to be {\it isotropic} if it has three equal eigenvalues, i.e., $Q(\xx)=0$, {\it uniaxial} if it has two equal nonzero eigenvalues, and {\it biaxial} if it has three distinct  eigenvalues.

The Landau--de Gennes free energy contains two parts, the bulk energy and the elastic energy, which is given by
\begin{align}
\mathcal{F}(Q,\nabla Q)
&=\int_{\BR}\bigg\{-\frac{a}2\text{Tr}(Q^2)
-\frac{b}{3}\text{Tr}(Q^3)+\frac{c}{4}(\text{Tr}(Q^2))^2\nonumber\\
&\qquad\quad+\frac{1}{2}\Big(L_1|\nabla Q|^2+L_2Q_{ij,j}Q_{ik,k}
+L_3Q_{ij,k}Q_{ik,j}\Big) \bigg\}\ud\xx\nonumber\\
&\eqdefa\int_{\BR}\Big(f_b(Q)+f_e(\nabla Q)\Big)\ud\xx,\label{eq:Landau-energy}
\end{align}
where the bulk energy density $f_b$ describes transitions between homogeneous phases, and nonnegative parameters $a, b, c$ depend on the material and temperature.
The elastic energy density $f_e$ penalizing spatial nonhomogeneity contains some quadratic terms of $\nabla Q$, and the material dependent elastic constants $L_i(i=1,2,3)$ are usually small. More detailed introductions can be referred to \cite{DeG,MN}.

To ensure the strict positiveness of the elastic energy, we assume that the elastic coefficients $L_i(i=1,2,3)$ satisfy
\begin{align*}
    L_1>0,\quad L_1+L_2+L_3>0,
\end{align*}
i.e., there exists a constant $L_0=L_0(L_1,L_2,L_3)>0$, such that
\begin{align}\label{L:postive}
\int_{\BR}f_e(\nabla Q)\ud\xx\geq L_0\|\nabla Q\|^2_{L^2}.
\end{align}

The inertial Qian--Sheng model  \cite{QS} is a representative $Q$-tensor hydrodynamics based on the Landau--de Gennes framework, which is given by
\begin{align}
J\ddot{Q}+\mu_1\dot{Q}=&~\HH-\frac{\mu_2}{2}\DD+\mu_1[\BOm,  Q], \label{eq:Q-general-intro1}\\
\partial_t\vv+\vv\cdot\nabla\vv=&-\nabla{p}+\nabla\cdot\big(\sigma+\sigma^{d}\big),
\label{eq:Q-general-intro2}\\
\nabla\cdot\vv=&~0, \label{eq:Q-general-intro3}
\end{align}
where $\dot{Q}=(\partial_t+\vv\cdot\nabla)Q$ and $\ddot{Q}=(\partial_t+\vv\cdot\nabla)\dot{Q}$. The constant $J$ represents the {\it inertial density} which is responsible for the hyperbolic feature of the system. In this article, we will consider the case that $J$ is not a small parameter, and the case that $J$ is very small, respectively. Moreover, the viscous stress $\sigma$, the distortion stress $\sigma^d$ and the molecular field $\HH$ are expressed by
\begin{align} \label{vis-stressQ}\nonumber
 \sigma(Q,\vv)=&~\beta_1 Q(Q:\DD)+\beta_4 \DD+\beta_5\DD\cdot Q+\beta_6Q\cdot\DD
+\beta_7(\DD\cdot Q^2+Q^2\cdot \DD)\\
&+\frac{\mu_2}{2}(\dot{Q}-[\BOm, Q])+\mu_1\big[Q, (\dot{Q}-[\BOm, Q])\big],  \\
\sigma^d=&~\sigma^d(Q,Q), \label{vis-fieldQ}\\
\HH_{ij} =& -\Big(\frac{\delta{\mathcal{F}(Q, \nabla Q)}}{\delta Q}\Big)_{ij}=-\frac{\partial f_b}{\partial Q_{ij}}+\partial_k\Big(\frac{\partial f_e}{\partial Q_{ij, k}}\Big),\nonumber
\end{align}
where the tensor $\sigma^d(Q,\widetilde{Q})$ is defined as
\begin{align*}
\sigma^d_{ji}(Q, \widetilde{Q})\eqdefa -\frac{\partial f_e}{\partial Q_{kl, j}}\partial_i\widetilde{Q}_{kl}=-(L_1Q_{kl, j}\widetilde{Q}_{kl, i}+L_2Q_{km, m}\widetilde{Q}_{kj, i}+L_3Q_{kj, l}\widetilde{Q}_{kl, i}).
\end{align*}
 The viscosity coefficients $\beta_1,  \beta_4,  \beta_5,  \beta_6,  \beta_7,  \mu_1$,  and $\mu_2$ in (\ref{vis-stressQ}) can be linked by
\begin{align}
\beta_6-\beta_5=\mu_2,\label{Q-Parodi}
\end{align}
which corresponds to the Parodi relation (\ref{Parodi1}). We also assume that the coefficients in the system (\ref{eq:Q-general-intro1})--(\ref{eq:Q-general-intro3}) satisfy
\begin{equation}\label{Beta-relation}
    \left\{\begin{aligned}
        &\beta_1,\beta_4,\mu_1>0,~\beta_4-\frac{\mu_2^2}{4\mu_1}>0,~\beta_7\ge 0;\\&(\beta_5+\beta_6)^2<8\beta_7\left(\beta_4-\frac{\mu_2^2}{4\mu_1}\right)~\text{if }~\beta_7\ne 0;~\beta_5+\beta_6=0~\text{if }\beta_7=0.
    \end{aligned} \right.
\end{equation}
The relations (\ref{Q-Parodi})--(\ref{Beta-relation}) will guarantee that the system (\ref{eq:Q-general-intro1})--(\ref{eq:Q-general-intro3}) enjoys the following energy dissipation law \cite{LW}:
\begin{align}
    &\frac{\ud}{\ud t}\bigg(\int_{\BR}\frac{1}{2}(|\vv|^2+J|\dot{Q}|^2)\ud\xx+\CF(Q,\nabla Q)\bigg)\nonumber\\
    &\quad=-\beta_1\|Q:\DD\|^2_{L^2}-\left(\beta_4-\frac{\mu_2^2}{4\mu_1}\right)\|\DD\|^2_{L^2}-(\beta_5+\beta_6)\langle\DD\cdot Q,\DD\rangle\nonumber\\
    &\qquad-2\beta_7\|\DD\cdot Q\|^2_{L^2}-\mu_1\left\|\dot{Q}-[\BOm,Q]+\frac{\mu_2}{2\mu_1}\DD\right\|^2_{L^2}.
\end{align}

\subsection{Main results}

This subsection will be devoted to formulating our main results. In the inertial Qian--Sheng model, two different small parameters originate from the elastic coefficients $L_i(i=1,2,3)$ and the inertial constant $J$, respectively. Since the elastic coefficients are very small, we consider the rescaled energy functional with a small parameter $\ve$,
\begin{align*}
\CF^{\ve}(Q,\nabla Q)=\int_{\BR}\left(\frac{1}{\ve}f_b(Q)+f_e(\nabla Q)\right)\ud \xx,
\end{align*}
where the coefficients satisfy $a, b, c, L_i(1\le i\le 3)\sim O(1)$. Another is the small inertial density parameter denoted by $\eta$. The relationship between $\ve$ and $\eta$ determines the type of problem we have. In this article, we take $\eta=\ve^m$ with $m$ being a nonnegative integer.

The main aim of this article is to study the singular limit problem of the inertial Qian--Sheng system with small parameter $\ve$, which is given by
\begin{align}
\ve^mJ\ddot{Q}^{\ve}+\mu_1\dot{Q}^{\ve}=&~\HH^{\ve}-\frac{\mu_2}{2}\DD^{\ve}+\mu_1[\BOm^{\ve},  Q^{\ve}], \label{eq:Q}\\
\partial_t{\vv}^{\ve}+\vv^{\ve}\cdot\nabla {\vv}^{\ve}=&-\nabla{p^{\ve}}+\nabla\cdot\Big(\sigma(Q^{\ve}, \vv^{\ve})+\sigma^{d}(Q^{\ve}, Q^{\ve})\Big),
\label{eq:vv}\\
\nabla\cdot\vv^{\ve}=&~0, \label{eq:free div}
\end{align}
where
$\dot{Q}^{\ve}=(\partial_t+\vv^{\ve}\cdot\nabla )Q^{\ve}$, $\ddot{Q}^{\ve}=(\partial_t+\vv^{\ve}\cdot\nabla)Q^{\ve}$
and
\begin{align*}
    \DD^{\ve}=\frac{1}{2}\left(\nabla\vv^{\ve}+(\nabla\vv^{\ve})^T\right),~\BOm^{\ve}=\frac{1}{2}\left(\nabla\vv^{\ve}-(\nabla\vv^{\ve})^T\right).
\end{align*}
The viscous stress $\sigma(Q^{\ve}, \vv^{\ve})$ and the distortion stress $\sigma^{d}(Q^{\ve}, Q^{\ve})$ can be expressed by (\ref{vis-stressQ}) and (\ref{vis-fieldQ}), respectively.
The molecular field $\HH^{\ve}$ is given by
\[\HH^{\ve}=-\frac{\delta\CF^{\ve}}{\delta Q^\ve}=-\frac{1}{\ve}\CJ(Q^{\ve})-\CL(Q^{\ve}),\]
where two operators are defined as, respectively,
\begin{align}
    \CJ(Q)=&-aQ-bQ^2+c|Q|^2Q+\frac{b}{3}|Q|^2\II,\label{CJ-LP}\\
    (\CL(Q))_{kl}=&-\Big(L_1\Delta Q_{kl}+\frac{1}{2}(L_2+L_3)\Big(Q_{km,ml}+Q_{lm,mk}-\frac{2}{3}\delta_{kl}Q_{ij,ij}\Big)\Big).\label{CL-LP}
\end{align}

We denote by ${\chi}$ the characteristic function given by
\begin{align}\label{chi-fun}
    \chi(m)=\begin{cases}
        1,\quad m=0,\\
        0,\quad m\in\mathbb{Z}^+.
    \end{cases}
\end{align}
The relationship between coefficients in the Qian--Sheng model and those in the corresponding limit model can be expressed by
 \begin{equation}\label{coefficients}
       \left\{\begin{aligned}
           &\alpha_1=\beta_1s^2,\quad\alpha_2=\frac{1}{2}\mu_2s-\mu_1s^2,\\
           &\alpha_3=\frac{1}{2}\mu_2s+\mu_1s^2,\quad\alpha_4=\beta_4-\frac{s}{3}(\beta_5+\beta_6)+\frac{2}{9}\beta_7s^2,\\
           &\alpha_5=\beta_5s+\frac{1}{3}\beta_7s^2,\quad\alpha_6=\beta_6s+\frac{1}{3}\beta_7s^2,\\
           &\gamma_1=2\mu_1s^2,\quad\gamma_2=\mu_2s,\quad I={{\chi}}(m)2s^2J,\\
           &k_1=k_3=(2L_1+L_2+L_3)s^2,\quad k_2=2L_1s^2,\quad k_4=L_3s^2.
       \end{aligned}\right.
   \end{equation}
If $\chi(m)=1$, then (\ref{coefficients}) are defined as the coefficients in the full inertial Ericksen--Leslie model. If $\chi(m)=0$, i.e., the inertial constant $I=0$, then (\ref{coefficients}) becomes the coefficients in the noninertial Ericksen--Leslie model.

We now state the main results. Let $\CH_{\nn}$ be the Hessian of the bulk energy $f_b$ at its critical points $Q^{\ast}$, and the projection operators $\MP^{in}$ and $\MP^{out}$ be defined in (\ref{projection_in1})--(\ref{projection_out1}) concerning the kernel of $\CH_{\nn}$ in the Section \ref{crit-liner}.

\begin{theorem}\label{main theorem}
Let $m$ be a nonnegative integer, assume that $(\nn(t,\xx), \vv(t,\xx))$ is a smooth solution of the system {\em (\ref{EL-vv})--(\ref{EL-div})} on $[0, T]$ with coefficients defined by {\em (\ref{coefficients})}, and  satisfies
\begin{align}\label{initial-regularity}
     (\vv,\nabla \nn,{{\chi}}(m)\partial_t \nn)\in L^\infty([0, T];H^{\ell}),\quad {\ell\ge 20.}
\end{align}
    Let $Q_0(t,\xx)=s(\nn(t,\xx)\nn(t,\xx)-\frac{1}{3}\II)$ be the uniaxial minimizer of the bulk energy $f_b$, and the functions $(Q_1,Q_2,Q_3,\vv_1,\vv_2)$ are determined by Proposition \ref{prop Q(1)} for $m=0$ and by Proposition \ref{nu:prop Q(1)} for $m\in\mathbb{Z}^+$. Suppose that the initial data $(Q^\ve(0,\xx),\partial_tQ^\ve(0,\xx),\vv^\ve(0,\xx))$ takes the form
   \begin{align*}
&Q^\ve(0,\xx)=\sum_{k=0}^3\ve^kQ_k(0,\xx)+\ve^3Q_R(0,\xx),~~\vv^\ve(0,\xx)=\sum_{k=0}^2\ve^k\vv_k(0,\xx)+\ve^3\vv_R(0,\xx),\\
 &\partial_tQ^\ve(0,\xx)=\sum_{k=0}^3\ve^{k}\partial_tQ_k(0,\xx)+\ve^{3}\partial_tQ_R(0,\xx),
   \end{align*}
where $(Q_R(0,\xx),\partial_tQ_R(0,\xx),\vv_R(0,\xx))$ fulfills
\begin{align}
\|\vv_R(0,\xx)\|_{H^2}+\|Q_R(0,\xx)\|_{H^3}+\ve^{\frac{m}{2}}\|\partial_t Q_R(0,\xx)\|_{H^2}+\frac{1}{\ve}\big\|\MP^{out}(Q_R)(0,\xx)\big\|_{L^2}\leq E_0.\label{initial condition}
\end{align}
Then there exists $\ve_0$ and $E_1>0$ such that for  all $\ve<\ve_0$, the Qian--Sheng model {\em (\ref{eq:Q})--(\ref{eq:free div})} has a unique solution $(Q^\ve(t,\xx),\vv^\ve(t,\xx))$ on $[0,T]$ that possesses the following Hilbert expansion:
\[Q^\ve(t,\xx)=\sum_{k=0}^3\ve^kQ_k(t,\xx)+\ve^3Q_R(t,\xx),~~\vv^\ve(t,\xx)=\sum_{k=0}^2\ve^k\vv_k(t,\xx)+\ve^3\vv_R(t,\xx),\]
where, for any $t\in[0,T]$, the remainder $(Q_R,\vv_R)$ satisfies \[\widetilde{{\fE}}_m(Q_R,\vv_R)\leq E_1.\]
Here, the functional $\widetilde{{\fE}}_m(Q,\vv)$ is defined by
\begin{align}
    \widetilde{{\fE}}_{m}(Q,\vv)(t)\eqdefa\int_{\BR}&\bigg\{\Big(|\vv|^2+\ve^{m}J|\partial_tQ|^2+\frac{1}{\ve}\CH_{\nn}^\ve(Q):Q+|Q|^2\Big)\nonumber\\&+\ve^2\Big(|\nabla\vv|^2+\ve^{m} J|\partial_t\partial_iQ|^2+\frac{1}{\ve}\CH_\nn^\ve(\partial_iQ):\partial_iQ\Big)\nonumber\\&+\ve^4\Big(|\Delta\vv|^2+\ve^{{m}}J|\Delta\partial_tQ|^2+\frac{1}{\ve}\CH_\nn^\ve(\Delta Q):\Delta Q\Big)\bigg\}\ud\xx,\label{definition of widetile(fE)}
\end{align}
and $\CH_\nn^\ve(Q)=\CH_\nn(Q)+\ve\CL(Q)$, and the constant $E_1$ depends on $m$ but is independent of $\ve$.
\end{theorem}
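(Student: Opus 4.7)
The plan follows the Hilbert-expansion strategy: construct an approximate solution order by order in $\ve$ and close the argument by a uniform-in-$\ve$ a priori estimate on the remainder via a carefully designed modified energy. Substituting the ansatz $Q^\ve=\sum_{k=0}^3\ve^kQ_k+\ve^3Q_R$ and $\vv^\ve=\sum_{k=0}^2\ve^k\vv_k+\ve^3\vv_R$ into (\ref{eq:Q})--(\ref{eq:free div}) and matching powers of $\ve$, the $O(\ve^{-1})$ equation is $\CJ(Q_0)=0$, so the uniaxial-minimizer requirement fixes $Q_0=s(\nn\nn-\tfrac{1}{3}\II)$. The higher orders are linear problems whose $\ker\CH_\nn$-compatibility conditions, obtained via $\MP^{in}$, reproduce the (full or noninertial) Ericksen--Leslie equation for $\nn$ with the coefficient identification (\ref{coefficients}); the profiles $(Q_1,Q_2,Q_3,\vv_1,\vv_2)$ and their smooth bounds are supplied by Propositions \ref{prop Q(1)} and \ref{nu:prop Q(1)}. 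Local-in-time existence of the remainder $(Q_R,\vv_R)$ is standard for this quasilinear hyperbolic--parabolic system, so the theorem reduces to propagating the bound $\widetilde{\fE}_m\le E_1$ on $[0,T]$ by a continuity argument.

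The remainder equations take the schematic form
\begin{align*}
\ve^m J\partial_t^2 Q_R+\mu_1\partial_t Q_R+\tfrac{1}{\ve}\CH_\nn(Q_R)+\CL(Q_R)+\tfrac{\mu_2}{2}\DD_R-\mu_1[\BOm_R,Q_0]&=\CN_Q+\ve^3\CR_Q,\\
\partial_t\vv_R+\CA(Q_R,\vv_R)&=\CN_\vv+\ve^3\CR_\vv,\qquad \nabla\cdot\vv_R=0,
\end{align*}
where $\CN_\bullet$ is at least quadratic in $(Q_R,\vv_R)$ and the source $\CR_\bullet$ is uniformly bounded in $\ve$. The estimate is carried out at three derivative levels with weights $1$, $\ve^2$, $\ve^4$ matching (\ref{definition of widetile(fE)}): pair the $Q_R$-equation with $\partial_t Q_R$ and the $\vv_R$-equation with $\vv_R$, then repeat after applying $\partial_i$ and $\Delta$. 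This produces the kinetic energy $\ve^m J|\partial_tQ_R|^2+|\vv_R|^2$ together with the coercive $\ve^{-1}\CH_\nn^\ve(Q_R){:}Q_R$, while the Parodi identity (\ref{Q-Parodi}) and the structural inequalities (\ref{Beta-relation}) supply the dissipation $\mu_1|\partial_tQ_R|^2+\beta_4|\DD_R|^2$, which controls the skew coupling between $\partial_tQ_R$ and $(\DD_R,\BOm_R)$. The Fredholm-type bound $\|\MP^{out}Q_R\|_{L^2}^2\lesssim\ve\,\CH_\nn(Q_R){:}Q_R$ then absorbs the out-of-kernel part of $Q_R$ with an $\ve$-gain, while the in-kernel part $\MP^{in}Q_R$ is handled by the linearised Ericksen--Leslie dissipation underlying (\ref{EL-energy-law}).

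The main obstacle is twofold. First, without the assumption $\mu_1\gg J$ used in \cite{LW}, the natural energy fails to dissipate the singular elastic potential: pairing the $Q_R$-equation with $\partial_tQ_R$ yields only the kinetic dissipation $\mu_1|\partial_tQ_R|^2$ and provides no decay mechanism for $\ve^{-1}\CH_\nn^\ve(Q_R){:}Q_R$, and when $\mu_1\sim J$ the damping is insufficient to control the cross contribution of the inertial term $\ve^m J\partial_t^2Q_R$. The remedy, which is the essential technical novelty, is to augment the natural energy by an additional cross term proportional to $\ve^m J\langle Q_R,\partial_tQ_R\rangle$ (together with its $\partial_i$- and $\Delta$-weighted analogues); differentiating and using the $Q_R$-equation, this cross term generates a coercive $\ve^{-1}\CH_\nn^\ve(Q_R){:}Q_R$ dissipation at the cost of a small positive kinetic contribution that is absorbed by the main damping $\mu_1|\partial_tQ_R|^2$, yielding hypocoercive dissipation of the singular potential independent of the ratio $\mu_1{:}J$. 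Second, the $\Delta$-level estimate naively loses a derivative in the cubic distortion-stress $\nabla\cdot\sigma^d(Q_R,Q_0)$ coupling to the velocity equation; this is resolved by exploiting the antisymmetric structure of the $(Q_R,\vv_R)$-pairing, so that the top-order part of $\sigma^d$ cancels against the $\Delta$-differentiated elastic term after integration by parts, mirroring the cancellation that produces the energy identity (\ref{EL-energy-law}). With these two refinements, the estimate closes as a differential inequality of the form $\frac{\ud}{\ud t}\widetilde{\fE}_m\le C(1+\widetilde{\fE}_m)+C\ve^{1/2}\widetilde{\fE}_m^{3/2}$, which Gronwall propagates for $\ve<\ve_0$, giving the uniform bound $\widetilde{\fE}_m(t)\le E_1$ on $[0,T]$ and hence the theorem.
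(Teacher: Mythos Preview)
Your high-level structure is sound: Hilbert expansion, profile construction via Propositions~\ref{prop Q(1)} and~\ref{nu:prop Q(1)}, a remainder estimate, and a continuity argument. However, you have misidentified the central technical obstacle, and your proposed hypocoercive fix does not close the estimate.

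The genuine difficulty is not a lack of decay for the singular potential $\ve^{-1}\CH_\nn^\ve(Q_R){:}Q_R$; this quantity lives in the energy and does not need to be dissipated. The problem is that $\CH_\nn$ depends on $t$ through $\nn(t,\xx)$, so differentiating $\ve^{-1}\langle\CH_\nn(Q_R),Q_R\rangle$ produces a commutator $\ve^{-1}\langle[\partial_t+\tilde{\vv}\cdot\nabla,\CH_\nn]Q_R,Q_R\rangle$. Splitting $Q_R=Q_R^\top+Q_R^\perp$ along $\ker\CH_\nn$, the $\perp$--$\perp$ part is bounded by $\ve^{-1}\|Q_R^\perp\|_{L^2}^2\le C_0^{-1}\ve^{-1}\langle\CH_\nn(Q_R),Q_R\rangle\le C\fE_m$, but there remains a singular $\top$--$\perp$ cross piece $\ve^{-1}\langle\A(Q_R^\top),Q_R^\perp\rangle$ with $\A(Q)=2bs\,\dot{\overline{\nn\nn}}\cdot Q-2cs^2(\dot{\overline{\nn\nn}}{:}Q)(\nn\nn-\tfrac13\II)$. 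Your cross term $\ve^m J\langle Q_R,\partial_tQ_R\rangle$ does, after substituting the equation, generate dissipation $-\ve^{-1}\langle\CH_\nn(Q_R),Q_R\rangle\le -C_0\ve^{-1}\|Q_R^\perp\|_{L^2}^2$; but any Young-inequality split of $\ve^{-1}\|Q_R^\top\|_{L^2}\|Q_R^\perp\|_{L^2}$ leaves a residual $\ve^{-1}\|Q_R^\top\|_{L^2}^2$, which is singular and controlled by nothing in your functional. The hypocoercive device therefore fails precisely on the kernel component, and this is not repaired by enlarging the coefficient of the cross term (which would destroy positivity of the kinetic part).

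The paper's remedy is structurally different and is the content of Lemma~\ref{A key lemma}: one rewrites the cross piece as $\ve^{-1}\langle\CH_\nn^{-1}\A(Q_R^\top),\CH_\nn(Q_R)\rangle$ via Proposition~\ref{linearized-oper-prop}(iii), and then substitutes for $\ve^{-1}\CH_\nn(Q_R)$ using the remainder equation (\ref{eq-QR}) itself. This converts the singular term into $-\tfrac{\ud}{\ud t}\CA(Q_R,\PP)$ plus order-one remainders, where $\CA(Q_R,\PP)=\ve^m J\langle\CH_\nn^{-1}\A(Q_R^\top),\PP\rangle$ is bounded by $C\|Q_R\|_{L^2}\|\PP\|_{L^2}$. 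The extra $\tfrac{M}{2}\|Q_R\|_{L^2}^2$ in $\fE_m$ is there solely to keep $\fE_m+\ve^m\CA$ nonnegative, not to generate dissipation. Note also that the paper pairs the $Q$-equation with $\PP=(\partial_t+\tilde{\vv}\cdot\nabla)Q_R+\vv_R\cdot\nabla Q^\ve$ rather than $\partial_tQ_R$; this choice is what produces the clean symmetric cancellations between the $\mu_1,\mu_2$ couplings in (\ref{eq-QR}) and (\ref{eq-vR}) without extra commutators.
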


\begin{remark}
We explain the meaning of $\chi(m)$. If $m=0$, then it means that we only consider the uniaxial limit that the small elastic coefficients tend to zero. The limit system, the equations obtained by the Hilbert expansion, and the remainder system are all hyperbolic. If $m\in\mathbb{Z}^+$, then it implies that we are concerned with the uniaxial limit that two small parameters (the elastic coefficients and the inertial coefficient) approach to zero simultaneously. This will bring about essential differences: except that the remainder system is hyperbolic, the limit model and the equations obtained by the Hilbert expansion are all parabolic. Therefore, Theorem \ref{main theorem} contains the results under two different cases.
\end{remark}

\begin{remark}
When $m\in\mathbb{Z}^+$, i.e., $\chi(m)=0$, the condition {\em(\ref{initial-regularity})} can be replaced by
    \begin{align*}
        (\vv,\nabla \nn)\in C([0, T];H^{\ell}),\quad {\ell\ge 20,}
    \end{align*}
since $(\vv,\nn)$ is a smooth solution of a parabolic system, i.e., the noninertial Ericksen--Leslie model {\em (\ref{EL-vv})--(\ref{EL-div})} where $I=0$.
\end{remark}

\begin{remark}
    If the damping coefficient $\mu_1$ and the viscosity coefficient $\mu_2$ satisfy $\mu_1=\mu_2=0$ when $m=0$, the same results as Theorem \ref{main theorem} still hold.
\end{remark}

To illustrate the idea of the proof Theorem \ref{main theorem}, we provide a short overview. To begin with, we make the Hilbert expansion of the solution $(Q^{\ve},\vv^\ve)$
with respect to the small parameter $\ve$:
\begin{align*}
    Q^{\ve}(t,\xx)=&Q_0(t,\xx)+\ve Q_1(t,\xx)+\ve^2 Q_2(t,\xx)+\ve^3 Q_3(t,\xx) +\ve^3 Q_R(t,\xx),\\
    \vv^{\ve}(t,\xx)=&\vv_0(t,\xx)+\ve\vv_1(t,\xx)+\ve^2\vv_2(t,\xx)+\ve^3\vv_R(t,\xx).
\end{align*}
Substituting the above expansions into the system (\ref{eq:Q})--(\ref{eq:free div}), we obtain a series of equations for $(Q_k,\vv_k;Q_3)(0\leq k\leq2)$(see Section \ref{Hilbert-expansion}). The O($\ve^{-1}$) equation gives $\CJ(Q_0)=0$, which implies from Proposition \ref{critical-points} that $Q_0=s(\nn\nn-\frac{1}{3}\II)$ is the uniaxial minimizer of the bulk energy $f_b$,
for some $\nn\in\BS$ and $s=\frac{b+\sqrt{b^2+24ac}}{4c}$.
The $O(1)$ system gives the uniaxial vectorial hydrodynamics. The uniaxial limit is the full inertial Ericksen--Leslie system for $m=0$, while the uniaxial limit is the noninertial version of the Ericksen--Leslie system for $m\in\mathbb{Z}^+$, see Proposition \ref{0toEL} and Proposition \ref{0toEL-m>0}, respectively.

The proof of Theorem \ref{main theorem} is based on mainly two ingredients: the existence of smooth solutions to the equations of $(Q_k,\vv_k;Q_3)(0\leq k\leq 2)$, and the uniform estimate for the remainder system.

The first ingredient relies on the local existence of smooth solution to the Ericksen--Leslie system on $[0,T]$, which has been established in \cite{WZZ1,WW} for the noninertial case and in \cite{JL2} for the inertial case. The basic approach to solve $(Q_1,\vv_1)$ is to cancel the non-leading term in the $O(\ve)$ equations by the kernel space $\text{Ker}\CH_{\nn}$, and to derive a linear system and then to show that such a system has a closed energy estimate. Thus, the existence of the smooth solutions $(Q_k,\vv_k;Q_3)(0\leq k\leq 2)$ can be guaranteed (see Proposition \ref{prop Q(1)} for the case of $m=0$ and Proposition \ref{nu:prop Q(1)} for the case of $m\in\mathbb{Z}^+$).

The second ingredient is to prove the uniform boundedness of the remainders $(Q_R,\PP,\vv_R)$. The remainder system has the following abstract form:
\begin{align*}
     &\ve^m J(\partial_t+\vv^\ve\cdot\nabla)\PP=-\frac{1}{\ve}\CH_\nn^\ve(Q_R)-\frac{\mu_2}{2}\DD_R-\mu_1(\PP-[\BOm_R,Q_0])+\cdots,\\
   & \partial_t{\vv}_R=-\nabla p_R+\nabla\cdot \Big\{\beta_4\DD_R
    +\frac{\mu_2}{2}(\PP-[\BOm_R,Q_0])+\mu_1\big[Q_0,(\PP-[\BOm_R,Q_0])\big]\Big\}+\cdots,
\end{align*}
where $\PP=(\partial_t+\tilde{\vv}\cdot\nabla)Q_R+\vv_R\cdot\nabla Q^{\ve}$ and $\tilde{\vv}=\sum^2_{k=0}\ve^k\vv_k$, and $m$ is a given nonnegative integer. It is not difficult to observe that $\PP$ is not fully expanded in the remainder system. The greatest benefit is that some higher-order derivative terms can be eliminated by the symmetric structure of the system, without having to deal with some tedious higher-order estimates as shown in \cite{LW}.

The main obstacle towards the uniform estimate comes from the singular term $\frac{1}{\ve}\CH^{\ve}_\nn(Q_R)$.
Since the remainder system is a hyperbolic system, the term
$\frac{1}{\ve}\left\langle\CH_\nn^\ve(Q_R),\PP\right\rangle$ will be brought into the energy. However, the linearized operator $\CH^{\ve}_{\nn}$ depends on the time $t$, and its time derivative will cause further the difficult terms contained in the energy. To handle these difficulties, a more delicate modified energy is introduced in the previous work \cite{LW}, but a rather strong condition of coefficients $\mu_1\gg J$ is imposed. Therefore, a new difficulty of this article is how to derive the uniform estimate for the remainder system when the strong assumption of coefficients is removed.

For this purpose, we introduce a suitable energy functional $\fE(t)$ in (\ref{energyE}), which includes a key additional term $\frac{1}{2}\int_{\mathbb{R}^3}M|Q_R|^2\ud\xx$ with $M$ a adjustable and sufficiently large constant. To control the singular term $\frac{1}{\ve}\left\langle\CH_\nn^\ve(Q_R),\PP\right\rangle$, the key estimate is given by
\begin{align*}
    -\frac{1}{\ve}\left\langle\CH_\nn^\ve(Q_R),\PP\right\rangle\leq\frac{\ud}{\ud t}\Big(-\frac{1}{2\ve}\langle\CH_\nn^\ve(Q_R),Q_R\rangle-\ve^m\CA(Q_R,\PP)\Big)+\cdots,
\end{align*}
where $\CA(Q_R,\PP)$ is defined by (\ref{CA-t}). The elimination of the singular terms relies also on the structure of the remainder system. We may refer to Lemma \ref{A key lemma} for $m=0$ and Lemma \ref{nu:A key lemma} for $m\in\mathbb{Z}^+$. In order to ensure the positive definiteness of the energy, we choose the suitable positive constant $M$ such that $\ve^m|\CA(Q_R,\PP)|\leq\frac{1}{2}\fE(t)$. Then we obtain the energy estimates of the remainder terms (see Proposition \ref{proposition-PP} and Proposition \ref{nu:proposition-PP}, respectively).

At last, we show that ${\fE}_{m}(t)$ defined by $(\ref{energyE})$ and $\widetilde{{\fE}}_{m}(t)$ defined by $(\ref{definition of widetile(fE)})$ are equivalent, and thus complete the proof of Theorem \ref{main theorem}.

\section{Critical points and the linearized operator}\label{crit-liner}

To study the limit $\ve\rightarrow 0$, we need to characterize the minimizer of the bulk energy $f_b$.
 A tensor $Q^{\ast}$ is called a critical point of $f_b(Q)$ if $\CJ(Q^{\ast}):=\frac{\partial f_b}{\partial Q}\big|_{Q=Q^{\ast}}=0$, where $\CJ(Q)$ is defined by (\ref{CJ-LP}). The characterization of critical points \cite{Maj, WZZ3} is given below.

\begin{proposition}\label{critical-points}
$\CJ(Q^{\ast})=0$ if and only if $Q^{\ast}=s(\nn\nn-\frac13\II)$ for some $\nn\in\mathbb{S}^2$,  where $s=0$ or a solution of {$2cs^2-bs-3a=0$},  that is,
\begin{align*}
s_{1}=\frac{b+\sqrt{b^2+24ac}}{4c} ~or~ s_{2}=\frac{b-\sqrt{b^2+24ac}}{4c}.
\end{align*}
Moreover,  the critical point $Q^{\ast}=s(\nn\nn-\frac13\II)$ is stable if $s=s_1$.
\end{proposition}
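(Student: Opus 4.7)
The plan is to reduce the tensor equation $\CJ(Q^{\ast})=0$ to a scalar equation on eigenvalues by diagonalizing $Q^{\ast}$, deduce that any nonzero critical point must be uniaxial, and then isolate the stable branch by analyzing the Hessian of $f_b$ on a well-chosen orthogonal decomposition of $\mathbb{S}^3_0$.

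First I would write $Q^{\ast}=R\,\mathrm{diag}(\lambda_1,\lambda_2,\lambda_3)\,R^T$ in an orthonormal eigenframe, with $\lambda_1+\lambda_2+\lambda_3=0$ by tracelessness. Since $\CJ(Q)=-aQ-bQ^2+c|Q|^2Q+\tfrac{b}{3}|Q|^2\II$ is polynomial in $Q$ up to a multiple of $\II$, the tensor $\CJ(Q^{\ast})$ is diagonal in the same frame, and $\CJ(Q^{\ast})=0$ collapses to the single quadratic
\[
b\lambda_i^2+(a-c|Q^{\ast}|^2)\lambda_i-\tfrac{b}{3}|Q^{\ast}|^2=0,\qquad i=1,2,3,
\]
whose coefficients do not depend on $i$. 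A quadratic has at most two roots, so at least two of $\lambda_1,\lambda_2,\lambda_3$ must coincide. Combined with $\sum_i\lambda_i=0$, this forces either the triple root $\lambda_1=\lambda_2=\lambda_3=0$ (hence $Q^{\ast}=0$), or, after relabelling, $\lambda_2=\lambda_3=-\lambda_1/2$, which with $s=\tfrac32\lambda_1$ and $\nn$ the unit eigenvector of $\lambda_1$ is exactly $Q^{\ast}=s(\nn\nn-\tfrac13\II)$. For the converse, substituting this ansatz back and using $|Q^{\ast}|^2=\tfrac{2s^2}{3}$ and $\mathrm{Tr}(Q^{\ast 3})=\tfrac{2s^3}{9}$ reduces $\CJ(Q^{\ast})=0$ to the scalar equation $\tfrac{2s}{9}(2cs^2-bs-3a)=0$, giving $s\in\{0,s_1,s_2\}$ and completing the characterization.

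The delicate step is stability at $s=s_1$: the main obstacle is that positivity of $\CH_\nn$ must be verified against the full five-dimensional tangent space $\mathbb{S}^3_0$, not only along the one-parameter uniaxial family. I would fix $\nn$, pick an orthonormal basis $\{\nn,\mathbf{m}_1,\mathbf{m}_2\}$ of $\mathbb{R}^3$, and decompose a traceless symmetric perturbation $\delta Q$ at $Q^{\ast}=s_1(\nn\nn-\tfrac13\II)$ into three mutually orthogonal $\CH_\nn$-invariant blocks: \emph{(i)} the one-dimensional uniaxial block spanned by $\nn\nn-\tfrac13\II$, whose Hessian eigenvalue equals $\tfrac{d^2}{ds^2}f_b(Q^{\ast})\big|_{s_1}$; starting from $f_b(Q^{\ast})=-\tfrac{as^2}{3}-\tfrac{2bs^3}{27}+\tfrac{cs^4}{9}$ and using the relation $2cs_1^2=bs_1+3a$ to eliminate $s_1^2$, this reduces to $\tfrac{4a}{3}+\tfrac{2bs_1}{9}>0$; \emph{(ii)} the two-dimensional rotational block spanned by $\nn\mathbf{m}_j+\mathbf{m}_j\nn$ ($j=1,2$), which lies in $\ker\CH_\nn$ by $SO(3)$-invariance of $f_b$ (these are the infinitesimal director rotations $[A,Q^{\ast}]$ with $A$ skew); and \emph{(iii)} the two-dimensional biaxial block supported on $\nn^\perp\otimes\nn^\perp$, whose Hessian eigenvalue I would compute in closed form and verify is strictly positive at $s=s_1$ (but negative at $s=s_2$, so $s_2$ is a saddle). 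Combining the three blocks yields the required positive semidefiniteness of $\CH_\nn$ at $Q^{\ast}=s_1(\nn\nn-\tfrac13\II)$, with the degeneracy confined to the rotational kernel; a direct algebraic comparison using the quadratic relation finally shows $f_b(Q^{\ast})|_{s_1}<f_b(Q^{\ast})|_{s_2}$, so among the nonzero critical points only $s_1$ is stable.
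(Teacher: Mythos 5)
The paper does not prove this proposition itself; it cites \cite{Maj, WZZ3}, and your argument is precisely the standard one found there: diagonalize $Q^{\ast}$, observe that every eigenvalue satisfies the same quadratic $b\lambda^2+(a-c|Q^{\ast}|^2)\lambda-\tfrac{b}{3}|Q^{\ast}|^2=0$, invoke pigeonhole plus tracelessness to force $Q^{\ast}$ isotropic or uniaxial, and then block-diagonalize $\CH_\nn$ into the uniaxial direction, the rotational kernel $\{\nn\mathbf m+\mathbf m\nn\}$, and the biaxial block (on which $\CH_\nn$ acts as multiplication by $\tfrac{s}{3}(4cs-b)$, $0$, and $bs$, respectively). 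Your sketch is correct; two small slips worth flagging are that the Hessian eigenvalue on $\mathrm{span}\{\nn\nn-\tfrac13\II\}$ equals $\tfrac32\phi''(s_1)$, not $\phi''(s_1)$, since $|\nn\nn-\tfrac13\II|^2=\tfrac23$, and that the pigeonhole step silently requires $b\neq0$ (for $b=0$ the quadratic is linear and the set of critical points is genuinely larger) --- neither affects the conclusion under the paper's standing hypotheses, and the block computation of $\CH_\nn$ gives the instability of $s_2$ directly, so the closing energy comparison $f_b|_{s_1}<f_b|_{s_2}$ is a bonus (global minimality) rather than a necessary step.
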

When the bulk energy $f_b$ is a fourth degree polynomial, it can be seen from Proposition \ref{critical-points} that the critical points of $f_b$ can only be isotropic or uniaxial. In this article, we only consider that the global minimizer of the bulk energy is uniaxial.

For a given critical point $Q^{\ast}=s(\nn\nn-\frac{1}{3}\II)$,  the linearized operator $\CH_{Q^{\ast}}$ of $\CJ(Q)$ around $Q^{\ast}$ can be defined as
\begin{align*}
\CH_{Q^{\ast}}(Q)=-aQ-b(Q^{\ast}\cdot Q+Q\cdot Q^{\ast})+c|Q^{\ast}|^2Q+2(Q^{\ast}:Q)\Big(cQ^{\ast}+\frac{b}{3}\II\Big).
\end{align*}
In other words, the linearized operator $\CH_{Q^{\ast}}$ is the Hessian of the bulk energy $f_b$ at $Q^{\ast}$.
For any $Q_i\in\mathbb{R}^{3\times3}(i=1, 2, 3)$, we define
\begin{align*}
\CB(Q_1, Q_2)\eqdefa&~Q_1\cdot Q_2+Q^T_2\cdot Q^T_1-\frac{1}{3}(Q_1:Q_2)\II, \\
\CC(Q_1, Q_2, Q_3)\eqdefa&~ Q_1(Q_2:Q_3)+Q_2(Q_1:Q_3)+Q_3(Q_1:Q_2).
\end{align*}
Then two operators $\CJ(Q)$ and $\CH_{Q^{\ast}}(Q)$ can be expressed as,  respectively,
\begin{align}
\CJ(Q)=&-aQ-\frac{b}{2}\CB(Q, Q)+\frac{c}{3}\CC(Q, Q, Q), \label{CJ=abc}\\
\CH_{Q^{\ast}}(Q)=&-aQ-b\CB(Q, Q^{\ast})+c\CC(Q, Q^{\ast}, Q^{\ast}).\label{CH=abc}
\end{align}
Since $Q^{\ast}=Q^{\ast}(\nn)$ can be viewed as a function of $\nn$, a direct calculation yields
\begin{align}\label{CH-nn}
\CH_{Q^{\ast}}(Q)=&~bs\Big(Q-(\nn\nn\cdot Q+Q\cdot\nn\nn)+\frac23(Q:\nn\nn)\II\Big)+2cs^2(Q:\nn\nn)\Big(\nn\nn-\frac13\II\Big)\nonumber\\
\eqdefa&~\CH_{\nn}(Q).
\end{align}

The kernel space of the linearized operator $\CH_{\nn}$,  being a two-dimensional subspace of $\mathbb{S}^3_0$,  can be defined by
\begin{align*}
{\rm Ker}\CH_{\nn}\eqdefa\{\nn\nn^{\perp}+\nn^{\perp}\nn\in\mathbb{S}^3_0: \nn^{\perp}\in\mathbb{V}_{\nn}\},
\end{align*}
for any given $\nn\in\mathbb{S}^2$,  where $\mathbb{V}_{\nn}\eqdefa\{\nn^{\perp}\in\BR: \nn^{\perp}\cdot\nn=0\}$. Let $\mathscr{P}^{in}$ be the projection operator from $\mathbb{S}^3_0$ to
$\text{Ker}\CH_{\nn}$ and $\MP^{out}$ the projection operator from $\mathbb{S}^3_0$ to
$(\text{Ker}\CH_{\nn})^{\perp}$, respectively.
Two projection operators are given by (see \cite{WZZ3} for details)
\begin{align}
 \MP^{in}(Q)
 =&~(\nn\nn\cdot Q+Q\cdot\nn\nn)-2(Q:\nn\nn)\nn\nn, \label{projection_in1}\\
 \MP^{out}(Q)
 =&~Q-(\nn\nn\cdot Q+Q\cdot\nn\nn)+2(Q:\nn\nn)\nn\nn.\label{projection_out1}
\end{align}

The properties of the linearized operator $\CH_{\nn}$ will play a key role in the analysis of the Hilbert expansion, which can be found in \cite{WZZ3}.
\begin{proposition}\label{linearized-oper-prop}
{\rm(i)} For any $\nn\in\BS$,  it follows that $\CH_{\nn}\mathrm{Ker}\CH_{\nn}=0$,  i.e.,  $\CH_{\nn}(Q)\in(\mathrm{Ker}\CH_{\nn})^{\perp}$.\\
{\rm(ii)} There exists a constant $C_0=C_0(a, b, c)>0$ such that for any $Q\in(\mathrm{Ker}\CH_{\nn})^{\perp}$,
\begin{align*}
\CH_{\nn}(Q):Q\geq C_0|Q|^2.
\end{align*}
{\rm(iii)} $\CH_{\nn}$ is a 1-1 map on $(\mathrm{Ker}\CH_{\nn})^{\perp}$ and its inverse $\CH^{-1}_{\nn}$ is given by
\begin{align}\label{HM-inverse}
\CH^{-1}_{\nn}(Q)=&\frac{1}{bs}\Big(Q-(\nn\nn\cdot Q+Q\cdot\nn\nn)+\frac23(Q:\nn\nn)\II\Big)\nonumber\\
&+\frac{4b+2cs}{bs(4cs-b)}(Q:\nn\nn)\Big(\nn\nn-\frac13\II\Big).
\end{align}
\end{proposition}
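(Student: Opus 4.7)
The plan is to establish all three claims simultaneously by constructing an explicit orthonormal eigenbasis of $\CH_\nn$ on $\mathbb{S}^3_0$ adapted to the direction $\nn$. Fix an orthonormal frame $\{\nn,\ee_1,\ee_2\}$ of $\BR$ and introduce the five tensors
\begin{align*}
E_1 &= \sqrt{\tfrac{3}{2}}\Big(\nn\nn-\tfrac{1}{3}\II\Big),\quad E_2=\tfrac{1}{\sqrt{2}}(\ee_1\ee_1-\ee_2\ee_2),\quad E_3=\tfrac{1}{\sqrt{2}}(\ee_1\ee_2+\ee_2\ee_1),\\
E_4 &=\tfrac{1}{\sqrt{2}}(\nn\ee_1+\ee_1\nn),\quad E_5=\tfrac{1}{\sqrt{2}}(\nn\ee_2+\ee_2\nn).
\end{align*}
A direct verification shows that $\{E_1,\ldots,E_5\}$ is orthonormal in $\mathbb{S}^3_0$ under the inner product $A:B$, and that $E_4,E_5$ are exactly of the form $\nn\nn^\perp+\nn^\perp\nn$, hence span the two-dimensional subspace $\mathrm{Ker}\CH_\nn$ introduced just before the proposition.

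For (i), I would first plug $Q=E_4$ (and $Q=E_5$) into the closed formula (\ref{CH-nn}). Using $E_j:\nn\nn=0$ together with the identity $\nn\nn\cdot E_j+E_j\cdot\nn\nn=E_j$ for $j=4,5$, every term of (\ref{CH-nn}) cancels and $\CH_\nn(E_j)=0$, confirming the explicit description of $\mathrm{Ker}\CH_\nn$. For the image containment $\CH_\nn(Q)\in(\mathrm{Ker}\CH_\nn)^\perp$, I would invoke the self-adjointness of $\CH_\nn$ with respect to the pairing $A:B$, which is manifest from (\ref{CH=abc}) since $\CH_{Q^\ast}$ is the Hessian of the scalar function $f_b$ at $Q^\ast$; then for any $P\in\mathrm{Ker}\CH_\nn$, $\CH_\nn(Q):P=Q:\CH_\nn(P)=0$.

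For (ii), I would compute the action of $\CH_\nn$ on the three remaining basis vectors using (\ref{CH-nn}). A short calculation gives
\[\CH_\nn(E_1)=\tfrac{s(4cs-b)}{3}E_1,\qquad \CH_\nn(E_2)=bs\,E_2,\qquad \CH_\nn(E_3)=bs\,E_3,\]
where for $E_1$ one uses $E_1:\nn\nn=\sqrt{2/3}$ and $\nn\nn\cdot E_1+E_1\cdot\nn\nn=\tfrac{2\sqrt{6}}{3}\nn\nn$, while for $E_2,E_3$ both $\nn\nn\cdot E_j$ and $E_j:\nn\nn$ vanish because $\ee_1,\ee_2\perp\nn$. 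Since $s=s_1>0$ and $4cs-b=\sqrt{b^2+24ac}>0$ by Proposition \ref{critical-points}, all three eigenvalues are strictly positive, and setting $C_0\eqdefa\min\bigl\{bs,\,s(4cs-b)/3\bigr\}>0$ yields $\CH_\nn(Q):Q\ge C_0|Q|^2$ for every $Q=\sum_{j=1}^3 q_jE_j\in(\mathrm{Ker}\CH_\nn)^\perp$.

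For (iii), positivity of the three nonzero eigenvalues immediately gives that $\CH_\nn$ restricts to a bijection on $(\mathrm{Ker}\CH_\nn)^\perp$. To verify the explicit inverse (\ref{HM-inverse}), I would evaluate its right-hand side on each basis vector $E_1,E_2,E_3$ and compare with $\tfrac{1}{bs}E_2,\tfrac{1}{bs}E_3,\tfrac{3}{s(4cs-b)}E_1$ respectively. For $E_2,E_3$ the second term of (\ref{HM-inverse}) vanishes (since $E_j:\nn\nn=0$) and the first term reduces to $\tfrac{1}{bs}E_j$, as required. For $E_1$, the $\tfrac{1}{bs}$ piece contributes $-\tfrac{1}{3bs}E_1$ after collecting terms, and the $(Q:\nn\nn)$ piece contributes $\tfrac{2(4b+2cs)}{3bs(4cs-b)}E_1$, whose sum simplifies to $\tfrac{3}{s(4cs-b)}E_1$. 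The main algebraic obstacle is precisely this coefficient match: the factor $\tfrac{4b+2cs}{bs(4cs-b)}$ in (\ref{HM-inverse}) is chosen exactly to compensate the negative contribution $-\tfrac{1}{3bs}$ from the first bracket so that the composite operator returns $E_1$ with the correct scaling. Once this single identity $-(4cs-b)+2(4b+2cs)=9b$ is checked, the three claims follow from the spectral decomposition set up in the first paragraph.
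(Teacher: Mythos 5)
Your proof is correct and self-contained: the paper itself does not prove Proposition \ref{linearized-oper-prop} but defers to the reference [WZZ3], so there is no in-paper argument to compare against. Your route via the explicit orthonormal eigenbasis $\{E_1,\ldots,E_5\}$ of $\mathbb{S}^3_0$ adapted to $\nn$ is the standard way to establish all three assertions at once, and I checked the key computations: $\CH_\nn(E_4)=\CH_\nn(E_5)=0$ because $\nn\nn\cdot E_j+E_j\cdot\nn\nn=E_j$ and $E_j:\nn\nn=0$; $\CH_\nn(E_2)=bs\,E_2$ and $\CH_\nn(E_3)=bs\,E_3$ because both $\nn\nn\cdot E_j$ and $E_j:\nn\nn$ vanish; and $\CH_\nn(E_1)=\tfrac{s(4cs-b)}{3}E_1$ after using $\nn\nn-\tfrac13\II=\sqrt{2/3}\,E_1$ and cancelling the $\II$ contributions. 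The positivity $bs>0$ and $4cs-b=\sqrt{b^2+24ac}>0$ at $s=s_1$ is correct, the self-adjointness of $\CH_\nn$ (being the Hessian of $f_b$) properly gives $\CH_\nn(Q)\in(\mathrm{Ker}\CH_\nn)^\perp$, and the final algebraic identity $-(4cs-b)+2(4b+2cs)=9b$ that reconciles the two pieces of \eqref{HM-inverse} with the eigenvalue $\tfrac{3}{s(4cs-b)}$ checks out.
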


\section{The Hilbert expansion and the remainder system}\label{Hilbert-expansion}

This section focuses on performing the Hilbert expansion (also called the Chapman--Enskog expansion) of solutions with respect to the small parameter $\ve$, and deriving the system of the remainder terms.

\subsection{The Hilbert expansion}

Let ($Q^\ve,\vv^\ve$) be a solution of the system (\ref{eq:Q})--(\ref{eq:free div}). We make the following Hilbert expansion:
\begin{align}
    &Q^\ve=\sum^3_{k=0}\ve^kQ_k+\ve^3 Q_R\eqdefa \widetilde{Q}+\ve^3Q_R,\label{Q-ve}\\
    &\vv^\ve=\sum^2_{k=0}\ve^k\vv_k+\ve^3\vv_R\eqdefa \tilde{\vv}+\ve^3\vv_R,\label{v-ve}
\end{align}
where $Q_k(0\leq k\leq 3)$ and $\vv_k(0\leq k\leq2)$ are independent of $\ve$, and $(Q_R,\vv_R)$ represents the remainder term depending upon $\ve$. Armed with the expansions (\ref{Q-ve})--(\ref{v-ve}), we have
\begin{align}\label{dotQ-expansion}
 \dot{Q}^{\ve}=
        &(\partial_t+\vv^\ve\cdot\nabla)Q^\ve\nonumber\\=&\DP_0+\ve\DP_1+\ve^2(\DP_2+\vv_1\cdot \nabla Q_1)+\ve^3\bigg(\PP+\partial_tQ_3+\sum_{i+j\ge 3}\ve^{i+j-3}\vv_i\cdot\nabla Q_j\bigg),
\end{align}
where $\DP_k(0\leq k\leq2)$, independent of $\ve$, are given by, respectively,
\begin{align*}
    \DP_0=&(\partial_t+\vv_0\cdot\nabla)Q_0,\quad\DP_1=(\partial_t+\vv_0\cdot\nabla)Q_1+\vv_1\cdot\nabla Q_0,\\
    \DP_2=&(\partial_t+\vv_0\cdot\nabla)Q_2+\vv_2\cdot \nabla Q_0,
\end{align*}
and the remainder term $\PP$, depending on $\ve$, is expressed by
\begin{align*}
\PP=(\partial_t+\vv^\ve\cdot\nabla)Q_R+\vv_R\cdot\nabla{(Q^\ve-\ve^3Q_R)}.
\end{align*}
It is worth emphasizing that to maintain the intrinsic structure of the remainder system, the term $\PP$ in (\ref{dotQ-expansion}) is not fully expanded. This is because some terms related with $\PP$ can be cancelled in the estimates of the remainder term.

By the Taylor expansion and (\ref{Q-ve}), we obtain
\begin{align}\label{CJ-taylor}
\CJ(Q^{\ve})=&\CJ(Q_0)+\ve\CH_{\nn}(Q_1)+\ve^2(\CH_{\nn}(Q_2)+\JJ_1)+\ve^3(\CH_{\nn}(Q_3)+\JJ_2)\nonumber\\
&+\ve^3\CH_{\nn}(Q_R)+\ve^4\CJ_R^{\ve},
\end{align}
where $\JJ_1$ and $\JJ_2$ are given by, respectively,
\begin{align*}
\JJ_1=&-\frac{b}{2}\CB(Q_1,Q_1)+c\CC(Q_0,Q_1,Q_1),\\
\JJ_2=&-b\CB(Q_1,Q_2)+2c\CC(Q_0,Q_1,Q_2)+\frac{c}{3}\CC(Q_1,Q_1,Q_1).
\end{align*}
The fourth-order term $\CJ^{\ve}_R$ is given by
\begin{align}\label{CJ-veR}
\CJ^{\ve}_R=&\JJ^{\ve}-b\CB(\widehat{Q}^{\ve},Q_R)+2c\CC(Q_R,\widehat{Q}^{\ve},Q_0)+c\ve\CC(Q_R,\widehat{Q}^{\ve},\widehat{Q}^{\ve})\nonumber\\
&-\frac{b}{2}\ve^2\CB(Q_R,Q_R)+c\ve^2\CC(Q_R,Q_R,Q_0+\ve\widehat{Q}^{\ve})+\frac{c}{3}\ve^5\CC(Q_R,Q_R,Q_R),
\end{align}
where $\widehat{Q}^{\ve}=Q_1+\ve Q_2+\ve^2Q_3$, and $\JJ^{\ve}$ is expressed by
\begin{align*}
\JJ^{\ve}=&-\frac{b}{2}\sum\limits_{\mbox{\tiny$\begin{array}{c}
1\leq i,j\leq 3\\
i+j\geq4\end{array}$}}\ve^{i+j-4}\CB(Q_i,Q_j)\\
&+\frac{c}{3}\sum_{\mbox{\tiny$\begin{array}{c}
i+j+k\geq4\\
\text{at least two of}~i,j,k~\text{are not zero}\end{array}$}}
\ve^{i+j+k-4}\CC(Q_i,Q_j,Q_k).
\end{align*}
Armed with (\ref{CJ-taylor}), the molecular field $\HH^{\ve}$ can be expressed as
\begin{align*}
\HH(Q^{\ve})=&-\frac{1}{\ve}\CJ(Q^{\ve})-\CL(Q^{\ve})\\
=&-(\CH_{\nn}(Q_1)+\CL(Q_0))-\ve(\CH_{\nn}(Q_2)+\CL(Q_1))-\ve^2(\CH_{\nn}(Q_3)+\CL(Q_2))\\
&-\ve^2\CH^{\ve}_{\nn}(Q_R)-\ve^3\CJ^{\ve}_R,
\end{align*}
where $\CH^{\ve}_{\nn}(Q_R)=\CH_{\nn}(Q_R)+\ve\CL(Q_R)$ and $\CJ^{\ve}_R$ is defined by (\ref{CJ-veR}).

In this section, we also define a characteristic function $\widetilde{\chi}$ (different from $\chi$ defined in (\ref{chi-fun})) with the domain on $\mathbb{Z}$, i.e.,
\begin{align}
\label{chi-fun-extension}
\widetilde{\chi}(n)=\begin{cases}
        1,\quad n=0,\\
        0, \quad n\in\mathbb{Z}\setminus \{0\},
\end{cases}
\end{align}
where $\widetilde{\chi}|_{\mathbb{N}}=\chi|_{\mathbb{N}}$ with $\mathbb{N}$ a set of natural numbers.
We are now in a position to write down the expansion of the system  (\ref{eq:Q})--(\ref{eq:free div}) with the small parameter $\ve$ and collect the terms (independent of the remainder term $(Q_R,\vv_R)$) with the same order of $\ve$.
Meanwhile, we also derive the remainder system of $(Q_R,\vv_R)$. More specifically, we have the following:

$\bullet$ {\it The $O(\ve^{-1})$ system}:
\begin{align}\label{O(-1)}
    \CJ(Q_0)=0.
\end{align}

$\bullet$ {\it Zeroth order term in $\ve$}:
\begin{align}
\chi(m)J \ddot{Q}_0+\mu_{1} \dot{Q}_{0} & = -\CH_{\nn}(Q_1)-\CL(Q_0)-\frac{\mu_{2}}{2} \mathbf{D}_{0}+\mu_{1}\left[\boldsymbol{\Omega}_{0}, Q_{0}\right], \label{ve 0-Q}\\
(\partial_t+\mathbf{v}_{0} \cdot \nabla )\mathbf{v}_{0} & = -\nabla p_{0}+\nabla \cdot\Big(\beta_{1} Q_{0}(Q_{0}: \mathbf{D}_{0})+\beta_{4} \mathbf{D}_{0}+\beta_{5} \mathbf{D}_{0} \cdot Q_{0}\nonumber \\
&\quad +\beta_{6} Q_{0} \cdot \mathbf{D}_{0}+\beta_{7}\left(\mathbf{D}_{0} \cdot Q_{0}^{2}+Q_{0}^{2} \cdot \mathbf{D}_{0}\right)\nonumber\\
&\quad +\frac{\mu_{2}}{2} \mathcal{N}_{0}+\mu_{1}[Q_0,\CN_0]+\sigma^{d}\left(Q_{0}, Q_{0}\right)\Big), \label{ve 0-v}\\
\nabla \cdot \mathbf{v}_{0} & = 0,\label{ve 0-div}
\end{align}
where $\chi(m)$ is defined by (\ref{chi-fun}), and $\dot{Q}_0, \ddot{Q}_0, \CN_0$ are given by
\begin{align*}
\dot{Q}_0=(\partial_t+\vv_0\cdot\nabla)Q_0,\quad\ddot{Q}_0=(\partial_t+\vv_0\cdot\nabla)\dot{Q}_0,\quad\CN_0=\dot{Q}_0-[\BOm_0,Q_0].
\end{align*}

 $\bullet$ {\it First order term in $\ve$}:
\begin{align}
  \chi(m)J \dot{\DP}_1+\mu_1\DP_1=&-\CH_\nn{(Q_2)}-\CL(Q_{1})-\frac{\mu_2}{2}\DD_{1}+\mu_1[\BOm_1,Q_0]+\TT_1, \label{ve 1-Q}\\
    (\partial_t+\vv_0\cdot\nabla){\mathbf{v}}_{1}  =& -\mathbf{v}_{1} \cdot \nabla \mathbf{v}_{0}-\nabla p_{1}+\nabla \cdot\Big(\beta _ { 1 } \big(Q_{0}\left(Q_{0}: \mathbf{D}_{1}\right) \nonumber\\
& +Q_{0}\left(Q_{1}: \mathbf{D}_{0}\right)+Q_{1}(Q_{0}: \mathbf{D}_{0})\big)+\beta_{4} \mathbf{D}_{1} \nonumber\\
& +\beta_{5}\left(\mathbf{D}_{0} \cdot Q_{1}+\mathbf{D}_{1} \cdot Q_{0}\right)+\beta_{6}\left(Q_{0} \cdot \mathbf{D}_{1}+Q_{1} \cdot \mathbf{D}_{0}\right) \nonumber\\
& +\beta_{7}\left(\mathbf{D}_{1} \cdot Q_{0}^{2}+Q_{0}^{2} \cdot \mathbf{D}_{1}+\mathbf{D}_{0} \cdot Q_{1} \cdot Q_{0}+\mathbf{D}_{0} \cdot Q_{0} \cdot Q_{1}\right.\nonumber \\
& \left.+Q_{1} \cdot Q_{0} \cdot \mathbf{D}_{0}+Q_{0} \cdot Q_{1} \cdot \mathbf{D}_{0}\right)+\frac{\mu_{2}}{2} {\mathcal{N}}_{1} \nonumber\\
& +\mu_{1}\left(\left[Q_{1}, \mathcal{N}_{0}\right]+\left[Q_{0}, {\mathcal{N}}_{1}\right]\right)+\sigma^{d}\left(Q_{1}, Q_{0}\right)+\sigma^{d}\left(Q_{0}, Q_{1}\right)\Big), \label{ve 1-v}\\
    \nabla\cdot\vv_1=&~0, \label{ve 1-div}
\end{align}
 where $\dot{Q}_1, \DP_1, \dot{\DP}_1, \CN_1$ and $\TT_1$ are expressed as, respectively,
 \begin{align*}
     \dot{Q}_1=&(\partial_t+\vv_0\cdot\nabla)Q_1,\quad\DP_1=\dot{Q}_1+\vv_1\cdot\nabla Q_0,\\\dot{\DP}_1=&(\partial_t+\vv_0\cdot\nabla)\DP_1,\quad{\CN}_1=\DP_1-[\BOm_1,Q_0]-[\BOm_0,Q_1],\\
     \TT_1=&\mu_1[\BOm_0,Q_1]-\JJ_1-\chi(m)J\vv_1\cdot\nabla\dot{Q}_0-\widetilde{\chi}(m-1)J\ddot{Q}_0.
 \end{align*}
The term related with $\widetilde{\chi}(m-1)$ is retained only if $m=1$, while it disappears in the other cases.

$\bullet$ {\it Second order term in $\ve$}:
\begin{align}
     \chi(m)J \dot{\DP}_2+\mu_1\DP_2=&-\CH_{\nn}{(Q_3)}-\CL(Q_2)-\frac{\mu_2}{2}\DD_2+\mu_1[\BOm_2,Q_0]+\TT_2, \label{ve 2-Q}\\
     \partial_t\vv_2=&~\nabla\cdot\bigg\{\sum_{i+j+k=2}\Big[\beta_1 Q_i(Q_j:Q_k)+\beta_7\left(\DD_i\cdot Q_j\cdot Q_k\right.\nonumber\\
&\left.+Q_i\cdot Q_j\cdot\DD_k\right)\Big]+\sum_{i+j=2}\Big(\beta_5\DD_i\cdot Q_j\nonumber\\&+\beta_6Q_i\cdot \DD_j
+\sigma^d(Q_i, Q_j)+\mu_1[Q_i, {\CN}_j]\Big)\nonumber\\
    &+\beta_4\DD_2+\frac{\mu_2}{2}{\CN}_2\bigg\}-\nabla p_2-\sum_{0\leq i\leq 2}\vv_i\cdot\nabla \vv_{2-i}, \label{ve 2-v}\\
    \nabla\cdot\vv_2=&~0.\label{ve 2-div}
\end{align}
where $\dot{Q}_2, \DP_2, \dot{\DP}_2, \CN_2$ and $\TT_2$ are expressed as, respectively,
\begin{align*}
   \dot{Q}_2=&(\partial_t+\vv_0\cdot\nabla)Q_2,\quad\DP_2=\dot{Q}_2+\vv_2\cdot\nabla Q_0,\quad\dot{\DP}_2=(\partial_t+\vv_0\cdot\nabla)\DP_2,\\
     {\CN}_2=&\DP_2+\vv_1\cdot\nabla Q_1-[\BOm_0,Q_2]-[\BOm_1,Q_1]-[\BOm_2,Q_0],\\
     \TT_2=&\mu_1\big([\BOm_0,Q_2]+[\BOm_1,Q_1]\big)-\JJ_2-\widetilde{\chi}(m-1)J(\dot{\DP}_1+\vv_1\cdot\nabla \dot{Q}_0)\\
     &-\chi(m)J\big((\partial_t+\vv_0\cdot\nabla)(\vv_1\cdot\nabla Q_1)+\vv_1\cdot\nabla\DP_1+\vv_2\cdot\nabla\dot{Q}_0\big)\\&-\widetilde{\chi}(m-2)J\ddot{Q}_0.
\end{align*}
Similarly, the term related with $\widetilde{\chi}(m-2)$ is retained only if $m=2$, while it disappears in the other cases.

\subsection{The remainder system}

We now derive the remainder system. Using the expansions (\ref{Q-ve})--(\ref{v-ve}), we obtain
\begin{align}\label{QR-vvR}
    Q_R=\ve^{-3}(Q^{\ve}-\widetilde{Q}),\quad \vv_R=\ve^{-3}(\vv^{\ve}-\tilde{\vv}),
\end{align}
where $(Q_R,\vv_R)$ depends on the small parameter $\ve$, and $\widetilde{Q}=\sum^3_{k=0}\ve^kQ_k, \tilde{\vv}=\sum^2_{k=0}\ve^k\vv_k$. Combining the system (\ref{eq:Q})--(\ref{eq:free div}) with (\ref{O(-1)})--(\ref{QR-vvR}), we have the following:

$\bullet$ {\it The remainder system in $\ve$}:
\begin{align}
   \ve^m J(\partial_t+\vv^\ve\cdot\nabla)\PP=&-\frac{1}{\ve}\CH^{\ve}_\nn(Q_R)-\frac{\mu_2}{2}\DD_R-\mu_1\left(\PP-[\BOm_R,Q_0]\right)+\FF_R,\label{eq-QR}\\
    \partial_t{\vv}_R=&-\nabla p_R+\nabla\cdot \Big\{\beta_1Q_0(Q_0:\DD_R)+\beta_4\DD_R+\beta_5\DD_R\cdot Q_0\nonumber\\
    &+\beta_6 Q_0\cdot\DD_R+\beta_7(\DD_R\cdot Q_0^2+Q^2_0\cdot \DD_R)\nonumber\\
    &+\frac{\mu_2}{2}(\PP-[\BOm_R,Q_0])+\mu_1\big[Q_0,(\PP-[\BOm_R,Q_0])\big]\Big\}\nonumber\\
    &+\nabla\cdot \GG_R+\GG_R',\label{eq-vR}\\
    \nabla\cdot\vv_R=&~0,\label{eq-vR-div}
\end{align}
where $\PP$ is defined by
\begin{align*}
 \PP=(\partial_t+\tilde{\vv}\cdot\nabla)Q_R+\vv_R\cdot\nabla (\widetilde{Q}+\ve^3Q_R).
\end{align*}

In the equation (\ref{eq-QR}), the term $\FF_R$ is given by
\begin{align}\label{FFR-remaider-term}
    \FF_R=\FF_1+\FF_2+\FF_3,
\end{align}
where $\FF_1$ is independent of $(\vv_R,Q_R)$,
\begin{align*}
\mathbf{F}_{1}  =& -\ve^{m-3} J\bigg(\sum_{i+m\geq 3}\ve^i\partial_{t}^{2} Q_{i}+\sum_{i+j+m\ge 3}\ve^{i+j}\left(2\vv_i\cdot\nabla \partial_t Q_j+\partial_t\vv_i\cdot\nabla Q_j\right)\\&+\sum_{i+j+k+m \geq 3}\varepsilon^{i+j+k} \mathbf{v}_{i} \cdot \nabla\left(\mathbf{v}_{j} \cdot \nabla Q_{k}\right)\bigg)-\JJ^\ve-\CL(Q_{3})\\& -\mu_{1}\bigg(\partial_{t} Q_{3}+\sum_{{i+j\ge 3}}\ve^{i+j-3}\Big(\mathbf{v}_{i} \cdot \nabla Q_{j}-[\BOm_i,Q_j]\Big)\bigg),
\end{align*}
and $\FF_2$ linearly depends on $(\vv_R,Q_R)$,
\begin{align*}
\mathbf{F}_{2}  =&\mu_{1}[\widetilde{\BOm},Q_R]+\left(b \CB(\widehat{Q}^{\varepsilon}, Q_{R})-2c \CC(Q_{R}, \widehat{Q}^{\varepsilon}, Q_{0})-{c} \varepsilon \CC(Q_{R}, \widehat{Q}^{\varepsilon}, \widehat{Q}^{\varepsilon})\right)\\
&-\ve^m J \mathbf{v}_{R} \cdot \nabla\big(\partial_{t} \widetilde{Q}+\tilde{\mathbf{v}} \cdot \nabla \widetilde{Q}\big)+\ve\mu_{1}[\BOm_R,\widehat{Q}^\ve],
\end{align*}
where $\widetilde{\BOm}=\sum^2_{k=0}\ve^k\BOm_k$ and $\widehat{Q}^{\ve}=Q_1+\ve Q_2+\ve^2Q_3$. Again, the term $\FF_3$ nonlinearly depends on $(\vv_R,Q_R)$,
\begin{align*}
\mathbf{F}_3 =&-\left(-\frac{b}{2} \varepsilon^2 \CB(Q_R, Q_R)+c \varepsilon^2 \CC(Q_R, Q_R, \widetilde{Q})+\frac{c}{3} \varepsilon^5 {\CC}\left(Q_R, Q_R, Q_R\right)\right)\\&+\ve^3\mu_1 [\BOm_R,Q_R]
.
\end{align*}

In the equation (\ref{eq-vR}), the term $\GG_R'$ takes the form
\[ \GG_R'=-\vv_1\cdot\nabla\vv_2-\vv_2\cdot\nabla(\vv_1+\ve\vv_2)-\vv_R\cdot\nabla\tilde{\vv}-\tilde{\vv}\cdot\nabla\vv_R-\ve^3\vv_R\cdot\nabla\vv_R.\]
Similarly, $\GG_R$ can be written as
\[\GG_R=\GG_1+\GG_2+\GG_3,\]
where $\GG_1$ is given by
\begin{align*}
\mathbf{G}_{1}= &  \mu_{1}\left(\sum_{i+j \geq 3} \varepsilon^{i+j-3}\left[Q_{i}, \partial_{t} Q_{j}\right]+\sum_{i+j+k \geq 3} \varepsilon^{i+j+k-3}\Big[Q_{i},\left(\mathbf{v}_{j} \cdot \nabla Q_{k}-[\BOm_{j}, Q_{k}]\right)\Big]\right)\\
&+\sum_{i+j+k \geq 3} \varepsilon^{i+j+k-3}\Big(\beta_{1} Q_{i}\left(Q_{j}: \mathbf{D}_{k}\right)+\beta_{7}\left(\mathbf{D}_{i} \cdot Q_{j} \cdot Q_{k}+Q_{i} \cdot Q_{j} \cdot \mathbf{D}_{k}\right)\Big) \\
& +\sum_{i+j \geq 3} \varepsilon^{i+j-3}\Big(\beta_{5} \mathbf{D}_{i} \cdot Q_{j}+\beta_{6} Q_{i} \cdot \mathbf{D}_{j}+\sigma^{d}\left(Q_{i}, Q_{j}\right)\Big)\\&+\frac{\mu_{2}}{2}\left(\partial_tQ_3+\sum_{i+j\ge 3}\left(\ve^{i+j-3}\vv_i\cdot \nabla Q_j-[\BOm_i,Q_j]\right)\right) ,
\end{align*}
and $\GG_2$, $\GG_3$ are given by
\begin{align*}
\mathbf{G}_2= &~ \beta_1\left(\widetilde{Q}(Q_R: \widetilde{\mathbf{D}})+Q_R(\widetilde{Q}: \widetilde{\mathbf{D}})+\varepsilon Q_0(\widehat{Q}^{\varepsilon}: \mathbf{D}_R)+\varepsilon \widehat{Q}^{\varepsilon}(\widetilde{Q}: \mathbf{D}_R)\right) \\
& +\beta_5\left(\widetilde{\mathbf{D}} \cdot Q_R+\varepsilon \mathbf{D}_R \cdot \widehat{Q}^{\varepsilon}\right)+\beta_6\left(\varepsilon \widehat{Q}^{\varepsilon} \cdot \mathbf{D}_R+Q_R \cdot \widetilde{\mathbf{D}}\right) \\
& +\beta_7\left(\widetilde{\mathbf{D}} \cdot Q_R \cdot \widetilde{Q}+\widetilde{\mathbf{D}} \cdot \widetilde{Q} \cdot Q_R+\varepsilon \mathbf{D}_R \cdot \widehat{Q}^{\varepsilon} \cdot \widetilde{Q}+\varepsilon \mathbf{D}_R \cdot Q_0 \cdot \widehat{Q}^{\varepsilon}\right) \\
& +\beta_7\left(\widetilde{Q} \cdot Q_R \cdot \widetilde{\mathbf{D}}+Q_R \cdot \widetilde{Q} \cdot \widetilde{\mathbf{D}}+\varepsilon \widehat{Q}^{\varepsilon} \cdot Q_0 \cdot \mathbf{D}_R+\varepsilon \widetilde{Q} \cdot \widehat{Q}^{\varepsilon} \cdot \mathbf{D}_R\right) \\
& -\frac{\mu_2}{2}\big(\ve[\BOm_R, \widehat{Q}^\ve]+[\widetilde{\BOm},Q_R]\big)+\mu_1\left[Q_R,(\partial_t \widetilde{Q}+\tilde{\mathbf{v}} \cdot \nabla \widetilde{Q}-[\widetilde{\BOm}, \widetilde{Q}])\right] \\
& -\mu_1\left(\left[\widetilde{Q},[\widetilde{\BOm}, Q_R]\right]-\left[\varepsilon \widehat{Q}^{\varepsilon},(\PP-[\BOm_R, Q_0])\right] +\left[\widetilde{Q},[\boldsymbol{\Omega}_R, \varepsilon \widehat{Q}^{\varepsilon}]\right]\right)\\
& +\sigma^d(\widetilde{Q}, Q_R)+\sigma^d(Q_R, \widetilde{Q}),\\
\mathbf{G}_3= &~ \varepsilon^3\bigg(\beta_1\left(\widetilde{Q}(Q_R: \mathbf{D}_R)+Q_R(\widetilde{Q}: \mathbf{D}_R)+Q_R(Q_R: \widetilde{\mathbf{D}})+\varepsilon^3 Q_R(Q_R: \mathbf{D}_R)\right) \\
& +\beta_7\left(\widetilde{\mathbf{D}} \cdot Q_R \cdot Q_R+\mathbf{D}_R \cdot \widetilde{Q} \cdot Q_R+\mathbf{D}_R \cdot Q_R \cdot \widetilde{Q}+\varepsilon^3 \mathbf{D}_R \cdot Q_R \cdot Q_R\right) \\
& +\beta_7\left(\widetilde{Q} \cdot Q_R \cdot \mathbf{D}_R+Q_R \cdot \widetilde{Q} \cdot \mathbf{D}_R+Q_R \cdot Q_R \cdot \widetilde{\mathbf{D}}+\varepsilon^3 Q_R \cdot Q_R \cdot \mathbf{D}_R\right) \\
& +\beta_5 \mathbf{D}_R \cdot Q_R+\beta_6 Q_R \cdot \mathbf{D}_R-\frac{\mu_2}{2}[\BOm_R,Q_R] -\mu_1\left[\widetilde{Q},\left[\BOm_R, Q_R\right]\right] \\
& +\mu_1\left[Q_R,\big(\PP-[\widetilde{\Omega}, Q_R]-[\Omega_R, \widetilde{Q}]-\ve^3[\BOm_R,Q_R]\big)\right] +\mu_1 \varepsilon^3\sigma^d\left(Q_R, Q_R\right)\bigg),
\end{align*}
where $\widetilde{\DD}=\sum^2_{k=0}\ve^k\DD_k$, and $\widetilde{Q}, \widetilde{\BOm}$ and $\widehat{Q}^{\ve}$ are just as we expressed above.

Under a unified case (including two small parameters), we have derived a series of equations for $(Q_k,\vv_k;Q_3)(0\leq k\leq2)$ by the Hilbert expansion, and the remainder system is also presented subsequently. The $O(\ve^{-1})$ system requires that $\CJ(Q_0)=0$, implying from Proposition \ref{critical-points} that $Q_0$ is the critical point of the bulk energy $f_b$, which will be taken as the uniaxial global minimum with the following form:
\begin{align}
    Q_0(t,\xx)=s\Big(\nn(t,\xx)\nn(t,\xx)-\frac{1}{3}\II\Big), \label{Q0}
\end{align}
for some $\nn(t, \xx)\in\BS$ and $s=s_1$.

The first task becomes how to solve $(Q_k,\vv_k;Q_3)(0\leq k\leq2)$ from the above system (\ref{ve 0-Q})--(\ref{ve 2-div}). It can be easily observed that the equations of order $O(\ve^k)(k=0,1,2)$ is not closed, since the system of the leading terms $Q_k(k=0,1,2)$ contains the non-leading terms $Q_{k+1}$. However, we will take advantage of the zero-eigenvalue subspace $\mathrm{Ker}\CH_{\nn}$ of the linearized operator $\CH_{\nn}$ to cancel the non-leading terms, and thus closing the system of the leading order.
The second task is how to estimate the remainder system (\ref{eq-QR})--(\ref{eq-vR-div}). In order to control nonlinear remainder terms, it is natural to introduce the following energy functionals  as follow:
\begin{align}
   {\fE}_m(t)=&~\frac{1}{2}\int_{\mathbb{R}^3}\left(\left|\mathbf{v}_R\right|^2+
\ve^mJ\left|\PP\right|^2+M\left|Q_R\right|^2+\varepsilon^{-1} \CH_{\mathbf{n}}^{\varepsilon}\left(Q_R\right): Q_R\right) \nonumber\\
&~~~+\varepsilon^2\left(\left|\nabla \mathbf{v}_R\right|^2+  \ve^m J\left|\partial_i \PP\right|^2+\varepsilon^{-1} \CH_\nn^{\varepsilon}\left(\partial_i Q_R\right): \partial_i Q_R\right) \nonumber\\
&~~~+\varepsilon^4\left(\left|\Delta \mathbf{v}_R\right|^2+ \ve^m J\left|\Delta\PP\right|^2+\varepsilon^{-1} \CH_\nn^{\varepsilon}\left(\Delta Q_R\right): \Delta Q_R\right) \mathrm{d} \mathbf{x}\nonumber\\
\eqdefa&~{\fE}_{m,0}(t)+{\fE}_{m,1}(t)+{\fE}_{m,2}(t),\label{energyE}\\
\fF(t) =& \int_{\mathbb{R}^3} \Big(|\nabla\vv_R|_{L^2}^2+\mu_1|{\mathbf{U}}_0|^2\Big)
+\ve^2\Big(\left|\nabla \partial
_i\mathbf{v}_R\right|^2+\mu_1|{\mathbf{U}}_1|^2\Big)\nonumber\\
&+\ve^4\Big(\left|\nabla \Delta\mathbf{v}_R\right|^2+\mu_1|
{\mathbf{U}}_2|^2\Big)\mathrm{d} \mathbf{x}\nonumber\\
\eqdefa&~\fF_0(t)+\fF_1(t)+\fF_2(t),\label{energyF}
\end{align}
where $\CH^{\ve}_{\nn}(Q_R)=\CH_{\nn}(Q_R)+\ve\CL(Q_R),\PP=(\partial_t+\tilde{\vv}\cdot\nabla)Q_R+\vv^\ve\cdot\nabla Q_R$, and $M\geq 1$ is a sufficiently large constant to be determined later, and $\mathbf{U}_k(k=0,1,2)$ are defined as, respectively,
\begin{align}
{\mathbf{U}}_0=&~\PP-[\BOm_R,Q_0]+\frac{\mu_2}{2\mu_1}\DD_R, \label{UU0}\\
{\mathbf{U}}_1=&~\partial_i \PP-\left[\partial_i \boldsymbol{\Omega}_R, Q_0\right]+\frac{\mu_2}{2 \mu_1} \partial_i \mathbf{D}_R, \label{UU1}\\
{\mathbf{U}}_2=&~\Delta \PP-\left[\Delta\boldsymbol{\Omega}_R, Q_0\right]+\frac{\mu_2}{2 \mu_1} \Delta \mathbf{D}_R. \label{UU2}
\end{align}
We would remark that the terms $\FF_R, \GG_R$ and $\GG'_R$ in the equations (\ref{eq-QR})--(\ref{eq-vR}) are all {\it good terms}, which are easily estimated with the aid of the functionals (\ref{energyE})--(\ref{energyF}) (see Lemmas \ref{norm:FR}--\ref{norm:vR} and Lemma \ref{nu:norm:FR}). In the next two sections, we will accomplish the above two tasks under two different cases.

\section{From the Qian--Sheng model to the full inertial Ericksen--Leslie model}\label{m0-section}

This section will be devoted to rigorously justify the uniaxial limit from the inertial Qian--Sheng model to the full inertial Ericksen--Leslie model. In this section, only a small elastic parameter $\ve$ is considered, while the inertial density $J$ is a given constant ({\it not a small parameter}).  Since $\chi(m)=1$ when $m=0$, the Qian--Sheng system (\ref{eq:Q})--(\ref{eq:free div}) is a hyperbolic system with the small parameter $\ve$. The uniaxial limit model is just the hyperbolic Ericksen--Leslie system. In a word, in the framework of smooth solutions, we will prove that when the elastic coefficients tend to zero (called the uniaxial limit), the solution to the Qian--Sheng model converges to the solution to the full inertial Ericksen--Leslie model.

The evolution system of $(\nn,\vv_0)$ is determined by the $O(1)$ system (\ref{ve 0-Q})--(\ref{ve 0-div}). By letting $Q_0$ take (\ref{Q0}), the uniaxial limit model, i.e., the full inertial Ericksen--Leslie system can be deduced, which is exactly what has been done in \cite{LW}.

\begin{proposition}[see \cite{LW}]\label{0toEL}
   Under the condition of $m=0$, i.e., $\chi(m)=1$,
   if $(Q_0, \vv_0)$ is a smooth solution of the $O(1)$ system {\em (\ref{ve 0-Q})--(\ref{ve 0-div})},  then $(\nn, \vv_0)$ must be a solution of the full inertial Ericksen--Leslie system {\em (\ref{EL-vv})--(\ref{EL-div})},  where the coefficients are determined by {\rm(\ref{coefficients})}.
\end{proposition}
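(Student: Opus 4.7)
The plan is to substitute the uniaxial ansatz $Q_0=s(\nn\nn-\frac{1}{3}\II)$ with $|\nn|=1$ directly into the $O(1)$ system (\ref{ve 0-Q})--(\ref{ve 0-div}) and read off (\ref{EL-vv})--(\ref{EL-div}) together with the coefficient identifications (\ref{coefficients}). The $Q$-equation (\ref{ve 0-Q}) still contains the non-leading term $\CH_\nn(Q_1)$, which I would eliminate by applying the projector $\MP^{in}$ from (\ref{projection_in1}): by Proposition \ref{linearized-oper-prop}(i) this kills $\CH_\nn(Q_1)$, leaving a closed equation for $\nn$ on $\mathrm{Ker}\CH_\nn$ once $\vv_0$ is prescribed. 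The velocity equation (\ref{ve 0-v}) is then treated by direct substitution and term-by-term matching with $\sigma^L+\sigma^E$.

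For the director equation, first I would record the uniaxial identities
\[
\dot Q_0=s(\dot\nn\nn+\nn\dot\nn),\quad [\BOm_0,Q_0]=s\bigl((\BOm_0\nn)\nn+\nn(\BOm_0\nn)\bigr),\quad \CN_0=s(\NN_0\nn+\nn\NN_0),
\]
each already lying in $\mathrm{Ker}\CH_\nn$ thanks to $\dot\nn\cdot\nn=0$ and $\BOm_0\nn\cdot\nn=0$. For $\ddot Q_0=s(\ddot\nn\nn+\nn\ddot\nn+2\dot\nn\dot\nn)$, writing $\ddot\nn=-|\dot\nn|^2\nn+\ddot\nn^{\perp}$ one finds $\MP^{in}\ddot Q_0=s(\ddot\nn^{\perp}\nn+\nn\ddot\nn^{\perp})$, the $\dot\nn\dot\nn$ and $\nn\nn$ pieces sitting in $(\mathrm{Ker}\CH_\nn)^{\perp}$. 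Next I would compute $\MP^{in}\CL(Q_0)$ by expanding $\CL$ from (\ref{CL-LP}) on the uniaxial ansatz, regrouping into $\aaa\nn+\nn\aaa$ (with $\aaa\perp\nn$) contributions and $(\mathrm{Ker}\CH_\nn)^{\perp}$ contributions, and cross-checking against the Euler--Lagrange derivative $-\delta E_F/\delta\nn=\hh$. This forces the identifications $k_1=k_3=(2L_1+L_2+L_3)s^2$, $k_2=2L_1s^2$, $k_4=L_3s^2$ and yields $\MP^{in}\CL(Q_0)=s(\hh\nn+\nn\hh)$ up to in-kernel simplifications. Using that $\aaa\nn+\nn\aaa$ with $\aaa\perp\nn$ vanishes iff $\nn\times\aaa=0$, the projected $Q$-equation becomes
\[
\nn\times\bigl(I\ddot\nn-\hh+\gamma_1\NN_0+\gamma_2\DD_0\cdot\nn\bigr)=0,\qquad I=2s^2J,\ \gamma_1=2\mu_1s^2,\ \gamma_2=\mu_2s,
\]
which is (\ref{EL-vv}).

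For the momentum equation I would substitute $Q_0$ into each term of (\ref{vis-stressQ})--(\ref{vis-fieldQ}) inside (\ref{ve 0-v}), collect by tensorial structure, and absorb every contribution proportional to $\II$ and to $(\nn\nn:\DD_0)\II$ into a modified pressure. The remainder reassembles into $\sigma^L$ with $\alpha_1=\beta_1s^2$, $\alpha_{2,3}=\tfrac{1}{2}\mu_2s\mp\mu_1s^2$, $\alpha_4=\beta_4-\tfrac{s}{3}(\beta_5+\beta_6)+\tfrac{2s^2}{9}\beta_7$, $\alpha_5=\beta_5s+\tfrac{1}{3}\beta_7s^2$, $\alpha_6=\beta_6s+\tfrac{1}{3}\beta_7s^2$, and $\sigma^d(Q_0,Q_0)$ reduces to $\sigma^E=-\tfrac{\partial E_F}{\partial(\nabla\nn)}\cdot(\nabla\nn)^T$ under the same elastic-constant identifications. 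The Parodi relations (\ref{Parodi1})--(\ref{Parodi2}) are an immediate consequence of (\ref{Q-Parodi}).

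The main obstacle is the reduction of $\MP^{in}\CL(Q_0)$ to a multiple of $s(\hh\nn+\nn\hh)$: the three differential operators in (\ref{CL-LP}) each produce a mixture of $\mathrm{Ker}\CH_\nn$ and $(\mathrm{Ker}\CH_\nn)^{\perp}$ contributions on the uniaxial manifold, and the $k_i$-identifications in (\ref{coefficients}) are pinned down only after a careful reduction matching the Oseen--Frank variation $\delta E_F/\delta\nn$. The remaining steps are algebraic bookkeeping of the Leslie stress; these have been carried out in detail in \cite{LW}, to which I would refer for the complete computation.
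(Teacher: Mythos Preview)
Your proposal is correct and matches the paper's approach: the paper does not give a proof but simply cites \cite{LW}, and your outline---substituting the uniaxial ansatz, applying $\MP^{in}$ to kill $\CH_\nn(Q_1)$ and extract the director equation on $\mathrm{Ker}\CH_\nn$, then matching the stress terms in the velocity equation against $\sigma^L+\sigma^E$---is precisely the computation carried out there. Your identification of the $\MP^{in}\CL(Q_0)$ reduction as the only nontrivial step is accurate, and your final reference to \cite{LW} for the bookkeeping is exactly what the paper does.
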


\subsection{Existence of the Hilbert expansion}\label{J-existence-He}

Under the condition of $m=0$, i.e., $\chi(m)=1$, the main task of this subsection is to show the existence of the Hilbert expansion, that is, how to solve $(Q_k,\vv_k)(k=1,2)$ and $Q_3$ from the system (\ref{ve 1-Q})--(\ref{ve 2-div}) and derive the corresponding estimates.  Specifically, we show the following proposition.

\begin{proposition}{\label{prop Q(1)}}
Let $(\nn, \vv_0)$ be a smooth solution to the full inertial Ericksen--Leslie system {\em (\ref{EL-vv})--(\ref{EL-div})} derived from the zeroth-order inertial equations {\em (\ref{ve 0-Q})--(\ref{ve 0-div})} on $[0, T]$, satisfying
    \[(\vv_0,\nabla \nn,\partial_t \nn)\in L^\infty([0, T];H^{\ell}),\quad {\ell \ge 20.} \]
    Then, there exists the solutions $(Q_k, \vv_k)(k=0,1,2)$ and $Q_3\in ({\rm Ker}\CH_\nn)^\perp$ of the system {\em (\ref{ve 1-Q})--(\ref{ve 2-div})} satisfying
    \begin{equation}
    \begin{aligned}
        &(\vv_k,\nabla Q_k,\partial_tQ_k)\in L^\infty([0, T];H^{\ell-4k}) (k=0,1,2),\quad Q_3\in L^\infty([0, T];H^{\ell-11}).
    \end{aligned}\label{def Q1}
    \end{equation}
\end{proposition}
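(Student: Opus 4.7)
The plan is to solve \eqref{ve 1-Q}--\eqref{ve 2-div} iteratively by exploiting the orthogonal decomposition $\mathbb{S}^3_0 = \mathrm{Ker}\,\CH_\nn \oplus (\mathrm{Ker}\,\CH_\nn)^\perp$ afforded by Proposition \ref{linearized-oper-prop}. The key observation is that in each of the $Q$-equations \eqref{ve 0-Q}, \eqref{ve 1-Q}, \eqref{ve 2-Q}, the highest-index unknown $Q_{k+1}$ enters only through $\CH_\nn(Q_{k+1})\in(\mathrm{Ker}\,\CH_\nn)^\perp$: projecting onto $\mathrm{Ker}\,\CH_\nn$ therefore eliminates $Q_{k+1}$ and yields a closed equation for the pair $(\MP^{in}(Q_k),\vv_k)$, while projecting onto $(\mathrm{Ker}\,\CH_\nn)^\perp$ and applying the explicit inverse \eqref{HM-inverse} recovers $\MP^{out}(Q_{k+1})$.

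Concretely, $Q_0 = s(\nn\nn-\tfrac{1}{3}\II)\in L^\infty([0,T];H^\ell)$ is known from the hypothesis (and Proposition \ref{0toEL} gives the consistency of the projection of \eqref{ve 0-Q} onto $\mathrm{Ker}\,\CH_\nn$ with the Ericksen--Leslie system). From the $(\mathrm{Ker}\,\CH_\nn)^\perp$-projection of \eqref{ve 0-Q} one reads off
\[
\MP^{out}(Q_1) = \CH_\nn^{-1}\,\MP^{out}\!\Big(-J\ddot{Q}_0-\mu_1\dot{Q}_0-\CL(Q_0)-\tfrac{\mu_2}{2}\DD_0+\mu_1[\BOm_0,Q_0]\Big),
\]
at the cost of four derivatives (two from $\ddot{Q}_0$, two from $\CL(Q_0)$). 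Parametrizing the remaining part as $\MP^{in}(Q_1) = \nn\nn_1^\perp + \nn_1^\perp\nn$ with $\nn_1^\perp\cdot\nn = 0$ and projecting \eqref{ve 1-Q} onto $\mathrm{Ker}\,\CH_\nn$, together with \eqref{ve 1-v}--\eqref{ve 1-div}, produces a linear hyperbolic--parabolic system for $(\nn_1^\perp,\vv_1)$ whose principal part coincides with the linearization of the full inertial Ericksen--Leslie system around $(\nn,\vv_0)$ and whose source involves only already-known quantities (through $\TT_1$ and $\MP^{out}(Q_1)$). Standard energy methods, adapting the schemes of \cite{JL2,WZZ1,WW}, then give a unique smooth solution on $[0,T]$ in $H^{\ell-4}$. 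Iterating the same projection argument yields $\MP^{out}(Q_2)$ from \eqref{ve 1-Q}, then $(\MP^{in}(Q_2),\vv_2)$ from \eqref{ve 2-Q}--\eqref{ve 2-div}, and finally $Q_3\in(\mathrm{Ker}\,\CH_\nn)^\perp$ directly from the $(\mathrm{Ker}\,\CH_\nn)^\perp$-projection of \eqref{ve 2-Q}. A careful count of derivative losses per level (roughly four: two from source terms of $\ddot Q$- or $\dot{\DP}$-type such as those entering $\TT_k$, and two from elliptic regularity on the velocity equation) matches the spaces stated in \eqref{def Q1}; the residual drop from $H^{\ell-8}$ to $H^{\ell-11}$ for $Q_3$ accounts for the extra derivatives needed to assemble $\TT_2$ and the final application of $\CH_\nn^{-1}$.

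The main obstacle is closing the energy estimate for the coupled linear hyperbolic--parabolic systems at orders $\ve$ and $\ve^2$. These systems inherit, at the linearized level, the Parodi-type relations \eqref{Q-Parodi}--\eqref{Beta-relation} that underpin the dissipation law of the full Qian--Sheng model, so one expects a positive-definite dissipation; the technical care lies in commuting the time-dependent projection $\MP^{in}$, whose definition depends on $\nn(t,\xx)$, through $\partial_t$ and $\partial_i$ (the commutators produce lower-order terms that must be absorbed using the regularity of $\nn$), and in handling the inertial term $J\dot{\DP}_k$ which renders the $Q$-subsystem genuinely second-order in time and forces the energy to control $\partial_t\MP^{in}(Q_k)$ as well. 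Once these linear estimates close, the $\MP^{out}$ steps via \eqref{HM-inverse} and the derivative bookkeeping leading to \eqref{def Q1} are routine.
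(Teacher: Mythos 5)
Your proposal is correct and follows essentially the same route as the paper: project the $Q$-equations onto $\mathrm{Ker}\,\CH_\nn$ to close a linear hyperbolic--parabolic system for $(\MP^{in}(Q_k),\vv_k)$, recover $\MP^{out}(Q_{k+1})$ via $\CH_\nn^{-1}$, and close the energy estimate using the dissipation structure inherited from \eqref{Q-Parodi}--\eqref{Beta-relation} together with careful commutator handling of the $\nn$-dependent projections (the content of Lemma \ref{MP-Q1}). The only cosmetic difference is that you parametrize $\MP^{in}(Q_1)$ by a vector field $\nn_1^\perp$ and appeal to the linearized Ericksen--Leslie system, whereas the paper works directly with $Q_1^\top$ in $Q$-tensor form and derives the linear energy inequality from the inherited Qian--Sheng dissipation structure.
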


As mentioned in Section \ref{Hilbert-expansion}, the equations (\ref{ve 1-Q})--(\ref{ve 1-div}) is not a closed system, due to the appearance of the unknown term related with $Q_2$. The closure of the system can be obtained by projecting the equations (\ref{ve 1-Q})--(\ref{ve 1-div}) into the subspace ${\rm Ker}\CH_\nn$. Let us decompose $Q_1$ in the light of ${\rm Ker}\CH_\nn$, i.e., $Q_1=Q^\top_1+Q_1^\perp$ with $Q_1^\top\in {\rm Ker}\CH_\nn$ and $Q_1^\perp\in({\rm Ker}\CH_\nn)^\perp$. We assume that there exists a smooth solution $(Q_0,\vv)$, where $Q_0=Q_0(\nn)$ is a function of $\nn\in\BS$.  Before proving Proposition \ref{prop Q(1)}, we present a lemma about the terms $\TT_1, \DP_1$ and the material derivatives of $Q_1$ and $\DP_1$. In this section, $L(\cdot)$ represents the linear function with the coefficients belonging to $L^\infty([0, T];H^{\ell-1})$ and $R\in L^\infty([0, T]; H^{{{\ell-3}}})$ some function depending only on $\nn, \vv_0$ and $Q_1^\perp$.

\begin{lemma}\label{MP-Q1}
 Let two projection operators $ \MP^{in}$ and $\MP^{out}$ be defined by \eqref{projection_in1} and \eqref{projection_out1}, respectively. Then it follows that
   \begin{align*}
    \MP^{out}(\Dot{Q}_1)=&~L(Q_1^\top)+R, \quad
\MP^{in}(\dot{Q}_1)={\dot{Q}_1^ \top}+L({Q}_1^\top)+R,\\
\MP^{out}({\DP}_1)=&~L(Q_1^\top,\vv_1)+R, \quad\MP^{in}({\DP}_1)=\dot {Q}_1^\top+L(Q_1^\top,\vv_1)+R,
\\
\MP^{in}(\dot{\DP}_1)=&~{\dot{\DP}_1^ \top}+L(\DP_1^\top,{Q}^\top_1,\vv_1)+R,\quad\MP^{in}(\TT_1)=L(Q_1^\top,\vv_1)+R,
\end{align*}
where the notations $\dot{f}=(\partial_t+\vv_0\cdot\nabla)f,~ \dot{f}^\top=(\partial_t+\vv_0\cdot\nabla)f^{\top}$, and the terms $\DP_1$ and $\TT_1$ are given by, respectively,
\begin{align}
\DP_1=&~\dot{Q}_1+\vv_1\cdot\nabla Q_0,\label{DP_1}\\
\TT_1=&~\mu_1[\BOm_0,Q_1]+\frac{b}{2}\CB(Q_1,Q_1)-c\CC(Q_1,Q_1,Q_0)-J\vv_1\cdot\nabla\dot{Q}_0.\label{TT_1}
\end{align}
\end{lemma}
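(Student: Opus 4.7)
The plan is to exploit two structural facts. First, the projection operators $\MP^{in}_\nn,\MP^{out}_\nn$ defined in \eqref{projection_in1}--\eqref{projection_out1} depend on $\nn(t,\xx)$, so the material derivative $\partial_t+\vv_0\cdot\nabla$ does not commute with them; the commutator is a zeroth-order operator whose coefficients lie in $(\nn,\dot\nn)$ and hence in $L^\infty([0,T];H^{\ell-1})$. Second, before turning to this lemma one projects the $O(1)$ equation \eqref{ve 0-Q} onto $(\mathrm{Ker}\CH_\nn)^\perp$ and inverts $\CH_\nn$ by Proposition \ref{linearized-oper-prop}(iii), which expresses $Q_1^\perp$ as a smooth function of $\nn,\vv_0$ and their derivatives. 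Consequently $\dot{Q_1^\perp}$, $\nabla Q_1^\perp$, and any expression built only from $\nn,\vv_0,Q_1^\perp$ are automatically $R$-terms.

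For the four identities involving $\dot Q_1$ and $\DP_1$, I would use the commutation identity
\[ \MP^{in}(\dot{Q}_1) = (\partial_t+\vv_0\cdot\nabla)\bigl(\MP^{in}(Q_1)\bigr) - [(\partial_t+\vv_0\cdot\nabla)\MP^{in}_\nn](Q_1). \]
The first term on the right is $\dot{Q}_1^\top$ by definition, and the second is linear in $Q_1$ with $(\nn,\dot\nn)$-coefficients, so splitting $Q_1=Q_1^\top+Q_1^\perp$ inside produces $L(Q_1^\top)+R$. Subtracting from $\dot Q_1=\dot{Q_1^\top}+\dot{Q_1^\perp}$ and noting that $\dot{Q_1^\perp}$ is $R$ yields the $\MP^{out}(\dot Q_1)$ identity. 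The two identities for $\DP_1=\dot Q_1+\vv_1\cdot\nabla Q_0$ follow immediately, since $\vv_1\cdot\nabla Q_0$ is linear in $\vv_1$ with coefficients in $(\nn,\nabla\nn)$. The same trick, applied one level higher, gives
\[ \MP^{in}(\dot{\DP}_1) = \dot{\DP_1^\top} - [(\partial_t+\vv_0\cdot\nabla)\MP^{in}_\nn](\DP_1), \]
and on splitting $\DP_1=\DP_1^\top+\DP_1^\perp$ the commutator part uses $\DP_1^\perp=\MP^{out}(\DP_1)=L(Q_1^\top,\vv_1)+R$ established in the previous step, so it contributes $L(\DP_1^\top,Q_1^\top,\vv_1)+R$.

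The main obstacle is the treatment of $\TT_1$ in \eqref{TT_1}: the pieces $\frac{b}{2}\CB(Q_1,Q_1)$ and $-c\CC(Q_1,Q_1,Q_0)$ are quadratic and therefore not obviously of the $L(Q_1^\top,\vv_1)+R$ form. Splitting $Q_1=Q_1^\top+Q_1^\perp$, the cross terms are linear in $Q_1^\top$ with $Q_1^\perp$-dependent (hence $R$-level) coefficients, and the pure $Q_1^\perp$ pieces are $R$. The decisive step that I have to verify by direct computation is
\[ \MP^{in}\bigl(\CB(Q_1^\top,Q_1^\top)\bigr)=0, \qquad \MP^{in}\bigl(\CC(Q_1^\top,Q_1^\top,Q_0)\bigr)=0. \]
Using the parametrization $Q_1^\top=\nn\uu+\uu\nn$ with $\uu\cdot\nn=0$, one computes $Q_1^\top\cdot Q_1^\top=|\uu|^2\nn\nn+\uu\uu$, $|Q_1^\top|^2=2|\uu|^2$ and $Q_1^\top:Q_0=0$, so both quadratic expressions become linear combinations of $\nn\nn$, $\uu\uu$, and $\II$; the three identities $\MP^{in}(\nn\nn)=\MP^{in}(\uu\uu)=\MP^{in}(\II)=0$ follow directly from \eqref{projection_in1} and $\uu\cdot\nn=0$. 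The remaining pieces $\mu_1[\BOm_0,Q_1]$ and $-J\vv_1\cdot\nabla\dot Q_0$ of $\TT_1$ are linear in $(Q_1,\vv_1)$ with coefficients in $\nn,\vv_0$, and splitting $Q_1=Q_1^\top+Q_1^\perp$ again gives $L(Q_1^\top,\vv_1)+R$. Assembling these contributions produces $\MP^{in}(\TT_1)=L(Q_1^\top,\vv_1)+R$ and completes the proof.
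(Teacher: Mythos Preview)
Your proof is correct and follows essentially the same route as the paper: both rely on the commutation identity $(\partial_t+\vv_0\cdot\nabla)\MP^{in}(Q)=\MP^{in}(\dot Q)+L(Q)$ (and its $\MP^{out}$ counterpart), and both handle $\TT_1$ by splitting $Q_1=Q_1^\top+Q_1^\perp$ and showing that the pure $Q_1^\top$ quadratic pieces lie in $(\mathrm{Ker}\CH_\nn)^\perp$. The only difference is cosmetic: the paper records the latter fact as ``a simple calculation leads to $\frac{b}{2}\CB(Q_1^\top,Q_1^\top)-c\CC(Q_1^\top,Q_1^\top,Q_0)\in(\mathrm{Ker}\CH_\nn)^\perp$'' without detail, whereas you carry out that calculation explicitly via the parametrization $Q_1^\top=\nn\uu+\uu\nn$.
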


\begin{proof}
By the definition of $\MP^{in}$,
 for any $Q\in\mathbb{S}_0^3$, we have
 \begin{align}
    (\partial_t+\vv_0\cdot\nabla)\MP^{in}(Q)=&~(\partial_t+\vv_0\cdot\nabla)\Big((\nn\nn\cdot Q+Q\cdot\nn\nn)-2(Q:\nn\nn)\nn\nn\Big)\nonumber\\
     =&~\Big((\nn\nn\cdot \dot{Q}+\dot{Q}\cdot\nn\nn)-2(\dot{Q}:\nn\nn)\nn\nn\Big)\nonumber\\
     &+\Dot{\overline{\nn\nn}}\cdot Q+Q\cdot \Dot{\overline{\nn\nn}}-2(Q:\Dot{\overline{\nn\nn}})\nn\nn-2(Q:\nn\nn)\Dot{\overline{\nn\nn}}\nonumber\\
     =&~\MP^{in}(\dot{Q})+L(Q),\label{part-in-Q}
 \end{align}
where $\Dot{\overline{\nn\nn}}=(\partial_t+\vv_0\cdot\nabla)(\nn\nn)$. Armed with the identity mapping $i_d=\MP^{in}+\MP^{out}$, we deduce
\begin{align}
   \nonumber (\partial_t+\vv_0\cdot\nabla)\MP^{out}({Q})=&\dot{Q}-(\partial_t+\vv_0\cdot\nabla)\MP^{in}(Q)\nonumber\\
     =&\dot{Q}-\MP^{in}(\dot{Q})-L(Q)\nonumber\\
     =&\MP^{out}(\dot{Q})-L(Q).\label{part-out-Q}
\end{align}
Taking $Q=Q_1$ in (\ref{part-in-Q}) and (\ref{part-out-Q}), respectively,  we obtain
\begin{align*}
    \MP^{in}(\dot{Q}_1)=&~(\partial_t+\vv_0\cdot\nabla)\MP^{in}(Q_1)-L(Q_1^\top+Q_1^\perp)\\
    =&~\dot{Q}_1^\top+L(Q_1^\top)+R,\\
     \MP^{out}(\dot{Q}_1)=&~(\partial_t+\vv_0\cdot\nabla)\MP^{out}(Q_1)+L(Q_1^\top+Q_1^\perp)\\
     =&~L(Q_1^\top)+R.
\end{align*}
Acting the projections $\MP^{in}$ and $\MP^{out}$ on $\DP_1=\dot{Q}_1+\vv_1\cdot\nabla Q_0$, respectively, and using the above two equalities, we have
\begin{align*}
     \MP^{in}(\DP_1)=&\MP^{in}(\dot{Q}_1)+\MP^{in}(\vv_1\cdot\nabla Q_0)=\dot{Q}_1^\top+L(Q_1^\top,\vv_1)+R,\\
     \MP^{out}(\DP_1)=&\MP^{out}(\dot{Q}_1)+\MP^{out}(\vv_1\cdot\nabla Q_0)=L(Q_1^\top,\vv_1)+R.
\end{align*}
Similarly, taking $Q=\DP_1$ in (\ref{part-in-Q}) yields
\begin{align*}
     \MP^{in}(\dot{\DP}_1)=&~(\partial_t+\vv_0\cdot\nabla)\MP^{in}(\DP_1)-L(\DP_1^\top+\DP_1^\perp)\\
     =&~\dot{\DP}_1^\top+L(\DP_1^\top,Q_1^\top,\vv_1)+R.
\end{align*}
Using the expression of $\TT_1$ and the decomposition $Q_1=Q^\top_1+Q_1^\perp$, there holds
\begin{align*}
    \TT_1
    =\frac{b}{2}\CB(Q_1^\top,Q_1^\top)-c\CC(Q_1^\top,Q_1^\top,Q_0)+L(Q_1^\top,\vv_1)+R.
\end{align*}
Further, a simple calculation leads to
\begin{align*}
 \frac{b}{2}\CB(Q_1^\top,Q_1^\top)-c\CC(Q_1^\top,Q_1^\top,Q_0)\in ({\rm Ker}\CH_\nn)^\perp.
\end{align*}
We thus deduce that $\MP^{in }(\TT_1)=L(Q_1^\top,\vv_1)+R$.
 \end{proof}

The following lemma provides a sufficient and necessary condition on the Leslie coefficients, which guarantees that the energy is dissipated.
\begin{lemma}[see \cite{WZZ1}]\label{nn, D}
The dissipation relation \[\hat{\beta}_{1}|\mathbf{n n}: \mathbf{D}|^{2}+\hat{\beta}_{2}|\mathbf{D}|^{2}+\hat{\beta}_{3}|\mathbf{n} \cdot \mathbf{D}|^{2} \geq 0\] holds for any $\DD\in \mathbb{S}^3_0$ and unit vector $\nn$, if and only if
\begin{align}\label{beta rela} \hat{\beta}_{2} \geq 0,  \quad 2 \hat{\beta}_{2}+\hat{\beta}_{3} \geq 0,  \quad \frac{3}{2} \hat{\beta}_{2}+\hat{\beta}_{3}+\hat{\beta}_{1} \geq 0.
\end{align}
\end{lemma}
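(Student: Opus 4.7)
The plan is to exploit rotational invariance to reduce the inequality to an explicit quadratic form in the independent entries of $\DD$, and then read off necessary and sufficient conditions by a $2\times 2$ positive-semidefiniteness check. The left-hand side is invariant under $\nn \mapsto R\nn$ and $\DD \mapsto R\DD R^T$ for any rotation $R$, so I would first fix $\nn = \mathbf{e}_1$ without loss of generality. Under this normalization, one immediately computes $\nn\nn:\DD = D_{11}$ and $|\nn\cdot\DD|^2 = D_{11}^2 + D_{12}^2 + D_{13}^2$, while the tracelessness constraint $D_{33} = -D_{11}-D_{22}$ yields $|\DD|^2 = 2D_{11}^2 + 2D_{22}^2 + 2D_{11}D_{22} + 2D_{12}^2 + 2D_{13}^2 + 2D_{23}^2$.

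Substituting and collecting terms, the full quadratic form decouples as
\[
Q(\DD) = Q_{\mathrm{diag}}(D_{11},D_{22}) + (2\hat\beta_2+\hat\beta_3)(D_{12}^2+D_{13}^2) + 2\hat\beta_2 D_{23}^2,
\]
where
\[
Q_{\mathrm{diag}}(D_{11},D_{22}) = (\hat\beta_1+2\hat\beta_2+\hat\beta_3)D_{11}^2 + 2\hat\beta_2 D_{11}D_{22} + 2\hat\beta_2 D_{22}^2.
\]
The three pieces involve disjoint variables, so positive semidefiniteness of $Q$ is equivalent to positive semidefiniteness of each piece separately.

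For necessity, setting all components but $D_{23}$ to zero forces $\hat\beta_2\ge 0$; choosing only $D_{12}$ (or $D_{13}$) nonzero forces $2\hat\beta_2+\hat\beta_3 \ge 0$; and positive semidefiniteness of the $2\times 2$ matrix $\bigl(\begin{smallmatrix}\hat\beta_1+2\hat\beta_2+\hat\beta_3 & \hat\beta_2\\ \hat\beta_2 & 2\hat\beta_2\end{smallmatrix}\bigr)$ requires the determinant $2\hat\beta_2\bigl(\hat\beta_1+\tfrac32\hat\beta_2+\hat\beta_3\bigr)$ to be nonnegative, which together with $\hat\beta_2\ge 0$ yields $\hat\beta_1+\tfrac32\hat\beta_2+\hat\beta_3 \ge 0$ (the boundary case $\hat\beta_2=0$ being handled by inspecting the diagonal entry $\hat\beta_1+\hat\beta_3$). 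For sufficiency, assuming \eqref{beta rela}, the two off-block coefficients are nonnegative by assumption, and the $2\times 2$ block has nonnegative determinant together with nonnegative diagonal entries $2\hat\beta_2\ge 0$ and $\hat\beta_1+2\hat\beta_2+\hat\beta_3 \ge \hat\beta_1+\tfrac32\hat\beta_2+\hat\beta_3 \ge 0$, hence is PSD.

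The argument is purely algebraic and I do not expect any real obstacle. The one place calling for care is correctly expanding $D_{33}^2 = (D_{11}+D_{22})^2$ when eliminating the trace constraint; dropping the cross term $2D_{11}D_{22}$ would remove the off-diagonal entry of the $2\times 2$ block and produce the incorrect condition $\hat\beta_1 + 2\hat\beta_2 + \hat\beta_3 \ge 0$ in place of the sharper third inequality in \eqref{beta rela}.
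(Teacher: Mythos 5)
Your proof is correct. The paper itself does not include a proof of this lemma; it is cited from the reference \cite{WZZ1}. Your argument (fix $\nn=\mathbf{e}_1$ by rotational invariance, eliminate $D_{33}$ via the trace constraint, observe that the quadratic form decouples into three blocks in disjoint variables, and apply the $2\times 2$ PSD criterion to the $(D_{11},D_{22})$ block) is a clean and complete elementary derivation of the equivalence; this is essentially the standard route taken in the cited reference. The computations of $|\DD|^2$, the block decomposition, and the determinant $2\hat\beta_2(\hat\beta_1+\tfrac32\hat\beta_2+\hat\beta_3)$ all check out, and you correctly note both the necessity that the redundant condition $\hat\beta_1+2\hat\beta_2+\hat\beta_3\geq 0$ follows from $\hat\beta_2\geq 0$ and $\hat\beta_1+\tfrac32\hat\beta_2+\hat\beta_3\geq 0$, and the degenerate boundary case $\hat\beta_2=0$.
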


We now turn into the proof of Proposition \ref{prop Q(1)}.

 \begin{proof}[Proof of Proposition \ref{prop Q(1)}]
Assume that $(\nn,\vv_0)$ is a solution to the full inertial Ericksen--Leslie system \eqref{EL-vv}--\eqref{EL-div} on $[0,T]$ and satisfies
\begin{align*}
    (\vv_0,\nabla \nn,\partial_t \nn)\in L^\infty([0, T];H^{\ell}),\quad {\ell \ge 20.}
\end{align*}
Since $Q_0=Q_0(\nn)$ is a function of the unit vector $\nn$ and takes the form (\ref{Q0}), it holds that $Q_0\in L^{\infty}([0,T],H^{\ell+1})$.

We solve $Q_1^\perp$ from the equation $(\ref{ve 0-Q})$, i.e.,
\begin{align*}
    Q_1^\perp=\CH_\nn^{-1}\Big(-J\ddot{Q}_0-\mu_1\dot{Q}_0-\CL(Q_0)-\frac{\mu_2}{2}\DD_0+\mu_1[\BOm_0,Q_0]\Big)\in L^\infty([0,T];H^{\ell-1}).
\end{align*}
Here, the inverse $\CH^{-1}_{\nn}$ is well-defined within $(\mathrm{Ker}\CH_{\nn})^{\perp}$ by reason of Proposition \ref{linearized-oper-prop}. Therefore, the existence of $(Q_1,\vv_1)$ can be reduced to solving $(Q^{\top}_1,\vv_1)$. The key observation is that $(\DP_1^\top, Q^{\top}_1,\vv_1)$ fulfils a linear dissipative system, where $\DP_1^\top$ corresponds to $\dot{Q}_1^\top$.

Taking the projection $\MP^{in}$ on both sides of ($\ref{ve 1-Q}$), and noting $\CH_{\nn}(Q_2)\in (\mathrm{Ker}\CH_{\nn})^{\perp}$, we derive from Lemma \ref{MP-Q1} the closed linear system of $(\DP_1^\top, Q^{\top}_1,\vv_1)$,
\begin{align}
   J\dot{\DP}^\top_1+\mu_1{\DP}_1^\top=&~\MP^{in}\Big(-\CL(Q_1^\top)-\frac{\mu_2}{2}\DD_1+\mu_1[\BOm_1,Q_0] \Big)+L(\DP_1^\top,Q_1^\top,\vv_1)+R,\label{L Q1}\\
    (\partial_t+\vv_0\cdot\nabla){\vv}_1=&-\vv_1\cdot\nabla \vv_0-\nabla p_1+\nabla\cdot\Big(\beta_1Q_0(Q_0:\DD_1)+\beta_4\DD_1\nonumber\\
    &+\beta_5\DD_1\cdot Q_0+\beta_6Q_0\cdot Q_1+\beta_7(\DD_1\cdot Q_0^2+Q_0^2\cdot \DD_1)\nonumber\\
   & +\frac{\mu_2}{2}({\DP}_1^\top-[\BOm_1,Q_0])+\mu_1\big[Q_0,({\DP}_1^\top-[\BOm_1,Q_0])\big]\nonumber\\
    &+\sigma^d({Q}_1^\top,Q_0)+\sigma^d(Q_0,Q_1^\top)+L(Q_1^\top,\vv_1)+R\Big),\label{L v1}\\
    \nabla\cdot \vv_1=&~0.\label{L 1div}
\end{align}
To obtain the unique solvability of the linear system(\ref{L Q1})--(\ref{L 1div}),  we establish a priori estimate for the energy
\begin{align*}
    \CE_{\ell}(t)\eqdefa&\sum_{k=0}^{\ell-4}\frac{1}{2}\Big(\Vert\partial_i^k\vv_1\Vert^2_{L^2}+\langle \partial_i^k Q_1^{\top}, \CL(\partial_i^k Q_1^{\top})\rangle+J\|\partial_i^k\DP_1^\top\|_{L^2}^2\Big)+\frac{1}{2}\Vert Q_1^\top\Vert^2_{L^2},\\
    \CF_\ell(t)\eqdefa&\sum_{k=0}^{\ell-4}\Big(\|\nabla\partial_i^k\vv_1\|_{L^2}^2+\mu_1\Big\|\partial_i^k\DP_1^\top-[\partial_i^k\BOm_1,Q_0]+\frac{\mu_2}{2\mu_1}\partial_i^k\DD_1\Big\|_{L^2}^2\Big).
\end{align*}
Specifically, we show that there exists positive constants $C$ and $c_1$, such that
\begin{align*}
    \frac{\ud }{\ud t}\CE_{\ell}(t)+c_1\CF_\ell(t)\leq C\left({\CE}_\ell(t)+\|R(t)\|^2_{H^{\ell-3}}\right).
\end{align*}
Here, $(\DP_1^\top, Q^{\top}_1,\vv_1)$ satisfies $(\vv_1,\nabla Q^{\top}_1,\DP_1^\top)\in L^{\infty}([0,T];H^{\ell-4})$, which implies
\[\begin{aligned}
    \|\partial_tQ_1^\top\|_{H^{\ell-4}}=&~\|\DP_1^\top-\vv_0\cdot\nabla Q_1^\top+L(\vv_1,Q_1^\top)+R\|_{H^{\ell-4}}\\\leq& ~C\|(\vv_1,Q_1^\top,\nabla Q_1^\top,\DP_1^\top,R)\|_{H^{\ell-4}}\in L^\infty[0,T],
\end{aligned}\]
that is, $\partial_tQ_1^\top\in L^\infty([0,T];H^{\ell-4})$.

It suffices to show the case of $k=0$ because of similar arguments for the general case.
The associated energy functionals are given by
\begin{align*}
\CE(t)=&~\frac{1}{2}\left(\left\|\mathbf{v}_1\right\|_{L^2}^2+ J\left\|\DP_1^\top\right\|_{L^2}^2+\left\|Q_1^\top\right\|_{L^2}^2+\langle\CL(Q_1^\top),Q_1^\top\rangle\right),\\
\CF(t)=&~\|\nabla\vv_1\|_{L^2}^2+\mu_1\Big\|\DP_1^\top-[\BOm_1,Q_0]+\frac{\mu_2}{2\mu_1}\DD_1\Big\|_{L^2}^2.
\end{align*}

To begin with, from Lemma \ref{MP-Q1} and $\nabla\cdot\vv_0=0$, we infer that
\begin{align}\label{Q1top-L2}
    \frac{1}{2}\frac{\ud}{\ud t}\|Q_1^\top\|_{L^2}^2=&~\langle\partial_t{Q}_1^\top,Q_1^\top\rangle=\langle\dot{Q}_1^\top,Q^{\top}_1\rangle\nonumber\\
    =&~\big\langle\DP_1^\top-L(Q_1^\top,\vv_1)-R,Q_1^\top\big\rangle\nonumber\\
    \leq&~ C  (\CE+\|R\|_{L^2}^2).
\end{align}
Using the system (\ref{L Q1})--(\ref{L 1div}), the energy rate $\frac{1}{2}\frac{\ud}{\ud t}(\|\vv_1\|_{L^2}^2+J\|\DP_1^\top\|_{L^2}^2)$ can be estimated as
\begin{align} \label{DP1-Q1-top-v1}J\langle&\partial_t{\DP}_1^\top,\DP_1^\top\rangle+\langle\partial_t{\vv}_1,\vv_1\rangle\nonumber\\
&\leq -\langle\beta_1Q_0(Q_0:\DD_1)+\beta_4\DD_1+\beta_5\DD_1\cdot Q_0+\beta_6Q_0\cdot \DD_1,\nabla \vv_1\rangle\nonumber\\
    &\quad-\langle\beta_7(\DD_1\cdot Q_0^2+Q_0^2\cdot\DD_1),\nabla\vv_1\rangle-\frac{\mu_2}{2}\langle\DP_1^\top-[\BOm_1,Q_0],\nabla\vv_1\rangle\nonumber\\
    &\quad-\mu_1\Big\langle\big[Q_0,(\DP_1^\top-[\BOm_1,Q_0])\big],\nabla\vv_1\Big\rangle-\frac{\mu_2}{2}\langle\DD_1,\DP_1^\top\rangle\nonumber\\
   &\quad-\mu_1\langle\DP_1^\top-[\BOm_1,Q_0],\DP_1^\top\rangle-\langle\CL(Q^\top_1),\DP_1^\top\rangle+C(\|R\|_{H^1}^2+{\CE}+{\CE}^{\frac{1}{2}}{\CF}^{\frac{1}{2}})\nonumber\\
&\eqdefa \sum^7_{k=1}I_k+C(\|R\|_{H^1}^2+{\CE}+{\CE}^{\frac{1}{2}}{\CF}^{\frac{1}{2}}).
\end{align}
We estimate $I_k(k=1,\cdots,7)$ term by term as follows.
Using Lemma \ref{v*Q,L(Q)} and integrating by parts, the term $I_7$ is calculated as
\begin{align*}
    -\langle\CL(Q_1^\top),\DP_1^\top\rangle=&-\Big\langle\CL(Q_1^\top), \partial_t Q_1^\top+\vv_0\cdot\nabla{Q}_1^\top+L(Q_1^\top,\vv_1)+R\Big\rangle \\\leq&-\frac{1}{2}\frac{\ud}{\ud t}\langle\CL(Q_1^\top),Q_1^\top\rangle+C(\|R\|_{H^1}^2+{\CE}+{\CE}^{\frac{1}{2}}{\CF}^{\frac{1}{2}}).
\end{align*}
For the terms $I_1$ and $I_2$, from the relation $\mu_2=\beta_6-\beta_5$ and $Q_0=s(\nn\nn-\frac{1}{3}\II)$, we infer that
\begin{align}
I_1+I_2= & -\left\langle\beta_1 Q_0\left(Q_0: \mathbf{D}_1\right)+\beta_4 \mathbf{D}_1+\frac{\beta_5+\beta_6}{2}\left(Q_0 \cdot \mathbf{D}_1+\mathbf{D}_1 \cdot Q_0\right), \mathbf{D}_1\right\rangle \nonumber\\ & -\left\langle\beta_7\left(\mathbf{D}_1 \cdot Q_0^2+Q_0^2 \cdot \mathbf{D}_1\right), \mathbf{D}_1\right\rangle\nonumber \\ & +\left\langle\left(\frac{\beta_5+\beta_6}{2}-\beta_5\right) \mathbf{D}_1 \cdot Q_0+\left(\frac{\beta_5+\beta_6}{2}-\beta_6\right) Q_0 \cdot \mathbf{D}_1, \mathbf{D}_1+\boldsymbol{\Omega}_1\right\rangle \nonumber\\ = & -\beta_1 s^2\left\|\mathbf{n n}: \mathbf{D}_1\right\|_{L^2}^2-\left(\beta_4-\frac{s\left(\beta_5+\beta_6\right)}{3}+\frac{2}{9} \beta_7 s^2\right)\left\|\mathbf{D}_1\right\|_{L^2}^2\nonumber \\ & -\left(s\left(\beta_5+\beta_6\right)+\frac{2}{3} \beta_7 s^2\right)\left\|\mathbf{n} \cdot \mathbf{D}_1\right\|_{L^2}^2+\underbrace{\frac{\mu_2}{2}\left\langle\left[\mathbf{D}_1, Q_0\right], \boldsymbol{\Omega}_1\right\rangle}_{{I}_1^{\prime}} .
\label{I1+I2}
\end{align}
Armed with the symmetry of $[\BOm_1,Q_0]$, we get
\begin{align*}
I_1^{\prime}+I_3+I_5 =&~\frac{\mu_2}{2}\left\langle\left[\mathbf{D}_1, Q_0\right], \boldsymbol{\Omega}_1\right\rangle-\frac{\mu_2}{2}\big\langle\DP_1^\top-\left[\boldsymbol{\Omega}_1, Q_0\right], \mathbf{D}_1\big\rangle-\frac{\mu_2}{2}\langle\mathbf{D}_1,\DP_1^\top\rangle \\
=&-\mu_2\langle\DP_1^\top-\left[\boldsymbol{\Omega}_1, Q_0\right], \mathbf{D}_1\rangle .
\end{align*}
By a simple calculation, we obtain
\begin{align*}
    I_4+I_6=&-\mu_1\big\langle\big[Q_0,(\DP_1^\top-[\BOm_1,Q_0])\big],\BOm_1\big\rangle-\mu_1\big\langle\DP_1^\top-[\BOm_1,Q_0],\DP_1^\top\big\rangle\\
    =&-\mu_1\big\langle[Q_0,\BOm_1],\DP_1^\top-[\BOm_1,Q_0]\big\rangle-\mu_1\big\langle\DP_1^\top-[\BOm_1,Q_0],\DP_1^\top\big\rangle\\
    =&-\mu_1\|\DP_1^\top-[\BOm_1,Q_0]\|_{L^2}^2
\end{align*}
and
\begin{align*}
    I_1'+I_3+I_4+I_5+I_6=-\mu_1\|\DP_1^\top-[\BOm_1,Q_0]+\frac{\mu_2}{2\mu_1}\DD_1\|_{L^2}^2+\frac{\mu_2^2}{4\mu_1}\|\DD_1\|_{L^2}^2.
\end{align*}
Then, combining the above equalities yields
\begin{align}
   \sum^6_{k=1}I_k=&-{\mu_1}\Big\|\DP_1^\top-[\BOm_1,Q_0]+\frac{\mu_2}{2\mu_1}\DD_1\Big\|^2_{L^2}-{4\delta}\|\DD_1\|^2_{L^2}\nonumber\\
    &-\Big(\overline{\beta}_{1} \left\|\mathbf{n n}: \mathbf{D}_1\right\|_{L^{2}}^{2}+\overline{\beta}_2\left\|\mathbf{D}_1\right\|_{L^{2}}^{2}  +\overline{\beta}_3\left\|\mathbf{n} \cdot \mathbf{D}_1\right\|_{L^{2}}^{2}\Big)
  ,\label{c1}
\end{align}
where the coefficients $\overline{\beta}_i(i=1, 2, 3)$ are given by
\begin{align}\label{relation overlinebeta}
\left\{\begin{array}{l}
\overline{\beta}_{1}= \beta_{1} s^{2},  \quad \overline{\beta}_{2}  = \beta_{4}-4\delta-\frac{s\left(\beta_{5}+\beta_{6}\right)}{3}+\frac{2}{9} \beta_{7} s^{2}-\frac{\mu_{2}^{2}}{4 \mu_{1}},  \\
\overline{\beta}_{3}  = s\left(\beta_{5}+\beta_{6}\right)+\frac{2}{3} \beta_{7} s^{2},
\end{array}\right.
\end{align}
and $\delta>0$ is small enough so that $\overline{\beta}_i(i=1, 2, 3)$ satisfy the relation (\ref{beta rela}). Therefore,  from (\ref{Q1top-L2})--(\ref{DP1-Q1-top-v1}) and the above calculations, we deduce the following estimate:
\begin{align*}
    &\frac{\ud}{\ud t}{\CE(t)}+2\delta\|\nabla\vv_1\|_{L^2}^2+\mu_1\Big\|\DP_1^\top-[\BOm_1,Q_0]+\frac{\mu_2}{2\mu_1}\DD_1\Big\|_{L^2}^2\\
    &\quad\leq C(\|R\|_{H^1}^2+{\CE}+{\CE}^{\frac{1}{2}}{\CF}^{\frac{1}{2}}),
\end{align*}
which implies that
\begin{align*}
\frac{\ud}{\ud t}{\CE(t)}+c_1{\CF(t)}\leq C(\CE(t)+\|R\|_{H^1}^2)\nonumber,
\end{align*}
where the constant $c_1=\min\{\delta,1\}>0$.

Thus, the solution $(\vv_1,Q_1)$ can be uniquely determined. Further, it follows that
\[\DP_1=(\partial_t+\vv_0\cdot\nabla)Q_1+\vv_1\cdot\nabla Q_0 \in L^\infty([0,T];H^{\ell-4}),\quad\dot{\DP}_1\in L^\infty([0,T];H^{\ell-5}).\]
By the equation (\ref{ve 1-Q}), $Q_2^\perp$ is determined by
\begin{align*}
    Q_2^\perp=\CH_\nn^{-1}\Big( \dot{\DP}_1+\mu_1\DP_1+\CL(Q_{1})+\frac{\mu_2}{2}\DD_{1}-\mu_1[\BOm_1,Q_0]-\TT_1\Big) \in L^\infty([0,T];H^{\ell-5}).
\end{align*}
In a similar argument, we can show the existence of $(Q_2,\vv_2)$ and solve $Q_3$ by (\ref{ve 2-Q}) due to $Q_3\in ({\rm Ker}\CH_\nn)^\perp$. Here we omit the details.

\end{proof}

\subsection{Uniform estimates for the remainder system}

The aim of this subsection is to derive the uniform estimate for the remainder system (\ref{eq-QR})--(\ref{eq-vR-div}) with $m=0$. Since the singular term $\ve^{-1}\CH_{\nn}(Q_R)$ in $\ve$ is contained in the remainder system, it is crucial to construct a suitable energy functional, which is defined by (\ref{energyE}).  It is worth emphasizing that the term $M|Q_R|^2$ in (\ref{energyE}) will ensure the positive definiteness of the energy functional, by modulating the positive constant $M$ to be determined later.

For notational simplicity, we use $\fE(t)$ to denote $\fE_0(t)$ in (\ref{energyE}). Based on Proposition \ref{prop Q(1)}, the solutions $(Q_k,\vv_k)(k=0,1,2)$ and $Q_3$ will be handled as known functions. We denote by $C$ a positive constant depending on $\sum_{k=0}^2\|\vv_k\|_{L^\infty([0,T],H^{\ell-4k})}$ and $\sum_{k=0}^3\|Q_k\|_{L^\infty([0,T],H^{\ell+1-4k})}$, and independent of $\ve$ and $M$. The a priori estimate for the remainder $(\DP_R,Q_R,\vv_R)$ is stated as follows.

\begin{proposition}\label{proposition-PP}
Assume that $(\DP_R,Q_R,\vv_R)$ is a smooth solution to the remainder system {\rm (\ref{eq-QR})--(\ref{eq-vR-div})} {\rm (}with $m=0${\rm)} on $[0,T]$. Then for any $t\in[0,T]$, there exists a constant $c_1>0$ such that
\begin{align*}
{\fE}(t)&-3\fE(0)+{c_1}\int_0^t\fF(\tau)\ud \tau\\&\leq CM^2\int_0^t\Big(\big(1+{\fE}(\tau)+\ve^2{\fE}^2(\tau)\big)+\fF(\tau)\big(\ve+\ve{\fE}^{\frac{1}{2}}(\tau)+\ve^4{\fE}(\tau)\big)\Big)\ud \tau.
\end{align*}

\end{proposition}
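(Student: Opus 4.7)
The plan is to differentiate the energy functional $\fE(t)$ in time and substitute the remainder equations \eqref{eq-QR}--\eqref{eq-vR-div} (with $m=0$), piece by piece corresponding to the decomposition $\fE=\fE_0+\fE_1+\fE_2$. For the base piece $\fE_0$, testing \eqref{eq-vR} against $\vv_R$ and \eqref{eq-QR} against $\PP$, and reorganising the combined cross-terms involving $\DD_R$, $\BOm_R$, $[\BOm_R,Q_0]$ and $\PP$ exactly as in the derivation of the basic energy law for the Qian--Sheng system, I expect the viscous contributions to collapse into the nonnegative dissipation rate $\mu_1\|\UU_0\|_{L^2}^2$ together with the $\nabla\vv_R$-quadratic form controlled by $\overline{\beta}_1,\overline{\beta}_2,\overline{\beta}_3$ of \eqref{relation overlinebeta}; Lemma \ref{nn, D} then yields a lower bound of size $c_1 \fF_0(t)$, while the remaining source terms $\FF_R,\GG_R,\GG_R'$ are handled by the auxiliary estimates in Lemmas \ref{norm:FR}--\ref{norm:vR} and produce the right-hand side structure $1+\fE+\ve^2\fE^2+\ve^{1/2}\fE^{1/2}\fF^{1/2}+\cdots$.

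The main obstacle is the singular term $-\tfrac{1}{\ve}\langle\CH_\nn^\ve(Q_R),\PP\rangle$ that appears when testing \eqref{eq-QR} against $\PP$. One cannot simply write it as $-\tfrac{1}{2\ve}\tfrac{d}{dt}\langle\CH_\nn^\ve(Q_R),Q_R\rangle$ because (i) $\PP$ is not $\partial_t Q_R$ but a material derivative along $\tilde\vv$ plus a transport by $\vv_R$, and (ii) $\CH_\nn^\ve$ depends on time through $\nn(t,\xx)$, so time-differentiation of the hessian produces an extra $O(\ve^{-1})$ residue that is \emph{not} under control by $\fE$. This is precisely the place where the key identity referenced as Lemma \ref{A key lemma} is used: it provides a correction $\CA(Q_R,\PP)$ so that
\[
-\tfrac{1}{\ve}\langle\CH_\nn^\ve(Q_R),\PP\rangle\le \frac{d}{dt}\Big(-\tfrac{1}{2\ve}\langle\CH_\nn^\ve(Q_R),Q_R\rangle-\CA(Q_R,\PP)\Big)+\text{controllable terms}.
\]
The role of the extra term $\tfrac{1}{2}\int M|Q_R|^2\,\ud\xx$ inside $\fE$ is then to absorb $|\CA(Q_R,\PP)|$: by choosing $M$ large enough (depending only on bounds already available from Proposition \ref{prop Q(1)}), Young's inequality gives $|\CA(Q_R,\PP)|\le \tfrac12\fE$, which is what allows us to move the correction to the left-hand side and still retain a positive definite energy. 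The factor $M^2$ on the right-hand side of the inequality traces back to this device, since the estimate of $M\langle\partial_t Q_R,Q_R\rangle$ from the $M|Q_R|^2$ piece multiplies the lower-order contributions by a factor proportional to $M$.

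For the pieces $\fE_1,\fE_2$, I will apply $\partial_i$ and $\Delta$ to \eqref{eq-QR}--\eqref{eq-vR-div} and repeat the previous argument with $\UU_1,\UU_2$ playing the role of $\UU_0$. The prefactors $\ve^2$ and $\ve^4$ absorb the loss of one, respectively two derivatives falling on the $\ve$-dependent coefficients; in particular, commutators $[\partial_i,\tilde\vv\cdot\nabla]$ and the symmetric structure of the operator on the left-hand side of \eqref{eq-QR} (which, as emphasised in the introduction after Theorem \ref{main theorem}, lets several top-order derivative terms cancel without the strong assumption $\mu_1\gg J$) are the crucial ingredients that keep the remainder losses at the orders $\ve,\ve^4\fE,\ve\fE^{1/2}$ advertised in the bound.

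Finally, the source terms $\FF_R=\FF_1+\FF_2+\FF_3$ and $\GG_R,\GG_R'$ are split according to their dependence on $(Q_R,\vv_R)$: $\FF_1$ gives the constant $1$ in the RHS once we exploit the $L^\infty_tH^\ell$ bounds on $(Q_k,\vv_k)$ from Proposition \ref{prop Q(1)}; $\FF_2$ is linear in $(Q_R,\vv_R)$ and contributes $\fE$; and $\FF_3$, being quadratic or cubic in $Q_R$ and coupled through $\ve^{2,3,5}$, contributes $\ve^2\fE^2$ after Sobolev embedding $H^2\hookrightarrow L^\infty$ applied to the second-derivative part of $\fE$. Integrating in time from $0$ to $t$ then yields the claimed inequality, with the coefficient $3\fE(0)$ on the left coming from the fact that $\fE$ itself is equivalent (up to the factor $2$ from the $\tfrac12$ and the $\CA$-absorbing step) to the pointwise-in-time energy we actually estimate.
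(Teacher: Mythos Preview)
Your proposal is correct and follows essentially the same approach as the paper: the three-step $L^2/H^1/H^2$ energy estimate obtained by testing \eqref{eq-QR}--\eqref{eq-vR} against $\PP,\vv_R$ and their derivatives, the collapse of the viscous terms into $-c_1\fF_k$ via the $\overline{\beta}_i$-structure and Lemma~\ref{nn, D}, the use of Lemma~\ref{A key lemma} to handle the singular term $-\ve^{-1}\langle\CH_\nn^\ve(Q_R),\PP\rangle$, and the absorption of $|\CA(Q_R,\PP)|$ by choosing $M$ large are exactly the ingredients the paper uses. The only minor imprecision is your explanation of the $M^2$ factor: it does not come directly from $M\langle\dot Q_R,Q_R\rangle$ (which only produces $CM\fE$-type terms), but rather from applying Young's inequality to the mixed terms $CM\fE^{1/2}\fF^{1/2}$ in order to absorb $\fF$ into the dissipation on the left.
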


To prove the Proposition \ref{proposition-PP}, we need the following lemmas.
We first present the following inequality:
\begin{equation}
\|fg\|_{H^k}\leq C\|f\|_{H^2}\|g\|_{H^k},~k=0,1,2,\label{fg Hk}\end{equation}
which will be frequently used.
Armed with (\ref{fg Hk}) and the definitions of $\fE(t)$ and $\fF(t)$, we immediately obtain the following lemma.

\begin{lemma} \label{norm:QR,vR}
There exists a positive constant $C$, such that
\begin{align*}
 \|(\PP,\ve\nabla\PP,\ve^2\nabla^2\PP)\|_{L^2}+\|(\vv_R,\ve\nabla\vv_R,\ve^2\nabla^2\vv_R)\|_{L^2}\leq C{\fE}^{\frac{1}{2}}&,\\
    \|Q_R\|_{H^1}+\|(\ve\nabla^2Q_R,\ve^2\nabla^3Q_R)\|_{L^2}\leq C{\fE}^{\frac{1}{2}}&,\\
    \|(\nabla\vv_R,\ve\nabla^2\vv_R,\ve^2\nabla^3\vv_R)\|_{L^2}\leq C\fF^{\frac{1}{2}}&.
\end{align*}
Further, for $\dot{Q}_R=(\partial_t+\tilde{\vv}\cdot\nabla) Q_R$, it follows that
\begin{align*}
   \|(\dot{Q}_R,\ve\nabla\dot{Q}_R,\ve^2\nabla^2\dot{Q}_R)\|_{L^2}\leq C(\|\ve^k\PP\|_{H^k}+\|\ve^k\vv_R\|_{H^k}\|\nabla Q^\ve\|_{H^2})\leq C({\fE}^{\frac{1}{2}}+\ve{\fE}).
\end{align*}
\end{lemma}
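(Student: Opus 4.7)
The lemma is a bookkeeping statement that reads four families of norm controls off the two functionals $\fE$ and $\fF$, together with the kinematic identity between $\PP$ and $\dot{Q}_R$. My plan is to extract each bound directly from the definitions (\ref{energyE})--(\ref{energyF}), using the coercivity of $\CL$ and the product inequality (\ref{fg Hk}).

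\textbf{Step 1: the $\PP$ and $\vv_R$ norms.} Since $m=0$ and $J>0$ is a fixed constant in this section, $\fE$ directly controls (up to absolute constants) each of $|\PP|^2$, $\ve^2|\partial_i\PP|^2$, $\ve^4|\Delta\PP|^2$, together with $|\vv_R|^2$, $\ve^2|\nabla\vv_R|^2$, $\ve^4|\Delta\vv_R|^2$. The Fourier identity $\|\nabla^2 f\|_{L^2}=\|\Delta f\|_{L^2}$ on $\mathbb{R}^3$ converts the Laplacian weights into full second-gradient weights. The bound $\|(\nabla\vv_R,\ve\nabla^2\vv_R,\ve^2\nabla^3\vv_R)\|_{L^2}\le C\fF^{1/2}$ is then just the definition (\ref{energyF}) after the same Fourier identity (applied now also to $\|\nabla\Delta\vv_R\|_{L^2}=\|\nabla^3\vv_R\|_{L^2}$).

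\textbf{Step 2: the Sobolev control of $Q_R$.} The $L^2$ bound on $Q_R$ comes from the term $\frac{M}{2}|Q_R|^2$ in $\fE$ (this is precisely the purpose for which that term was inserted, since $\CH_\nn(Q_R){:}Q_R$ alone would control only the out-of-kernel part). For the higher-order bounds, split $\frac{1}{\ve}\CH_\nn^\ve(Q_R){:}Q_R=\frac{1}{\ve}\CH_\nn(Q_R){:}Q_R+\CL(Q_R){:}Q_R$, discard the nonnegative first piece by Proposition \ref{linearized-oper-prop}(ii), and use the standard identity $\int\CL(Q){:}Q\,\ud\xx=2\int f_e(\nabla Q)\,\ud\xx\ge 2L_0\|\nabla Q\|_{L^2}^2$ from (\ref{L:postive}). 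This handles $\|\nabla Q_R\|_{L^2}$. Applying the same splitting and coercivity to $\partial_iQ_R$ and $\Delta Q_R$ inside the $\ve^2$- and $\ve^4$-weighted slots of $\fE$ yields $\ve^2\|\nabla^2 Q_R\|_{L^2}^2+\ve^4\|\nabla^3 Q_R\|_{L^2}^2\le C\fE$.

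\textbf{Step 3: the $\dot{Q}_R$ bounds.} By the definition of $\PP$ one has the algebraic identity $\dot{Q}_R=\PP-\vv_R\cdot\nabla Q^\ve$. The product inequality (\ref{fg Hk}) then gives $\|\vv_R\cdot\nabla Q^\ve\|_{H^k}\le C\|\vv_R\|_{H^k}\|\nabla Q^\ve\|_{H^2}$ for $k=0,1,2$, and hence $\|\ve^k\nabla^k(\vv_R\cdot\nabla Q^\ve)\|_{L^2}\le C\|\ve^k\vv_R\|_{H^k}\|\nabla Q^\ve\|_{H^2}$. I would estimate $\|\nabla Q^\ve\|_{H^2}\le \|\nabla\widetilde{Q}\|_{H^2}+\ve^3\|\nabla Q_R\|_{H^2}\le C(1+\ve\fE^{1/2})$: the first summand is bounded uniformly in $\ve$ by Proposition \ref{prop Q(1)}, and in the second each derivative on $Q_R$ costs one factor of $\ve^{-1}$ from Step~2, so the top term $\ve^3\|\nabla^3 Q_R\|_{L^2}$ contributes at most $\ve^3\cdot\ve^{-2}\fE^{1/2}=\ve\fE^{1/2}$. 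Combining with Step~1 to bound $\|\ve^k\PP\|_{H^k}$ by $C\fE^{1/2}$ gives the stated chain $\|(\dot{Q}_R,\ve\nabla\dot{Q}_R,\ve^2\nabla^2\dot{Q}_R)\|_{L^2}\le C(\fE^{1/2}+\ve\fE)$.

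This is an analytic bookkeeping lemma rather than a hard inequality, so there is no serious obstacle. The only subtle point is tracking the $\ve$-powers in the nonlinear piece $\vv_R\cdot\nabla Q^\ve$: once one notices that each derivative on $Q_R$ inside $\fE$ costs exactly one power of $\ve^{-1}$, the weight $\ve^3$ on $\nabla Q_R$ in $Q^\ve$ yields precisely the $\ve\fE$ tail in the final bound.
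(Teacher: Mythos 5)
Your proof is correct and follows exactly the bookkeeping route the paper has in mind (the paper itself omits a proof, declaring the lemma immediate from (\ref{fg Hk}) and the definitions of $\fE,\fF$). Your unpacking of the coercivity — nonnegativity of $\CH_\nn(\cdot){:}\cdot$, the $2L_0\|\nabla\cdot\|_{L^2}^2$ lower bound for $\int\CL(\cdot){:}\cdot$, the Fourier identity relating $\Delta$- and $\nabla^2$-norms, and the $\ve$-power tracking in $\ve^3\|\nabla Q_R\|_{H^2}\lesssim\ve\fE^{1/2}$ — is exactly what the paper leaves implicit.
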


\begin{lemma}\label{norm:FR}
For the remainder term $\FF_R$, there exists a positive constant $C$, such that
    \begin{align*}
        \|(\FF_R,\ve\nabla\FF_R,\ve^2\Delta\FF_R)\|_{L^2}\leq C\Big(1+{\fE}^{\frac{1}{2}}+\ve{\fE}+\ve^3{\fE}^{\frac{3}{2}}+\ve\fF^{\frac{1}{2}}+\ve{\fE}^{\frac{1}{2}}\fF^{\frac{1}{2}}
    \Big).
    \end{align*}
\end{lemma}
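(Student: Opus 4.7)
The plan is to estimate each summand of the decomposition $\FF_R=\FF_1+\FF_2+\FF_3$ separately, according to its dependence on the remainders, and then to sum the resulting bounds to match the six terms on the right-hand side of the stated inequality.

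\textbf{Step 1 (the source term $\FF_1$).} The term $\FF_1$ involves only the profiles $(Q_k,\vv_k)$ together with their first and second time derivatives, spatial derivatives, the operator $\CL(Q_3)$, and contractions of velocity with gradient of $Q$. With $m=0$ every exponent $\ve^i,\ve^{i+j},\ve^{i+j+k}$ in the defining sums is nonnegative (indeed $\ge 0$ after the prefactor $\ve^{m-3}=\ve^{-3}$ is absorbed by the constraints $i+m\ge 3$, etc.), so no singular $\ve^{-1}$ factor appears. Using the bounds of Proposition~\ref{prop Q(1)} (which give at least $H^{\ell-11}\hookrightarrow H^{9}$ regularity for the highest profile $Q_3$), one concludes $\|\FF_1\|_{H^2}\le C$, contributing the constant $1$ to the bound.

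\textbf{Step 2 (the linear term $\FF_2$).} Each summand is a product of a bounded coefficient, built from $\widetilde{Q}$, $\widehat{Q}^\ve$, $\widetilde{\BOm}$, $\partial_t\widetilde{Q}$ and $\tilde{\vv}\cdot\nabla\widetilde{Q}$, all controlled uniformly in $H^{\ell-8}$ by Proposition~\ref{prop Q(1)}, multiplied by one factor from $Q_R$, $\nabla Q_R$, $\vv_R$, or $\nabla\vv_R$. Using the Moser inequality \eqref{fg Hk} together with Lemma~\ref{norm:QR,vR}, the terms $\mu_1[\widetilde{\BOm},Q_R]$, $b\,\CB(\widehat{Q}^\ve,Q_R)$, $2c\,\CC(Q_R,\widehat{Q}^\ve,Q_0)$, $c\ve\,\CC(Q_R,\widehat{Q}^\ve,\widehat{Q}^\ve)$ and $\ve^m J\vv_R\cdot\nabla(\partial_t\widetilde{Q}+\tilde{\vv}\cdot\nabla\widetilde{Q})$ each contribute at most $C\fE^{1/2}$ to $\|\FF_2\|_{L^2}$, and (after distributing one or two derivatives, each extra $\nabla$ being absorbed by an $\ve$ from the prefactor) likewise to $\ve\|\nabla\FF_2\|_{L^2}$ and $\ve^2\|\Delta\FF_2\|_{L^2}$. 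The single remaining term $\ve\mu_1[\BOm_R,\widehat{Q}^\ve]$ carries a factor $\nabla\vv_R$ rather than $Q_R$ or $\vv_R$, so by $\|\nabla\vv_R\|_{L^2}\le\fF^{1/2}$ it produces the summand $C\ve\fF^{1/2}$.

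\textbf{Step 3 (the nonlinear term $\FF_3$).} Every summand carries a prefactor of at least $\ve^2$, $\ve^3$, or $\ve^5$. For the quadratic contributions $\ve^2\,\CB(Q_R,Q_R)$ and $\ve^2\,\CC(Q_R,Q_R,\widetilde{Q})$, the 3D embeddings $\|Q_R\|_{L^4}^{2}\le C\|Q_R\|_{H^1}^{2}\le C\fE$ yield a $C\ve^2\fE\le C\ve\fE$ bound. For the cubic term, $\|Q_R\|_{L^6}^{3}\le C\|Q_R\|_{H^1}^{3}\le C\fE^{3/2}$ gives $\|\ve^5\CC(Q_R,Q_R,Q_R)\|_{L^2}\le C\ve^3\fE^{3/2}$. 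For the commutator $\ve^3\mu_1[\BOm_R,Q_R]$ we use $\|\BOm_R Q_R\|_{L^2}\le \|\nabla\vv_R\|_{L^2}\|Q_R\|_{L^\infty}$ together with the Sobolev embedding $H^2\hookrightarrow L^\infty$ and the scaled bound $\|Q_R\|_{H^2}\le C\ve^{-1}\fE^{1/2}$ from Lemma~\ref{norm:QR,vR}, obtaining $\le C\ve^2\fE^{1/2}\fF^{1/2}\le C\ve\fE^{1/2}\fF^{1/2}$. For $\ve\nabla\FF_3$ and $\ve^2\Delta\FF_3$, the product inequality \eqref{fg Hk} redistributes derivatives, each additional $\nabla$ acting on a $Q_R/\vv_R$ factor being compensated by the $\ve$ attached to it via the definitions of $\fE$ and $\fF$.

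\textbf{Main obstacle.} The most delicate point is the bookkeeping for $\ve^3\mu_1[\BOm_R,Q_R]$ at the highest derivative level ($\ve^2\Delta$), because derivatives there must be shared between $\nabla\vv_R$ (whose $\dot H^k$-norm scales like $\ve^{-k}\fF^{1/2}$) and $Q_R$ (scaling like $\ve^{1-k}\fE^{1/2}$); any careless split loses an $\ve$ and fails to fit inside $C\ve\fE^{1/2}\fF^{1/2}$. The resolution is to always pair a top-order derivative with the factor whose weighted bound has the larger $\ve$-power, so that the $\ve^3$ prefactor combined with the two $\ve$'s from $\ve^2\Delta$ exactly covers the two $\ve^{-k}$ losses and leaves a single net $\ve$ in front, as required.
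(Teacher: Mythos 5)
Your proposal is correct and follows essentially the same route as the paper: decompose $\FF_R=\FF_1+\FF_2+\FF_3$, bound $\FF_1$ by a constant via the regularity from Proposition \ref{prop Q(1)}, bound $\FF_2$ by $C(\fE^{1/2}+\ve\fF^{1/2})$ using Lemma \ref{norm:QR,vR} and the product inequality (\ref{fg Hk}), and bound $\FF_3$ by $C(\ve\fE+\ve^3\fE^{3/2}+\ve\fE^{1/2}\fF^{1/2})$. The only cosmetic difference is that for the $L^2$-level estimates on $\FF_3$ you invoke $L^4$/$L^6$ Sobolev embeddings, whereas the paper uniformly applies the Moser-type estimate $\|\ve^k\FF_3\|_{H^k}\leq C\ve\|\ve^kQ_R\|_{H^k}\big(\ve\|Q_R\|_{H^2}+\ve^4\|Q_R\|_{H^2}^2+\ve^2\|\BOm_R\|_{H^2}\big)$ across all $k$; both yield the stated bound.
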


\begin{proof}

Applying Lemma \ref{norm:QR,vR} and (\ref{fg Hk}), we infer that
    \begin{align*}
       &\|(\FF_1,\ve\nabla\FF_1,\ve^2\Delta\FF_1)\|_{L^2}\leq C,\\
        &\|(\FF_2,\ve\nabla\FF_2,\ve^2\Delta\FF_2)\|_{L^2}\leq C({\fE}^{\frac{1}{2}}+\ve\fF^{\frac{1}{2}}).
    \end{align*}
By a direct calculation, we have
    \begin{align*}
         \|\ve^k\FF_3\|_{H^k}&\leq C\ve\|\ve^kQ_R\|_{H^k}(\ve\| Q_R\|_{H^2}+\ve^4\|Q_R\|_{H^2}^2+\ve^2\|\BOm_R\|_{H^2})\\&\leq C(\ve{\fE}+\ve^3{\fE}^{\frac{3}{2}}+\ve{\fE}^{\frac{1}{2}}\fF^{\frac{1}{2}}),
    \end{align*}
which together with the above two inequalities yields the completion of the lemma.
\end{proof}

\begin{lemma}\label{norm:vR}
For the remainder terms $\GG_R$ and $\GG_R'$, there exists a positive constant $C$, such that
    \begin{align*}
\|(\GG_R,\ve\nabla\GG_R,\ve^2\Delta\GG_R)\|_{L^2}\leq&~ C(1+{\fE}^{\frac{1}{2}}+\ve^2{\fE}+\ve\fF^{\frac{1}{2}}+\ve^2{\fE}^{\frac{1}{2}}\fF^{\frac{1}{2}}+\ve^4{\fE}\fF^{\frac{1}{2}}),\\
\|(\GG_R',\ve\nabla\GG_R',\ve^2\Delta\GG_R')\|_{L^2}\leq& ~C(1+{\fE}^{\frac{1}{2}}+\fF^{\frac{1}{2}}+\ve{\fE}^{\frac{1}{2}}\fF^{\frac{1}{2}}).
    \end{align*}
\end{lemma}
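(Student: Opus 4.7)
\bigskip

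\noindent\textbf{Proof proposal.} The plan is to follow exactly the same template used to prove Lemma \ref{norm:FR}: decompose each of $\GG_R$ and $\GG_R'$ into its constituent pieces according to how it depends on the remainder $(Q_R,\PP,\vv_R)$, and then bound each piece using the Moser-type product inequality \eqref{fg Hk} together with the Sobolev bounds collected in Lemma \ref{norm:QR,vR} and the a priori $L^\infty([0,T];H^{\ell-4k})$ bounds on $(Q_k,\vv_k)$ provided by Proposition \ref{prop Q(1)}.

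For $\GG_R=\GG_1+\GG_2+\GG_3$ I would treat the three pieces separately. First, $\GG_1$ depends only on the known background fields $(Q_k,\vv_k)_{k\le 3}$, so \eqref{fg Hk} and Proposition \ref{prop Q(1)} immediately give $\|(\GG_1,\ve\nabla\GG_1,\ve^2\Delta\GG_1)\|_{L^2}\le C$, which contributes the constant term. Second, $\GG_2$ is linear in the remainders; each summand has the schematic form (background)$\cdot Q_R$, (background)$\cdot\DD_R$, (background)$\cdot\PP$, or (background)$\cdot\BOm_R$, possibly with an explicit $\ve$ prefactor when $\DD_R$ or $\BOm_R$ appears at leading order. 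Using $\|Q_R\|_{H^k}$ controlled by $\fE^{1/2}$ with the right $\ve$-weight, $\|\PP\|_{H^k}$ controlled by $\fE^{1/2}$, and $\|\ve^k\nabla\DD_R\|_{L^2}$ controlled by $\fF^{1/2}$, one obtains $\|(\GG_2,\ve\nabla\GG_2,\ve^2\Delta\GG_2)\|_{L^2}\le C(\fE^{1/2}+\ve\fF^{1/2})$. Third, $\GG_3$ carries an overall factor $\ve^3$, and is quadratic or cubic in the remainders; using $\|Q_R\|_{H^2}\le C\ve^{-1}\fE^{1/2}$ and $\|\DD_R\|_{L^2}\le C\fF^{1/2}$, a term like $\ve^3\widetilde{Q}(Q_R:\DD_R)$ yields $C\ve^2\fE^{1/2}\fF^{1/2}$, a term like $\ve^3[Q_R,\PP]$ yields $C\ve^2\fE$, and the cubic $\ve^6 Q_R(Q_R:\DD_R)$ yields $C\ve^4\fE\fF^{1/2}$. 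Summing these recovers exactly the claimed bound on $\GG_R$.

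For $\GG_R'$ the five summands are estimated directly. The pair $\vv_1\cdot\nabla\vv_2$ and $\vv_2\cdot\nabla(\vv_1+\ve\vv_2)$ contribute the constant; the term $\vv_R\cdot\nabla\tilde{\vv}$ is controlled by $\fE^{1/2}$ in $L^2$ through the $L^\infty$ bound on $\nabla\tilde{\vv}$; the term $\tilde{\vv}\cdot\nabla\vv_R$ is controlled by $\fF^{1/2}$; and the nonlinear $\ve^3\vv_R\cdot\nabla\vv_R$ is bounded by $C\ve^3\|\vv_R\|_{H^2}\|\nabla\vv_R\|_{L^2}\le C\ve\fE^{1/2}\fF^{1/2}$. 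Differentiating once and twice in $\xx$ and multiplying by $\ve$ and $\ve^2$ respectively, and using \eqref{fg Hk} again, one verifies that no new worst-case combination appears, so the claimed estimate for $\GG_R'$ holds.

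The main obstacle, as in Lemma \ref{norm:FR}, is purely bookkeeping: one must keep careful track of how many $\ve$'s are absorbed by each spatial derivative in the definitions \eqref{energyE}--\eqref{energyF}, and in particular ensure that whenever a factor $\|Q_R\|_{H^2}$ is inserted via \eqref{fg Hk} (which costs a factor $\ve^{-1}$) the prefactor $\ve^3$ in $\GG_3$ or $\ve^3$ in the last term of $\GG_R'$ is more than enough to recover the right power of $\ve$ shown in the statement. Once this bookkeeping is done term-by-term on the explicit expressions for $\GG_1,\GG_2,\GG_3$ and $\GG_R'$ given in Section \ref{Hilbert-expansion}, the lemma follows.
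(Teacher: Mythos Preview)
Your proposal is correct and follows essentially the same approach as the paper: decompose $\GG_R=\GG_1+\GG_2+\GG_3$, bound $\GG_1$ by a constant and $\GG_2$ by $C(\fE^{1/2}+\ve\fF^{1/2})$ using Lemma \ref{norm:QR,vR}, and then handle the $\ve^3$-weighted quadratic/cubic terms in $\GG_3$ and the five summands of $\GG_R'$ term by term via \eqref{fg Hk}. The only item you did not single out explicitly is the $\ve^6\sigma^d(Q_R,Q_R)$ contribution inside $\GG_3$, which the paper bounds by $\ve^6\|\nabla Q_R\|_{H^2}\|\ve^k\nabla Q_R\|_{H^k}\le C\ve^2\fE$; this falls under your ``bookkeeping'' remark and does not affect the final estimate.
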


\begin{proof}
We derive from Lemma \ref{norm:QR,vR} that
    \begin{align*}        \|(\GG_1,\ve\nabla\GG_1,\ve^2\Delta\GG_1)\|_{L^2}\leq&~C,\\\|(\GG_2,\ve\nabla\GG_2,\ve^2\Delta\GG_2)\|_{L^2}\leq&~C({\fE}^{\frac{1}{2}}+\ve\fF^{\frac{1}{2}}).
    \end{align*}
    By the inequality (\ref{fg Hk}), we obtain
    \begin{align*}
           \|\ve^k\GG_3\|_{H^k}\leq&~C\ve^3\|Q_R\|_{H^2}\Big(\|\ve^k\nabla\vv_R\|_{H^k}+\|\ve^kQ_R\|_{H^k}+\ve^3\|Q_R\|_{H^2}\|\ve^k\nabla\vv_R\|_{H^k}\\
           &+\|\ve^k\PP\|_{H^k}\Big)+\ve^6\|\nabla Q_R\|_{H^2}\|\ve^k\nabla Q_R\|_{H^k}\\\leq&~C\ve^2{\fE}^{\frac{1}{2}}(\fF^{\frac{1}{2}}+\ve^2{\fE}^{\frac{1}{2}}\fF^{\frac{1}{2}}+{\fE}^{\frac{1}{2}}),\\
\|\ve^k\GG_R'\|_{H^k}\leq&~C(1+\|\ve^k\vv_R\|_{H^k}+\|\ve^k\nabla\vv_R\|_{H^k}+\ve\|\ve^k\vv_R\|_{H^k}\|\ve^2\nabla\vv_R\|_{H^2})\\
        \leq&~C(1+{\fE}^{\frac{1}{2}}+\fF^{\frac{1}{2}}+\ve{\fE}^{\frac{1}{2}}\fF^{\frac{1}{2}}).
    \end{align*}
Consequently, combining the above estimates leads to the proof of the lemma.
\end{proof}

To ensure the closure of energy estimates, we need to control the singular term $\ve^{-1}\langle\CH_{\nn}^\ve({Q}_R),\PP\rangle$. However, the linearized operator $\CH^{\ve}_{\nn}$ depends on $t$, and its time derivative will generate the difficult term contained in the energy. To overcome this difficulty, the key estimates are given as follows.

\begin{lemma}[A key lemma]\label{A key lemma}
Let $\PP=\dot{Q}_R+\vv_R\cdot\nabla Q^{\ve}$ with $\dot{Q}_R=(\partial_t+\tilde{\vv}\cdot\nabla)Q_R$.
Assume that $(\vv_R,Q_R,\PP)$ is a smooth solution to the remainder system {\rm (\ref{eq-QR})--(\ref{eq-vR-div})} with $m=0$. Then there exists a positive constant $C$ depending on $\nn,\nabla_{t,x}\nn,\tilde{\vv}$ and $\widetilde{Q}$, such that
\begin{align}
- \frac{1}{\ve}\langle\CH_{\nn}^\ve({Q}_R),\PP\rangle\leq &~\frac{\ud}{\ud t}\Big(-\frac{1}{2\ve}\langle\CH_\nn^\ve(Q_R),Q_R\rangle-\CA(Q_R,\PP)\Big)\nonumber\\&+C({\fE}+\ve{\fE}^{\frac{3}{2}}+{\fE}^{\frac{1}{2}}\fF^{\frac{1}{2}}+\ve{\fE}\fF^{\frac{1}{2}}+{\fE}^{\frac{1}{2}}\|\FF_R\|_{L^2}),\label{CH,Q0}\\
    -\ve\langle\partial_i\CH_{\nn}^{\ve}(Q_R),\partial_i\PP\rangle
    \leq&-\frac{\ve}{2}\frac{\ud}{\ud t}\big\langle\CH_{\nn}^\ve(\partial_iQ_R),\partial_i{Q}_R\big\rangle+C({\fE}+\ve{\fE}^{\frac{3}{2}}+{\fE}^{\frac{1}{2}}\fF^{\frac{1}{2}}+\ve{\fE}\fF^{\frac{1}{2}}),\label{CH,Q1}\\
     -\ve^3\langle\Delta\CH_{\nn}^{\ve}(Q_R),\Delta\PP\rangle
     \leq&-\frac{\ve^3}{2}\frac{\ud}{\ud t}\big\langle\CH_{\nn}^\ve(\Delta Q_R),\Delta Q_R\big\rangle+C\big({\fE}+\ve{\fE}^{\frac{3}{2}}+{\fE}^{\frac{1}{2}}\fF^{\frac{1}{2}}+\ve{\fE}\fF^{\frac{1}{2}}\big),\label{CH,Q2}
\end{align}
where
\begin{align}\label{CA-t}
  \CA(Q_R,\PP)\eqdefa J\Big\langle\CH_\nn^{-1}\Big(2bs\dot{\overline{\nn\nn}}\cdot Q^\top_R-2cs^2(\dot{\overline{\nn\nn}}:Q_R^\top) \Big(\nn\nn-\frac{1}{3}\II\Big)\Big),\PP\Big\rangle,
\end{align}
 and $\dot{\overline{\nn\nn}}=(\partial_t+\tilde{\vv}\cdot\nabla)(\nn\nn)$, and $\CH^{-1}_{\nn}$ is defined by {\em (\ref{HM-inverse})} and $\FF_R$ is the remainder term given by {\em (\ref{FFR-remaider-term})}.
\end{lemma}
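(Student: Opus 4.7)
The plan is to attack all three estimates by first decomposing $\PP = \dot{Q}_R + \vv_R\cdot\nabla Q^\ve$ with $\dot{Q}_R = (\partial_t + \tilde{\vv}\cdot\nabla)Q_R$, and then exploiting the $L^2$-symmetry of $\CH_\nn^\ve$ together with $\nabla\cdot\tilde{\vv}=0$ to convert the $\dot{Q}_R$-piece into a total time derivative. Concretely, integration by parts gives
\begin{equation*}
-\frac{1}{\ve}\langle\CH_\nn^\ve(Q_R),\dot{Q}_R\rangle
= -\frac{1}{2\ve}\frac{d}{dt}\langle\CH_\nn^\ve(Q_R),Q_R\rangle
+\frac{1}{2\ve}\langle\dot{\CH}_\nn^\ve(Q_R),Q_R\rangle,
\end{equation*}
where $\dot{\CH}_\nn^\ve$ denotes the operator obtained by differentiating the $\nn$-dependence of $\CH_\nn^\ve$ along $\partial_t+\tilde{\vv}\cdot\nabla$; analogous identities hold after applying $\partial_i$ and $\Delta$. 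Because the energy assigns weights $1$, $\ve^2$, $\ve^4$ to the three levels, the residual ``bad commutator'' is of respective sizes $\ve^{-1}\|Q_R\|_{L^2}^2$, $\ve\|\nabla Q_R\|_{L^2}^2$, $\ve^3\|\Delta Q_R\|_{L^2}^2$: singular in $\ve$ only at the zeroth level, which is precisely why the correction $\CA$ is needed only for \eqref{CH,Q0}.

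For the transport piece $-\ve^{-1}\langle\CH_\nn^\ve(Q_R),\vv_R\cdot\nabla Q^\ve\rangle$, write $Q^\ve = Q_0 + \ve\widehat{Q}^\ve + \ve^3 Q_R$. The leading contribution vanishes identically: since $|\nn|=1$ forces $\nabla\nn\perp\nn$, $\nabla Q_0$ lies pointwise in $\mathrm{Ker}\,\CH_\nn$, so $\vv_R\cdot\nabla Q_0\in\mathrm{Ker}\,\CH_\nn$ as well, and Proposition \ref{linearized-oper-prop}(i) gives $\langle\CH_\nn(Q_R),\vv_R\cdot\nabla Q_0\rangle\equiv 0$. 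The remaining $O(\ve)$ and higher pieces, the $\ve\CL$-component of $\CH_\nn^\ve$, and the commutators $[\partial_i,\CH_\nn^\ve]$, $[\Delta,\CH_\nn^\ve]$ (which carry factors of $\partial\nn$, $\partial^2\nn$ but no $1/\ve$) are bounded by the RHS via Lemma \ref{norm:QR,vR}.

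To absorb the singular commutator in \eqref{CH,Q0}, differentiate $\CA$ using $\nabla\cdot\vv^\ve=0$ and substitute the $\PP$-equation \eqref{eq-QR} with $m=0$, $J\dot{\PP} = -\ve^{-1}\CH_\nn^\ve(Q_R) - \tfrac{\mu_2}{2}\DD_R - \mu_1(\PP-[\BOm_R,Q_0]) + \FF_R$. This yields
\begin{equation*}
\frac{d}{dt}\CA(Q_R,\PP) = \langle\CH_\nn^{-1}(B), J\dot{\PP}\rangle + J\langle(\partial_t+\vv^\ve\cdot\nabla)\CH_\nn^{-1}(B),\PP\rangle,
\end{equation*}
whose leading contribution is $-\ve^{-1}\langle\CH_\nn^{-1}(B),\CH_\nn(Q_R)\rangle = -\ve^{-1}\langle B,Q_R\rangle + O(1)$ by symmetry of $\CH_\nn$ (using that $\nn\nn-\tfrac{1}{3}\II$ sits in $(\mathrm{Ker}\,\CH_\nn)^\perp$, and interpreting $\dot{\overline{\nn\nn}}\cdot Q_R^\top$ via its symmetric-traceless projection). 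A direct computation from \eqref{CH-nn}, using the identity $(M\cdot Q_R^\top):Q_R^\top = (Q_R^\top\cdot M):Q_R^\top$ for symmetric $M,Q_R^\top$ and the facts $Q_R^\top:\nn\nn = Q_R^\top:\II = 0$, reduces the singular part of $\tfrac{1}{2\ve}\langle\dot{\CH}_\nn(Q_R),Q_R\rangle$ to a multiple of $\ve^{-1}\int(\dot{\overline{\nn\nn}}\cdot Q_R^\top):Q_R^\top\,d\xx$; the coefficients $2bs$ and $-2cs^2$ in \eqref{CA-t} are chosen so that $-\ve^{-1}\langle B,Q_R\rangle$ matches this expression with opposite sign, giving the desired cancellation. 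The subleading pieces of $J\dot{\PP}$ pair with $\CH_\nn^{-1}(B)$, whose $L^2$-norm is $O(\|Q_R^\top\|_{L^2})$, producing the RHS contributions $\fE$, $\ve{\fE}^{3/2}$, $\fE^{1/2}\fF^{1/2}$, $\ve\fE\fF^{1/2}$, and $\fE^{1/2}\|\FF_R\|_{L^2}$. For \eqref{CH,Q1}--\eqref{CH,Q2}, no $\CA$-analog is required: the bad commutators $\tfrac{\ve}{2}\langle\dot{\CH}_\nn^\ve(\partial_iQ_R),\partial_iQ_R\rangle$ and $\tfrac{\ve^3}{2}\langle\dot{\CH}_\nn^\ve(\Delta Q_R),\Delta Q_R\rangle$ are directly of size $C\fE$ by Lemma \ref{norm:QR,vR}, and the transport/commutator terms are handled by the same kernel orthogonality and Lemma \ref{norm:QR,vR}.

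\textbf{Main obstacle.} The technical heart is the coefficient-level cancellation in \eqref{CH,Q0}: one must verify that the specific constants in $B$ align exactly with the explicit form of $\dot{\CH}_\nn$ so that the singular contributions cancel modulo $O(\fE)$. Equally delicate is the bookkeeping for residuals--those from the second term of $\tfrac{d}{dt}\CA$ (which involves transport of $\CH_\nn^{-1}$ and of $B$ itself), from the $\ve\CL$-piece of $\CH_\nn^\ve$, and from the $\ve^3\vv_R$ discrepancy between $\vv^\ve$ and $\tilde{\vv}$--each of which must be placed into one of the five categories allowed by the RHS, using Lemmas \ref{norm:QR,vR}, \ref{norm:FR}, and \ref{norm:vR}.
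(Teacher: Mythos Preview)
Your overall strategy matches the paper's proof exactly: split $\PP=\dot Q_R+\vv_R\cdot\nabla Q^\ve$, use the kernel orthogonality $\vv_R\cdot\nabla Q_0\in\mathrm{Ker}\,\CH_\nn$ to kill the leading transport singularity, integrate by parts to extract $-\tfrac{1}{2\ve}\tfrac{d}{dt}\langle\CH_\nn^\ve(Q_R),Q_R\rangle$, and absorb the dangerous commutator at level zero via $\CA$ by substituting the $\PP$-equation. You are also right that \eqref{CH,Q1}--\eqref{CH,Q2} need no correction because the extra $\ve$, $\ve^3$ make the commutators harmless.

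There is, however, a real mis-identification in your analysis of the singular commutator. The quantity $\ve^{-1}\int(\dot{\overline{\nn\nn}}\cdot Q_R^\top):Q_R^\top\,d\xx$ that you single out is identically zero: writing $Q_R^\top=\nn\mm+\mm\nn$ with $\mm\perp\nn$ and $\dot{\overline{\nn\nn}}=\dot\nn\nn+\nn\dot\nn$ with $\dot\nn\perp\nn$, a short computation gives $(Q_R^\top)^2=|\mm|^2\nn\nn+\mm\mm$ and then $\dot{\overline{\nn\nn}}:(Q_R^\top)^2=0$. In fact the entire $\top$--$\top$ part of $\tfrac{1}{2\ve}\langle[\partial_t+\tilde\vv\cdot\nabla,\CH_\nn]Q_R,Q_R\rangle$ vanishes. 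What survives, and what is genuinely singular, is the \emph{cross} term
\[
\frac{1}{\ve}\big\langle \mathbf A(Q_R^\top),\,Q_R^\perp\big\rangle,
\qquad \mathbf A(Q)=2bs\,\dot{\overline{\nn\nn}}\cdot Q-2cs^2(\dot{\overline{\nn\nn}}:Q)\Big(\nn\nn-\tfrac13\II\Big),
\]
together with a $\perp$--$\perp$ piece $-\tfrac{1}{\ve}\langle bs\,\dot{\overline{\nn\nn}}\cdot Q_R^\perp,Q_R^\perp\rangle$ that is already $O(\fE)$ because $\ve^{-1}\|Q_R^\perp\|_{L^2}^2\le C\fE$ by Proposition~\ref{linearized-oper-prop}(ii). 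Your own computation of $\tfrac{d}{dt}\CA$ actually produces exactly this cross term: since $\CH_\nn(Q_R)=\CH_\nn(Q_R^\perp)$, one has $\langle\CH_\nn^{-1}(\mathbf A(Q_R^\top)),\CH_\nn(Q_R)\rangle=\langle\mathbf A(Q_R^\top),Q_R^\perp\rangle$, not $\langle\mathbf A(Q_R^\top),Q_R\rangle$. So the cancellation you need is $\top$--$\perp$, not $\top$--$\top$, and the coefficients $2bs$, $-2cs^2$ in $\mathbf A$ are dictated by matching the cross term, not by the (vanishing) $\top$--$\top$ contribution. Once this is corrected, your argument and the paper's coincide.
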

\begin{proof}
    We only provide here the arguments of (\ref{CH,Q0}). The proof of (\ref{CH,Q1})--(\ref{CH,Q2}) will be left in the Appendix \ref{CH,vv*Q}.
According to the definition of $\PP$, we get
\begin{align}\label{CHve-nn-PP}
    \Big\langle\frac{1}{\ve}\CH_{\nn}^\ve({Q}_R),\PP\Big\rangle=\Big\langle\frac{1}{\ve}\CH_{\nn}^\ve({Q}_R),\Dot{Q}_R+\vv_R\cdot\nabla Q^\ve\Big\rangle.
\end{align}
 Armed with the fact $\vv_R\cdot\nabla Q_0\in \text{Ker}{\CH_\nn}$ and Lemma \ref{v*Q,L(Q)}, we deduce that
 \begin{align}
    -\Big\langle\vv_R\cdot\nabla Q^\ve, \frac{1}{\ve}\CH_\nn^\ve(Q_R)\Big\rangle=& -\langle\vv_R\cdot\nabla(\ve^2Q_R+ \widehat{Q}^\ve), \CH_\nn(Q_R)\rangle-\langle\vv_R\cdot\nabla Q^\ve, \CL(Q_R)\rangle\nonumber\\
    \leq&~C\|Q_R\|_{L^2}(\|\vv_R\|_{L^2}+\|\nabla Q_R\|_{L^2}\|\ve^2\vv_R\|_{H^2})\nonumber\\&+C(\ve^3\|\nabla\vv_R\|_{H^2}\|Q_R\|_{H^1}^2+\|\vv_R\|_{H^1}\|\nabla Q_R\|_{L^2})\nonumber\\
    \leq&~C({\fE}+\ve^2{\fE}^{\frac{3}{2}}+{\fE}^{\frac{1}{2}}\fF^{\frac{1}{2}}+\ve{\fE}\fF^{\frac{1}{2}}),\label{vQ1}
\end{align}
where $\widehat{Q}^\ve=Q_1+\ve Q_2+\ve^2Q_3$. We assume the decomposition $Q_R=Q^{\top}_R+Q^{\perp}_R$ with $Q^{\top}_R\in \text{Ker}{\CH_\nn}$ and $Q^{\perp}_R\in (\text{Ker}{\CH_\nn})^{\perp}$.
From the definition of $\CH_{\nn}$ in (\ref{CH-nn}) and Lemma \ref{v*Q,L(Q)}, we derive that
\begin{align}
  -\Big\langle \frac{1}{\ve}\CH_\nn^\ve(Q_R),\Dot{Q}_R\Big\rangle=&-\frac{1}{2\ve}\frac{\ud }{\ud t}\langle\CH_\nn^\ve(Q_R),{Q}_R\rangle+\frac{1}{2\ve}\langle[\partial_t+\Tilde{\vv}\cdot\nabla,\CH_\nn]Q_R,Q_R\rangle\nonumber\\&-\langle\CL(Q_R),\tilde{\vv}\cdot\nabla Q_R\rangle\nonumber\\=&-\frac{1}{2\ve}\frac{\ud }{\ud t}\langle\CH_\nn^\ve(Q_R),{Q}_R\rangle+\Big\langle \frac{1}{\ve}\mathbf{A}(Q_R^\top),Q_R^\perp\Big\rangle\nonumber\\
  &-\frac{1}{\ve}\langle bs\dot{\overline{\nn\nn}}\cdot Q_R^\perp,Q_R^\perp\rangle-\langle\CL(Q_R),\tilde{\vv}\cdot\nabla Q_R\rangle\nonumber\\
  \leq&-\frac{1}{2\ve}\frac{\ud }{\ud t}\langle\CH_\nn^\ve(Q_R),{Q}_R\rangle+ \frac{1}{\ve}\langle \mathbf{A}(Q_R^\top),Q_R^\perp\rangle+C\fE,\label{Q,CH1}
\end{align}
where $\mathbf{A}(Q)\eqdefa 2bs \dot{\overline{\nn\nn}} \cdot Q-2cs^2(\dot{\overline{\nn\nn}}: Q)(\mathbf{n n}-\frac{1}{3}\II)$.
Using Proposition \ref{linearized-oper-prop} and the equation (\ref{eq-QR}), it follows that
\begin{align}
    &\frac{1}{\ve}\langle \mathbf{A}(Q_R^\top),Q_R^\perp\rangle=\frac{1}{\ve}\langle\CH_\nn^{-1}(\mathbf{A}(Q_R^\top)),\CH_\nn(Q_R)\rangle\nonumber\\
    &\quad=-\Big\langle\CH_\nn^{-1}(\mathbf{A}(Q_R^\top)),J(\partial_t+\vv^\ve\cdot\nabla)\PP+{\mu_1}\underbrace{(\PP-[\BOm_R,Q_0]+\frac{\mu_2}{2\mu_1}\DD_R)}_{{\UU}_0}\Big\rangle\nonumber\\
    &\qquad-\Big\langle\CH_\nn^{-1}(\mathbf{A}(Q_R^\top)),\CL(Q_R)-\FF_R\Big\rangle\nonumber\\
    &\quad=-\frac{\ud}{\ud t}\underbrace{{\langle\CH_\nn^{-1}(\mathbf{A}(Q_R^\top)),J\PP\rangle}}_{\CA(Q_R,\PP)}+J\langle(\partial_t+{\vv}^\ve\cdot\nabla)\CH_\nn^{-1}(\mathbf{A}(Q_R^\top)),\PP\rangle\nonumber\\
    &\qquad-\Big\langle\CH_\nn^{-1}(\mathbf{A}(Q_R^\top)),\mu_1{\UU}_0+\CL(Q_R)-\FF_R\Big\rangle\nonumber\\
    &\quad\leq-\frac{\ud}{\ud t}\CA(Q_R,\PP)+C\|\PP\|_{L^2}\big(\|Q_R\|_{L^2}+\|\dot{Q}_R\|_{L^2}+\ve^3\|\vv_R\|_{H^2}\|Q_R\|_{H^1}\big)\nonumber\\
    &\qquad+C\|Q_R\|_{H^1}\big(\|{\UU}_0\|_{L^2}+\|\nabla Q_R\|_{L^2}+\|\FF_R\|_{L^2}\big)\nonumber\\
    &\quad\leq-\frac{\ud}{\ud t}\CA(Q_R,\PP)+C({\fE}+\ve{\fE}^{\frac{3}{2}}+{\fE}^{\frac{1}{2}}\fF^{\frac{1}{2}}+\ve{\fE}\fF^{\frac{1}{2}}+{\fE}^{\frac{1}{2}}\|\FF_R\|_{L^2}).\label{1HQ1}
\end{align}
From (\ref{CHve-nn-PP})--(\ref{1HQ1}), we obtain the following estimate:
\begin{align*}
   -\Big\langle\frac{1}{\ve}\CH_{\nn}^\ve({Q}_R),\PP\Big\rangle\leq &~\frac{\ud}{\ud t}\Big(-\frac{1}{2\ve}\langle\CH_\nn^\ve(Q_R),Q_R\rangle-\CA(Q_R,\PP)\Big)\\&+C({\fE}+\ve{\fE}^{\frac{3}{2}}+{\fE}^{\frac{1}{2}}\fF^{\frac{1}{2}}+\ve{\fE}\fF^{\frac{1}{2}}+{\fE}^{\frac{1}{2}}\|\FF_R\|_{L^2}).
\end{align*}
This completes the proof of Lemma \ref{A key lemma}.
\end{proof}

We now turn to the argument of Proposition \ref{proposition-PP}. Based on Lemmas \ref{norm:QR,vR}--\ref{A key lemma}, the proof will be divided into three steps.
\begin{proof}[Proof of Proposition \ref{proposition-PP}]

{\it Step 1. $L^2$-estimate.}
Multiplying (\ref{eq-QR}) by $\PP$ and (\ref{eq-vR}) by $\vv_R$, respectively, integrating by parts over the space $\BR$, we deduce that
\begin{align}
    \langle\vv_R,&\partial_t\vv_R\rangle+ J\langle\PP,\partial_t\PP\rangle\nonumber\\
    =&\underbrace{-\langle\beta_1Q_0(Q_0:\DD_R)+\beta_4\DD_R+\beta_5\DD_R\cdot Q_0+\beta_6Q_0\cdot\DD_R,\nabla\vv_R\rangle}_{J_1}\nonumber\\
    &\underbrace{-\langle\beta_7(\DD_R\cdot Q_0^2+Q_0^2\cdot \DD_R),\nabla\vv_R\rangle}_{J_2}\underbrace{-\frac{\mu_2}{2}\langle\PP-[\BOm_R,Q_0],\nabla\vv_R\rangle}_{J_3}\nonumber\\
    &\underbrace{-\mu_1\Big\langle\big[Q_0,(\PP-[\BOm_R,Q_0])\big],\nabla\vv_R\Big\rangle}_{J_4}\underbrace{-\frac{\mu_2}{2}\langle\DD_R,\PP\rangle}_{J_5}\nonumber\\
    &\underbrace{-\mu_1\langle\PP-[\BOm_R,Q_0],\PP\rangle}_{J_6}\underbrace{-\langle\frac{1}{\ve}\CH_\nn^\ve(Q_R),\PP\rangle}_{J_7}\nonumber\\
    &+\langle\nabla\cdot \GG_R+\GG'_R,\vv_R\rangle+\langle\FF_R,\PP\rangle.\label{L^2}
\end{align}
Similar to  the derivation of (\ref{c1}) , we also have
\begin{align*}
    \sum^6_{k=1}J_k=&-{\mu_1}\Big\|\PP-[\BOm_R,Q_0]+\frac{\mu_2}{2\mu_1}\DD_R\Big\|^2_{L^2}-{2\delta}\|\nabla\vv_R\|^2_{L^2}\\
    &-\Big(\overline{\beta}_{1} \left\|\mathbf{n n}: \mathbf{D}_R\right\|_{L^{2}}^{2}+\overline{\beta}_2\left\|\mathbf{D}_R\right\|_{L^{2}}^{2}  +\overline{\beta}_3\left\|\mathbf{n} \cdot \mathbf{D}_R\right\|_{L^{2}}^{2}\Big)
    \\
    \leq &-c_1\fF_0,
\end{align*}
where the constant $c_1=\min\{\delta,1\}$, and $\delta$ represents a small positive constant that had been determined in (\ref{relation overlinebeta}). Moreover, for the last two terms in (\ref{L^2}), it follows that
\begin{align*}
\langle\nabla\cdot \GG_R+\GG'_R,\vv_R\rangle+\langle\FF_R,\PP\rangle\leq \|(\GG'_R,\FF_R)\|_{L^2}{\fE}^{\frac{1}{2}}+\|\GG_R\|_{L^2}\fF^{\frac{1}{2}}.
\end{align*}
Using $\nabla\cdot\tilde{\vv}=0$ and Lemma \ref{norm:FR}, we derive that
\begin{align*}
    \frac{M}{2}\frac{\ud}{\ud t}\|Q_R\|_{L^2}^2=M\langle Q_R,\dot{Q}_R\rangle\leq CM({\fE}+\ve{\fE}^{\frac{3}{2}}),
\end{align*}
where $M\geq 1$ is a sufficiently large constant to be determined later.
Combining (\ref{L^2}) with (\ref{CH,Q0})  and  the above estimates, we obtain
\begin{align}\label{L2-estimate}
    &\frac{\ud}{\ud t}\big({\fE}_{0,0}(t)+\CA(Q_R,\PP)\big)+c_1\fF_0(t)\nonumber\\
    &\quad\leq CM({\fE}+\ve{\fE}^{\frac{3}{2}}+{\fE}^{\frac{1}{2}}\fF^{\frac{1}{2}}+\ve{\fE}\fF^{\frac{1}{2}})+\|(\GG'_R,\FF_R)\|_{L^2}{\fE}^{\frac{1}{2}}+\|\GG_R\|_{L^2}\fF^{\frac{1}{2}},
\end{align}
where $\fE_{0,0}(t), \fF_0(t)$ are defined by (\ref{energyE}) and (\ref{energyF}), respectively, and $\CA(Q_R,\PP)$ is expressed by (\ref{CA-t}).

{\it{Step 2. $H^1$-estimate}.}
We first apply the derivative $\partial_i$ on (\ref{eq-QR}) and take the $L^2$-inner product with $\partial_i\PP$, and then act $\partial_i$ on (\ref{eq-vR}) and
take the $L^2$-inner product with $\partial_i\vv_R$, it follows that
\begin{align}
{\ve^2}\langle\partial_i&\vv_R,\partial_t(\partial_i\vv_R)\rangle+\ve^{2}J\langle\partial_i\PP,\partial_t(\partial_i\PP)\rangle\nonumber\\
   = &\underbrace{-\ve^2\Big\langle\partial_i\Big(\beta_1Q_0(Q_0:\DD_R)+\beta_4\DD_R+\beta_5\DD_R\cdot Q_0+\beta_6Q_0\cdot\DD_R\Big),\nabla\partial_i\vv_R\Big\rangle}_{K_1}\nonumber\\
    &\underbrace{-\langle\ve^2\beta_7\partial_i(\DD_R\cdot Q_0^2+Q_0^2\cdot \DD_R),\nabla\partial_i\vv_R\rangle}_{K_2}\underbrace{-\ve^2\frac{\mu_2}{2}\langle\partial_i(\PP-[\BOm_R,Q_0]),\nabla\partial_i\vv_R\rangle}_{K_3}\nonumber\\
    &\underbrace{-\ve^2\mu_1\Big\langle\partial_i\big[Q_0,(\PP-[\BOm_R,Q_0])\big],\nabla\partial_i\vv_R\Big\rangle}_{K_4}\underbrace{-\ve^2\frac{\mu_2}{2}\langle\partial_i\DD_R,\partial_i\PP\rangle}_{K_5}\nonumber\\
    &\underbrace{-\ve^2\mu_1\langle\partial_i(\PP-[\BOm_R,Q_0]),\partial_i\PP\rangle}_{K_6}\underbrace{-\langle{\ve}\partial_i\CH_\nn^\ve(Q_R),\partial_i\PP\rangle}_{K_7}+\ve^{2}J\langle\partial_i\vv^\ve\cdot\nabla\PP,\partial_i\PP\rangle\nonumber\\
    &-\langle\ve^2\partial_i \GG_R,\nabla\partial_i\vv_R\rangle+\langle\ve^2\partial_i\GG_R',\partial_i\vv_R\rangle+\ve^2\langle\partial_i\FF_R,\partial_i\PP\rangle\nonumber\\
    \leq &~\sum^7_{i=1}K_i+C(1+\ve\fF^{\frac{1}{2}}){\fE}+\ve\|(\partial_i\GG'_R,\partial_i\FF_R)\|_{L^2}{\fE}^{\frac{1}{2}}+\ve\|\partial_i\GG_R\|_{L^2}\fF^{\frac{1}{2}}.\label{H^1}
\end{align}
Now we estimate (\ref{H^1}) term by term.
Remembering the relation $\beta_6-\beta_5=\mu_2$, the terms $K_1$ and $K_2$ can be estimated as
\begin{align}
K_{1}+K_{2} \leq & -\varepsilon^{2}\left\langle\beta_{1} Q_{0}\left(Q_{0}: \partial_{i} \mathbf{D}_{R}\right)+\beta_{4} \partial_{i} \mathbf{D}_{R}, \nabla \partial_{i} \mathbf{v}_{R}\right\rangle \nonumber\\&-\ve^2\langle\beta_{5} \partial_{i} \mathbf{D}_{R} \cdot Q_{0}+\beta_{6} Q_{0} \cdot \partial_{i} \mathbf{D}_{R}, \nabla \partial_{i} \mathbf{v}_{R}\rangle\nonumber\\
& -\varepsilon^{2}\left\langle\beta_{7}\left(\partial_{i} \mathbf{D}_{R} \cdot Q_{0}^{2}+Q_{0}^{2} \cdot \partial_{i} \mathbf{D}_{R}\right), \nabla \partial_{i} \mathbf{v}_{R}\right\rangle\nonumber\\&
+C\left\|\varepsilon \nabla \mathbf{v}_{R}\right\|_{L^{2}}\left\|\varepsilon \nabla \partial_{i} \mathbf{v}_{R}\right\|_{L^{2}} \nonumber\\
\leq & -\varepsilon^{2}\big\langle\beta_{1} Q_{0}\left(Q_{0}: \partial_{i} \mathbf{D}_{R}\right)+\beta_{4} \partial_{i} \mathbf{D}_{R}, \partial_{i} \mathbf{D}_{R}\big\rangle\nonumber \\&
-\ve^2\langle\frac{\beta_{5}+\beta_{6}}{2}\left(Q_{0} \cdot \partial_{i} \mathbf{D}_{R}+\partial_{i} \mathbf{D}_{R} \cdot Q_{0}\right), \partial_{i} \mathbf{D}_{R}\big\rangle\nonumber\\
& -\varepsilon^{2}\left\langle\beta_{7}\left(\partial_{i} \mathbf{D}_{R} \cdot Q_{0}^{2}+Q_{0}^{2} \cdot \partial_{i} \mathbf{D}_{R}\right), \partial_{i} \mathbf{D}_{R}\right\rangle\nonumber\\&
+\underbrace{\ve^2\frac{\mu_2}{2}\left\langle\left[\partial_{i} \mathbf{D}_{R}, Q_{0}\right], \nabla \partial_{i} \mathbf{v}_{R}\right\rangle}_{K_{1}^{\prime}}+C \mathfrak{E}^{\frac{1}{2}} \mathfrak{F}^{\frac{1}{2}}.\label{T12}
\end{align}
Noting the symmetry of $[\partial_i\BOm_R,Q_0]$, we infer that
\begin{align}
K_{1}^{\prime}+K_{3}+K_{5} \leq & ~\varepsilon^{2} \frac{\mu_{2}}{2}\left\langle\left[\partial_{i} \mathbf{D}_{R}, Q_{0}\right], \partial_{i} \boldsymbol{\Omega}_{R}\right\rangle-\varepsilon^{2} \mu_{2}\left\langle\partial_{i} \mathbf{D}_{R}, \partial_{i} \PP\right\rangle\nonumber \\
& +\varepsilon^{2} \frac{\mu_{2}}{2}\left\langle\left[\partial_{i} \boldsymbol{\Omega}_{R}, Q_{0}\right], \partial_{i} \mathbf{D}_{R}\right\rangle+C\left\|\varepsilon \nabla \mathbf{v}_{R}\right\|_{L^{2}}\left\|\varepsilon \nabla \partial_{i} \mathbf{v}_{R}\right\|_{L^{2}} \nonumber\\
\leq & -\varepsilon^{2} \mu_{2}\left\langle\partial_{i} \PP-\left[\partial_{i} \boldsymbol{\Omega}_{R}, Q_{0}\right], \partial_{i} \mathbf{D}_{R}\right\rangle+C \mathfrak{E}^{\frac{1}{2}} \mathfrak{F}^{\frac{1}{2}}.\label{T135}
\end{align}
At the same time, the terms $K_4$ and $K_6$ can be calculated as
\begin{align}
K_4+K_6 \leq & -\varepsilon^2 \mu_1\left\langle\left[Q_0,\left(\partial_i \PP-\left[\partial_i \boldsymbol{\Omega}_R, Q_0\right]\right)\right], \nabla \partial_i \mathbf{v}_R\right\rangle\nonumber \\
& -\varepsilon^2 \mu_1\left\langle\partial_i \PP-\left[\partial_i \boldsymbol{\Omega}_R, Q_0\right], \partial_i \PP\right\rangle \nonumber\\
& +C\big(\|\varepsilon(\PP-\left[\boldsymbol{\Omega}_R, Q_0\right])\|_{L^2}+\left\|\varepsilon\left[\boldsymbol{\Omega}_R, \partial_i Q_0\right]\right\|_{L^2}\big)\left\|\varepsilon \nabla \partial_i \mathbf{v}_R\right\|_{L^2} \nonumber\\
& +C\left\|\varepsilon \nabla \mathbf{v}_R\right\|_{L^2}\left\|\varepsilon \partial_i \PP\right\|_{L^2}\nonumber \\
\leq & -\varepsilon^2 \mu_1\left\|\partial_i \PP-\left[\partial_i \boldsymbol{\Omega}_R, Q_0\right]\right\|_{L^2}^2+C(\mathfrak{E}^{\frac{1}{2}} \mathfrak{F}^{\frac{1}{2}}+{\fE}).\label{T46}
\end{align}
It can be observed that
\begin{align*}
&-\varepsilon^2 \mu_1\left\|\partial_i \PP-\left[\partial_i \boldsymbol{\Omega}_R, Q_0\right]\right\|_{L^2}^2-\varepsilon^2 \mu_2\left\langle\partial_i \PP-\left[\partial_i \boldsymbol{\Omega}_R, Q_0\right], \partial_i \mathbf{D}_R\right\rangle \nonumber\\
&\qquad=-\varepsilon^2 \mu_1\big\|\underbrace{\partial_i \PP-\left[\partial_i \boldsymbol{\Omega}_R, Q_0\right]+\frac{\mu_2}{2 \mu_1} \partial_i \mathbf{D}_R}_{{\UU}_1}\big\|_{L^2}^2+\frac{\mu_2^2}{4 \mu_1}\left\|\partial_i \mathbf{D}_R\right\|_{L^2}^2.
\end{align*}
Similar to the argument of (\ref{c1}), we have
\begin{align}
\sum^6_{i=1}K_i\leq & -\varepsilon^2 \beta_1 s^2\left\|\mathbf{n n}: \partial_i \mathbf{D}_R\right\|_{L^2}^2-\varepsilon^2\left(\beta_4-\frac{s\left(\beta_5+\beta_6\right)}{3}-\frac{\mu_2^2}{4\mu_1}-4\delta\right)\left\|\partial_i \mathbf{D}_R\right\|_{L^2}^2\nonumber \\ & -\varepsilon^2 s\left(\beta_5+\beta_6\right)\left\|\mathbf{n} \cdot \partial_i \mathbf{D}_R\right\|_{L^2}^2-\varepsilon^2 \mu_1\left\|{\UU}_1\right\|_{L^2}^2 -2\ve^2\delta\|\nabla\partial_i\vv_R\|_{L^2}^2\nonumber\\ & +C(\mathfrak{E}^{\frac{1}{2}} \mathfrak{F}^{\frac{1}{2}}+{\fE})\nonumber\\ \leq & -c_1\fF_1+C ({\fE}+{\fE}^{\frac{1}{2}}\mathfrak{F}^{\frac{1}{2}}).\label{T1-6}
\end{align}
Summarizing (\ref{H^1}), (\ref{T1-6}) and (\ref{CH,Q1}), we deduce that
\begin{align}
    \frac{\ud}{\ud t}{\fE}_{0,1}(t)+c_1\fF_1(t)\leq&~ C\big( {\fE}+ \ve{\fE}^{\frac{3}{2}}+{\fE}^{\frac{1}{2}}\fF^{\frac{1}{2}}+\ve{\fE}\fF^{\frac{1}{2}}\big)\nonumber\\
    &~+\ve\|(\partial_i\GG_R',\partial_i\FF_R)\|_{L^2}{\fE}^{\frac{1}{2}}+\|\ve\partial_i\GG_R\|_{L^2}\fF^{\frac{1}{2}}.\label{H1-estimate}
\end{align}

{\it {Step 3. $H^2$-estimate.}}
Similarly, from the system (\ref{eq-QR})--(\ref{eq-vR-div}), we derive that
\begin{align}
    \frac{\ve^4}{2}\frac{\ud}{\ud t}&\Big\{\|\Delta\vv_R\|_{L^2}^2+J\|\Delta\PP\|_{L^2}^2\Big\}\nonumber\\
   = &\underbrace{-\ve^4\Big\langle\Delta\Big(\beta_1Q_0(Q_0:\DD_R)+\beta_4\DD_R+\beta_5\DD_R\cdot Q_0+\beta_6Q_0\cdot\DD_R\Big),\nabla\Delta\vv_R\Big\rangle}_{V_1}\nonumber\\
    &\underbrace{-\ve^4\beta_7\langle\Delta(\DD_R\cdot Q_0^2+Q_0^2\cdot \DD_R),\nabla\Delta\vv_R\rangle}_{V_2}\underbrace{-\ve^4\frac{\mu_2}{2}\langle\Delta(\PP-[\BOm_R,Q_0]),\nabla\Delta\vv_R\rangle}_{V_3}\nonumber\\
    &\underbrace{-\ve^4\mu_1\Big\langle\Delta\big[Q_0,(\PP-[\BOm_R,Q_0])\big],\nabla\Delta\vv_R\Big\rangle}_{V_4}\underbrace{-\ve^4\frac{\mu_2}{2}\langle\Delta\DD_R,\Delta\PP\rangle}_{V_5}\nonumber\\
    &\underbrace{-\ve^4\mu_1\langle\Delta(\PP-[\BOm_R,Q_0]),\Delta\PP\rangle}_{V_6}\underbrace{-\ve^3\langle\Delta\CH_\nn^\ve(Q_R),\Delta\PP\rangle}_{V_7}\nonumber\\
    &-\langle\ve^4\Delta \GG_R,\nabla\Delta\vv_R\rangle+\langle\ve^4\Delta\GG_R',\Delta\vv_R\rangle+\ve^4\langle\Delta\FF_R,\Delta\PP\rangle\nonumber\\&-\ve^{4}J\langle\Delta(\vv^\ve\cdot\nabla\PP),\Delta \PP\rangle\nonumber\\
    \leq&~\sum^7_{k=1}V_k-\ve^{4}J\langle\Delta(\vv^\ve\cdot\nabla\PP),\Delta \PP\rangle\nonumber\\
    &~+C\left(\ve^2\|(\Delta\GG_R',\Delta\FF_R)\|_{L^2}{\fE}^{\frac{1}{2}}+\ve^2\|\Delta\GG_R\|_{L^2}\fF^{\frac{1}{2}}\right).\label{H^2}
\end{align}
Due to the incompressible condition $\nabla\cdot\vv^\ve=0$, it holds that
\begin{align*}
    -\ve^{4}J&\langle\Delta(\vv^\ve\cdot\nabla\PP),\Delta \PP\rangle\\=&-\ve^{4}J\big\langle[\Delta,\vv^\ve\cdot\nabla]\PP,\Delta \PP\big\rangle\\
    =&-\ve^{4}J\langle\Delta\vv^\ve\cdot\nabla\PP,\Delta \PP\rangle-\ve^{4}J\langle\partial_i\vv^\ve\cdot\nabla\partial_i\PP),\Delta \PP\rangle\\
    \leq&~C\ve^{4}\big(\|\Delta\vv^\ve\|_{H^1}\|\PP\|_{H^2}\|\Delta\PP\|_{L^2}+\|\partial_i\vv^\ve\|_{H^2}\|\nabla^2\PP\|_{L^2}^2\big)\\
    \leq&~C({\fE}+\ve{\fE}\fF^{\frac{1}{2}}).
\end{align*}
With regards to the terms $V_1$ and $V_2$, we estimate them as follows:
\begin{align}
   V_1+V_2
    \leq&{-\ve^4\langle\beta_1Q_0(Q_0:\Delta\DD_R)+\beta_4\Delta\DD_R+\beta_5\Delta\DD_R\cdot Q_0+\beta_6Q_0\cdot\Delta\DD_R,\nabla\Delta\vv_R\rangle}\nonumber\\
    &{-\ve^4\beta_7\langle\Delta\DD_R\cdot Q_0^2+Q_0^2\cdot \Delta\DD_R,\nabla\Delta\vv_R\rangle}+\ve^4\|\nabla\vv_R\|_{H^1}\|\nabla\Delta\vv_R\|_{L^2}\nonumber\\
    \leq&{-\ve^4\langle\beta_1Q_0(Q_0:\Delta\DD_R)+\beta_4\Delta\DD_R+\frac{\beta_5+\beta_6}{2}(\Delta\DD_R\cdot Q_0+Q_0\cdot\Delta\DD_R),\Delta\DD_R\rangle}\nonumber\\
    &{-\ve^4\beta_7\langle\Delta\DD_R\cdot Q_0^2+Q_0^2\cdot \Delta\DD_R,\Delta\DD_R\rangle}+\underbrace{\ve^4\frac{\mu_2}{2}\langle[\Delta\DD_R,Q_0],\nabla\Delta\vv_R\rangle}_{V_1'}+C{\fE}^{\frac{1}{2}}\fF^{\frac{1}{2}}.\label{MT12}
\end{align}
Further, a direct calculation enable us to get
\begin{align}
    V_1'+V_3+V_5\leq&~\ve^4\frac{\mu_2}{2}\langle[\Delta\DD_R,Q_0],\Delta\BOm_R\rangle-\ve^4\langle\Delta\DD_R,\Delta\PP\rangle\nonumber\\
   & +\ve^4\frac{\mu_2}{2}\langle[\Delta\BOm_R,Q_0],\Delta\DD_R\rangle+C\ve^4\|\nabla\vv_R\|_{H^1}\|\nabla\Delta\vv_R\|_{L^2}\nonumber\\
    \leq&-\ve^4\mu_2\langle\Delta\PP-[\Delta\BOm_R,Q_0],\Delta\DD_R\rangle+C{\fE}^{\frac{1}{2}}\fF^{\frac{1}{2}}.\label{MT135}
\end{align}
For the estimates of $V_4$ and $V_6$, it holds that
\begin{align}
V_4+V_6\leq & -\varepsilon^4 \mu_1\left\langle\big[Q_0,(\Delta \PP-\left[\Delta \boldsymbol{\Omega}_R, Q_0\right])\big], \nabla \Delta \mathbf{v}_R\right\rangle \nonumber\\
& -\varepsilon^4 \mu_1\left\langle\Delta \PP-\left[\Delta \boldsymbol{\Omega}_R, Q_0\right], \Delta \PP\right\rangle\nonumber \\
& +C\big(\|\varepsilon^2(\PP-\left[\boldsymbol{\Omega}_R, Q_0\right])\|_{H^1}+\left\|\varepsilon^2\left[\boldsymbol{\Omega}_R, \partial_i Q_0\right]\right\|_{H^1}\big)\left\|\varepsilon ^2\nabla \Delta \mathbf{v}_R\right\|_{L^2} \nonumber\\
& +C\left\|\varepsilon^2 \nabla \mathbf{v}_R\right\|_{H^1}\left\|\varepsilon^2 \Delta \PP\right\|_{L^2} \nonumber\\
\leq & -\varepsilon^4 \mu_1\left\|\Delta \PP-\left[\Delta\boldsymbol{\Omega}_R, Q_0\right]\right\|_{L^2}^2+C(\mathfrak{E}^{\frac{1}{2}} \mathfrak{F}^{\frac{1}{2}}+{\fE}).\label{MT46}
\end{align}
A simple calculation leads to
\begin{align}
-\varepsilon^4 \mu_1&\left\|\Delta \PP-\left[\Delta \boldsymbol{\Omega}_R, Q_0\right]\right\|_{L^2}^2-\varepsilon^4 \mu_2\left\langle\Delta \PP-\left[\Delta \boldsymbol{\Omega}_R, Q_0\right], \Delta \mathbf{D}_R\right\rangle \nonumber\\
&=-\varepsilon^2 \mu_1\big\|\underbrace{\Delta \PP-\left[\Delta\boldsymbol{\Omega}_R, Q_0\right]+\frac{\mu_2}{2 \mu_1} \Delta \mathbf{D}_R}_{{\UU}_2}\big\|_{L^2}^2+\frac{\mu_2^2}{4 \mu_1}\left\|\Delta \mathbf{D}_R\right\|_{L^2}^2.\label{U2}
\end{align}
Similar to the derivation of (\ref{c1}), it follows that
\begin{align}
\sum^6_{k=1}V_k \leq & -\ve^4 \beta_1 s^4\left\|\mathbf{n n}: \Delta \mathbf{D}_R\right\|_{L^2}^2-\varepsilon^4\left(\beta_4-\frac{s\left(\beta_5+\beta_6\right)}{3}-\frac{\mu_2^2}{4\mu_1}-4\delta\right)\left\|\Delta \mathbf{D}_R\right\|_{L^2}^2 \nonumber\\ & -\varepsilon^4 s\left(\beta_5+\beta_6\right)\left\|\mathbf{n} \cdot \Delta \mathbf{D}_R\right\|_{L^2}^2-\varepsilon^4 \mu_1\left\|{\UU}_2\right\|_{L^2}^2 -2\ve^4\delta\|\nabla\Delta\vv_R\|_{L^2}^2\nonumber\\ & +C(\mathfrak{E}^{\frac{1}{2}} \mathfrak{F}^{\frac{1}{2}}+{\fE})\nonumber \\ \leq & - c_1 \fF_2+C ({\fE}+{\fE}^{\frac{1}{2}}\mathfrak{F}^{\frac{1}{2}}).\label{MT1-6}
\end{align}
Consequently, using Lemma \ref{A key lemma} and the above estimates, we obtain
\begin{align}
    \frac{\ud}{\ud t}{\fE}_{0,2}(t)+c_1\fF_2(t)\leq&~C\Big({\fE}+\ve{\fE}^{\frac{3}{2}}+{\fE}^{\frac{1}{2}}\fF^{\frac{1}{2}}+\ve{\fE}\fF^{\frac{1}{2}}\nonumber\\&+\ve^2\|(\Delta\GG_R',\Delta\FF_R\|_{L^2}{\fE}^{\frac{1}{2}}
    +\ve^2\fF^{\frac{1}{2}}\|\Delta\GG_R\|_{L^2}\Big).\label{H2-estimate}
\end{align}

Therefore, combining (\ref{L2-estimate}), (\ref{H1-estimate}) and (\ref{H2-estimate}), we deduce that
\begin{align}\label{fE-CA-energy}
    &\frac{\ud}{\ud t}\big({\fE}(t)+\CA(Q_R,\PP)\big)+c_1\fF(t)\nonumber\\
    &\quad\leq CM\Big({\fE}+\ve{\fE}^{\frac{3}{2}}+{\fE}^{\frac{1}{2}}\fF^{\frac{1}{2}}+\ve{\fE}\fF^{\frac{1}{2}}\Big)\nonumber\\
    &\qquad+C\sum_{k=0,1,2}\ve^{2k}\Big({\fE}^{\frac{1}{2}}\|(\partial_i^k\GG_R',\partial_i^k\FF_R)\|_{L^2}+\fF^{\frac{1}{2}}\|\partial_i^k\GG_R\|_{L^2}\Big)\nonumber\\
    &\quad\leq CM\Big(({\fE}^{\frac{1}{2}}+{\fE}+\ve{\fE}^{\frac{3}{2}}+\ve^3{\fE}^2)+\fF^{\frac{1}{2}}(1+{\fE}^{\frac{1}{2}}+\ve{\fE})+\fF(\ve+\ve{\fE}^{\frac{1}{2}}+\ve^4{\fE})\Big)\nonumber\\
    &\quad\leq CM^2(1+\fE+\ve^2\fE^2)+CM\fF(\ve+\ve\fE^{\frac{1}{2}}+\ve^4\fE)+\frac{c_1}{2}\fF.
\end{align}
Applying the definition of $\CA(Q_R,\PP)$, it can be estimated as
\begin{align*}
    |\CA(Q_R,\PP)|=&~J\left|\int_{\BR}\Big(\CH_\nn^{-1}\Big(2bs\dot{\overline{\nn\nn}}\cdot Q_R^\top-2cs^2(\dot{\overline{\nn\nn}}:Q_R^\top)\Big(\nn\nn-\frac{1}{3}\II\Big)\Big):\PP\Big)\ud\xx\right|\\
    \leq&~C(\nn,\tilde{\vv})\|\PP\|_{L^2}\|Q_R\|_{L^2}\leq \frac{1}{4}\Big(J\|\PP\|_{L^2}^2+C_1(\nn,\tilde{\vv})\|Q_R\|_{L^2}^2\Big).
\end{align*}
Then, it is sufficient to take $M\geq \max\{1,C_1(\nn,\tilde{\vv})\}$ so that $|\CA(Q_R,\PP)|\leq \frac{1}{2} \fE(t)$. Hence, from (\ref{fE-CA-energy}), there exists a constant $C>0$ independent of $(\ve,M)$, such that
\begin{align*}
    {\fE}(t)&-3\fE(0)+{c_1}\int_0^t\fF(\tau)\ud \tau\\&\leq CM^2\int_0^t\Big(\big(1+{\fE}(\tau)+\ve^2{\fE}^2(\tau)\big)+\fF(\tau)\big(\ve+\ve{\fE}^{\frac{1}{2}}(\tau)+\ve^4{\fE}(\tau)\big)\Big)\ud \tau.
\end{align*}
This completes the proof of Proposition \ref{proposition-PP}.
\end{proof}

\section{From the Qian--Sheng model to the noninertial Ericksen--Leslie model}\label{mneq0-section}

In this section, we present a rigorous derivation of the noninertial Ericksen--Leslie model starting from the Qian--Sheng model. Two small parameters derived from the elastic coefficients $L_i(i=1,2,3)$ and the inertial constant $J$ are considered. For this purpose, the small parameters $\ve$ and $\ve^m$ are introduced simultaneously, where $m\in \mathbb{Z}^+$ and $\chi(m)=0$. The Qian--Sheng model is still a hyperbolic system with the small parameter $\ve$, while the limit model is the parabolic Ericksen--Leslie system. The structure of the limit model has changed fundamentally, since the small inertial parameter $\ve^m(m\geq1)$ goes to zero. In brief, by the Hilbert expansion, we will rigorously show that the smooth solution to the inertial Qian--Sheng model converges to the solution to the noninertial Ericksen--Leslie model.

Similar to Proposition \ref{0toEL}, we also have the following proposition.

\begin{proposition}\label{0toEL-m>0}
   Under the condition of $m\in\mathbb{Z}^+$, i.e., $\chi(m)=0$,
   if $(Q_0, \vv_0)$ is a smooth solution of the $O(1)$ system {\em (\ref{ve 0-Q})--(\ref{ve 0-div})},  then $(\nn, \vv_0)$ must be a solution of the noninertial Ericksen--Leslie system {\em(\ref{EL-vv})--(\ref{EL-div})} with $I=0$,  where the coefficients are determined by {\rm(\ref{coefficients})}.
\end{proposition}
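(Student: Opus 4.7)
The plan is to mirror the derivation of Proposition \ref{0toEL} from \cite{LW}, with the simplifying feature that when $\chi(m)=0$ (i.e., $m\in\mathbb{Z}^+$), the inertial term $J\ddot{Q}_0$ drops out of the $O(1)$ tensor equation (\ref{ve 0-Q}). Accordingly, the resulting $\nn$-equation is expected to be (\ref{EL-vv}) with $I=0$, consistent with the coefficient identification $I=\chi(m)\cdot 2s^2J$ in (\ref{coefficients}).

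First I would substitute the uniaxial ansatz $Q_0=s(\nn\nn-\tfrac{1}{3}\II)$ into (\ref{ve 0-Q}), which becomes
\begin{align*}
\mu_1\dot{Q}_0 = -\CH_\nn(Q_1)-\CL(Q_0)-\frac{\mu_2}{2}\DD_0+\mu_1[\BOm_0,Q_0].
\end{align*}
Since $Q_1$ is undetermined at this order, I would apply the projection $\MP^{in}$ onto $\mathrm{Ker}\,\CH_\nn$: by Proposition \ref{linearized-oper-prop}(i), $\MP^{in}\CH_\nn(Q_1)=0$, so the projected equation involves only $\nn$ and $\vv_0$. Using the explicit form of $\MP^{in}$ in (\ref{projection_in1}) and the identification $\nn\nn^\perp+\nn^\perp\nn\leftrightarrow\nn^\perp$ of $\mathrm{Ker}\,\CH_\nn$ with $\mathbb{V}_\nn$, the projected tensor equation can be recast as a vector equation of the form $\nn\times(-\hh+\gamma_1\NN+\gamma_2\DD_0\cdot\nn)=0$, where $\NN=\dot\nn-\BOm_0\cdot\nn$, $\hh$ is the Oseen--Frank molecular field arising from $\MP^{in}(\CL(Q_0))$, and the coefficients satisfy $\gamma_1=2\mu_1s^2$, $\gamma_2=\mu_2 s$ and $k_1,k_2,k_3,k_4$ as in (\ref{coefficients}).

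Next I would substitute the uniaxial ansatz into the velocity equation (\ref{ve 0-v}) and expand $\sigma(Q_0,\vv_0)+\sigma^d(Q_0,Q_0)$ in the six canonical tensor structures $\nn\nn(\nn\nn:\DD_0)$, $\DD_0$, $\nn\NN$, $\NN\nn$, $\nn\nn\cdot\DD_0$ and $\DD_0\cdot\nn\nn$. Writing $\CN_0=\dot{Q}_0-[\BOm_0,Q_0]=s(\nn\NN+\NN\nn)$, the coefficients in front of these structures recover the Leslie viscosities $\alpha_1,\dots,\alpha_6$ precisely as prescribed by (\ref{coefficients}); Parodi's relation (\ref{Parodi1}) follows from $\beta_6-\beta_5=\mu_2$. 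The distortion stress $\sigma^d(Q_0,Q_0)$ reduces to the Ericksen stress $\sigma^E$ up to a gradient term absorbable in the modified pressure $p_0$.

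The main (and essentially only) nontrivial item is the algebraic bookkeeping in these two tensorial reductions and the verification that the six Leslie coefficients match. This computation was carried out in \cite{LW} for the inertial case $\chi(m)=1$; the present situation is obtained verbatim by deleting the single term $J\ddot{Q}_0$, which is precisely the contribution that produces $I\ddot{\nn}$ in the $\nn$-equation after applying $\MP^{in}$. Hence, once the $m=0$ case is invoked from \cite{LW}, setting $I=\chi(m)\cdot 2s^2J=0$ in the resulting Ericksen--Leslie system immediately yields the noninertial limit, completing the proof.
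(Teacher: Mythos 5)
Your proposal is correct and follows the same route the paper takes. The paper itself does not spell out a proof: it simply remarks that Proposition~\ref{0toEL-m>0} is obtained ``similar to Proposition~\ref{0toEL},'' which in turn is cited from~\cite{LW}; your observation that the $\chi(m)=0$ case is obtained from the $\chi(m)=1$ derivation by deleting the single term $J\ddot{Q}_0$ in~(\ref{ve 0-Q}) — the term whose image under $\MP^{in}$ produces $I\ddot{\nn}$ — is precisely the content of that remark, and the velocity equation~(\ref{ve 0-v}) is untouched by the choice of $m$, so the Leslie-stress bookkeeping from~\cite{LW} carries over verbatim.
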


\subsection{Existence of the Hilbert expansion}

In this subsection, let us solve $(Q_k,\vv_k)(k=1,2)$ and $Q_3$ from the system (\ref{ve 1-Q})--(\ref{ve 2-div}) with $\chi(m)=0$. Note that the system (\ref{ve 1-Q})--(\ref{ve 1-div}) and the system (\ref{ve 2-Q})--(\ref{ve 2-div}) are all parabolic. Then their solutions all have better continuity. This is an essential difference from the hyperbolic-parabolic system in Subsection \ref{J-existence-He}. Thus, it is relatively easy to prove the existence of the Hilbert expansion. The corresponding existence result can be stated as follows.

\begin{proposition}\label{nu:prop Q(1)}
Assume that $Q_0=s(\nn\nn-\frac{1}{3}\II)$. Let $(\nn, \vv_0)$ be a smooth solution of the noninertial Ericksen--Leslie system {\em(\ref{EL-vv})--(\ref{EL-div})} {\em(}with the inertial constant $I=0${\em)} on $[0, T]$ and satisfy
    \[(\vv_0,\nabla \nn)\in C([0, T];H^\ell),\quad {\ell\ge 20.} \]
Then there exists the solutions $(Q_k, \vv_k)(k=1,2)$ and $Q_3\in(\text{Ker}\CH_{\nn})^{\perp}$ of the $O(\ve^k)$ system {\em (\ref{ve 1-Q})--(\ref{ve 2-div})} with $\chi(m)=0$ satisfying
\begin{align*}
 (\vv_k,\nabla Q_k)\in C([0, T];H^{\ell-4k})(k=0,1,2),\quad Q_3\in C([0, T];H^{\ell-11}).
\end{align*}
\end{proposition}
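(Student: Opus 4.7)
The plan is to mimic the proof of Proposition \ref{prop Q(1)}, exploiting the substantial simplification that $\chi(m)=0$ brings: every second-order material derivative $\ddot{Q}_k$ of the \emph{leading} unknowns disappears from the $O(\ve^k)$ systems for $k=0,1,2$. Consequently the linearized systems that determine $(Q_k^\top,\vv_k)$ are genuinely parabolic rather than hyperbolic-parabolic, and continuity in time follows from standard parabolic regularization instead of the hyperbolic $L^\infty$-in-time framework.

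I would first solve $Q_1^\perp$ algebraically from the $O(1)$ equation (\ref{ve 0-Q}): with $\chi(m)=0$ it reduces to
\[
\CH_\nn(Q_1^\perp)=-\mu_1\dot{Q}_0-\CL(Q_0)-\frac{\mu_2}{2}\DD_0+\mu_1[\BOm_0,Q_0],
\]
and invertibility of $\CH_\nn$ on $(\mathrm{Ker}\CH_\nn)^\perp$ (Proposition \ref{linearized-oper-prop}) yields $Q_1^\perp\in C([0,T];H^{\ell-2})$ directly from the hypotheses on $(\nn,\vv_0)$ together with the ansatz $Q_0=s(\nn\nn-\frac{1}{3}\II)$. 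Next I would apply the projection $\MP^{in}$ to the $O(\ve)$ system (\ref{ve 1-Q})--(\ref{ve 1-div}); Lemma \ref{MP-Q1}, together with the fact that $\CH_\nn(Q_2)$ vanishes under $\MP^{in}$, yields a closed \emph{first-order-in-time} linear system for $(Q_1^\top,\vv_1)$ of the form
\begin{align*}
\mu_1\DP_1^\top &= \MP^{in}\Bigl(-\CL(Q_1^\top)-\tfrac{\mu_2}{2}\DD_1+\mu_1[\BOm_1,Q_0]\Bigr)+L(Q_1^\top,\vv_1)+R,\\
(\partial_t+\vv_0\cdot\nabla)\vv_1 &= -\vv_1\cdot\nabla\vv_0-\nabla p_1+\nabla\cdot(\cdots),\qquad \nabla\cdot\vv_1=0,
\end{align*}
where for $m=1$ the known source $\widetilde{\chi}(m-1)J\ddot{Q}_0$ appearing in $\TT_1$ is absorbed into $R$ without damage---since $Q_0=Q_0(\nn)$ and the limiting parabolic Ericksen--Leslie equation for $\nn$ produces as many time derivatives of $\nn$ as we need from spatial ones of $(\nn,\vv_0)$. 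A standard energy estimate on the functional
\[
\sum_{k=0}^{\ell-4}\tfrac{1}{2}\bigl(\|\partial_i^k\vv_1\|_{L^2}^2+\langle\partial_i^k Q_1^\top,\CL(\partial_i^k Q_1^\top)\rangle\bigr)+\tfrac{1}{2}\|Q_1^\top\|_{L^2}^2,
\]
using the Parodi relation (\ref{Q-Parodi}) and the viscosity constraints (\ref{Beta-relation}) exactly as in the derivation of (\ref{c1}), gives a Gr\"onwall inequality that delivers $(\vv_1,\nabla Q_1^\top)\in C([0,T];H^{\ell-4})$ with dissipation in both $\|\nabla\vv_1\|_{L^2}$ and $\|\nabla Q_1^\top\|_{L^2}$.

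The construction then iterates: $Q_2^\perp$ is recovered from (\ref{ve 1-Q}) by applying $\CH_\nn^{-1}$, the pair $(Q_2^\top,\vv_2)$ is constructed by the identical parabolic energy argument (with $\widetilde{\chi}(m-2)J\ddot{Q}_0$ in $\TT_2$ treated as a known forcing when $m=2$), and finally $Q_3\in(\mathrm{Ker}\CH_\nn)^\perp$ is produced from (\ref{ve 2-Q}) by one further application of $\CH_\nn^{-1}$. Each level loses at most four derivatives (one from $\CL$, one from the velocity gradient, two from differentiating lower-order quantities in time), and the hypothesis $\ell\ge 20$ is calibrated so that $Q_3\in C([0,T];H^{\ell-11})$ remains positive.

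I do not anticipate any essentially new analytical obstacle relative to Proposition \ref{prop Q(1)}; the parabolic structure actually strengthens dissipation (a full $H^1$-bound on $Q_1^\top$ comes directly from $\langle\CL(Q_1^\top),Q_1^\top\rangle$), eliminates the need to introduce an auxiliary unknown for $\DP_1^\top$, and yields continuity in time automatically. The main piece of bookkeeping requiring care is the two residual forcings $\widetilde{\chi}(m-j)J\ddot{Q}_0$ for $j=1,2$: they are nonzero only for $m=1,2$ respectively, and, being known functions of $(\nn,\vv_0)$ and their derivatives, they contribute to $R$ with regularity controlled by differentiating the parabolic $\nn$-equation. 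Thus the argument here is strictly shorter and more transparent than in the hyperbolic case $m=0$.
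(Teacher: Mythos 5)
Your proposal follows the same route as the paper: solve $Q_1^\perp$ algebraically via $\CH_\nn^{-1}$, project the $O(\ve)$ system onto $\mathrm{Ker}\CH_{\nn}$ to get a closed first-order-in-time linear system for $(Q_1^\top,\vv_1)$, run the Ericksen--Leslie-type energy estimate (completing the square in the Leslie stress), and iterate while absorbing the residual forcings $\widetilde{\chi}(m-j)J\ddot{Q}_0$ into the known source $\widetilde{R}$. One minor imprecision: the paper's dissipation functional $\widehat{\CF}_\ell$ contains only $\|\nabla\vv_1\|_{L^2}^2$, so the $\|\nabla Q_1^\top\|_{L^2}$ control you invoke is not a dissipation term but is already built into the energy $\widehat{\CE}_\ell$ through $\langle\CL(Q_1^\top),Q_1^\top\rangle$, while the $\dot{Q}_1^\top$-coupling coming from the velocity equation is absorbed by the sign-definite square $\mu_1\big\|\dot{Q}_1^\top-[\BOm_1,Q_0]+\frac{\mu_2}{2\mu_1}\DD_1\big\|_{L^2}^2$.
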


Similar to the argument of Proposition \ref{prop Q(1)}, we will only prove the existence of the solution $(Q_1,\vv_1)$, then derive a linear system and a closed energy estimate. The existence of the solutions $(Q_2,\vv_2)$ and $Q_3$ can be shown by a similar argument.

The parabolic system (\ref{ve 1-Q})--(\ref{ve 1-div}) is not closed since it involves $Q_2$ which is unknown. Thus, we need to project this system into $\text{Ker}\CH_{\nn}$ to remove the non-leading term. In what follows, we denote by $\widetilde{L}(Q_1^\top,\vv_1)$ the terms only linearly depending on $(Q_1^\top,\vv_1)$ (not have their derivatives) with the coefficients belonging to $C([0,T];H^{\ell-1})$. The function $\widetilde{R}\in C([0,T];H^{\ell-3})$ is denoted as the terms depending only on $\nn,\vv_0$ and $Q^{\perp}_1$. Mimicking
the derivation of Lemma \ref{MP-Q1}, it follows that
\begin{align*}
    &\MP^{in}(\DP_1)=\dot{Q}_1^\top+\widetilde{L}(Q_1^\top,\vv_1)+\widetilde{R},\quad \MP^{out}(\DP_1)=\widetilde{L}(Q_1^\top,\vv_1)+\widetilde{R},\\
    &\MP^{in}(\TT_1)=\widetilde{L}(Q_1^\top)+\chi(m)\widetilde{L}(\vv_1)+\widetilde{R}=\widetilde{L}(Q_1^\top)+\widetilde{R},
\end{align*}
where the notation $\dot{f}^\top=(\partial_t+\vv_0\cdot\nabla)f^{\top}$, and the terms $\DP_1$ and $\TT_1$ are defined by (\ref{DP_1}) and (\ref{TT_1}), respectively.

 We next prove Proposition \ref{nu:prop Q(1)}.

 \begin{proof}[Proof of Proposition \ref{nu:prop Q(1)}]

Let $(\nn,\vv_0)$ be a smooth solution to the noninertial Ericksen--Leslie system (\ref{EL-vv})--(\ref{EL-div}) on $[0,T]$ with $I=0$, such that
\begin{align*}
    (\nabla\nn,\vv_0)\in C([0,T];H^{\ell}),~~\ell\geq 20.
\end{align*}
Since $Q_0=s(\nn\nn-\frac{1}{3}\II)$, we have $Q_0\in C([0,T];H^{\ell+1})$.
Let $Q_1=Q^{\top}_1+Q^{\perp}_1$ with $Q^{\top}_1\in {\text{Ker}\CH_{\nn}}$ and $Q^{\perp}_1\in(\text{Ker}\CH_{\nn})^{\perp}$. From the equation $(\ref{ve 0-Q})$,
$Q_1^\perp$ can be determined by
\begin{align*}
    Q_1^\perp=-\CH_\nn^{-1}\Big(\mu_1\dot{Q}_0+\CL(Q_0)+\frac{\mu_2}{2}\DD_0-\mu_1[\BOm_0,Q_0]\Big)\in C([0,T];H^{\ell-1}).
\end{align*}

Taking the projection $\MP^{in}$ on both sides of ($\ref{ve 1-Q}$), we derive a linear system of $(Q_1^\top,\vv_1)$, which is given by
\begin{align}
   \mu_1\dot{Q}_1^\top=&~\MP^{in}\Big(-\CL(Q_1^\top)-\frac{\mu_2}{2}\DD_1+\mu_1[\BOm_1,Q_0] \Big)+\widetilde{L}(Q_1^\top,\vv_1)+\widetilde{R},\label{nu:L Q1}\\
    \dot{\vv}_1=&-\vv_1\cdot\nabla\vv_0-\nabla p_1+\nabla\cdot\Big(\beta_1Q_0(Q_0:\DD_1)+\beta_4\DD_1\nonumber\\&+\beta_5\DD_1\cdot Q_0+\beta_6Q_0\cdot Q_1+\beta_7(\DD_1\cdot Q_0^2+Q_0^2\cdot \DD_1)\nonumber\\&+\frac{\mu_2}{2}\big(\dot{Q}_1^\top-[\BOm_1,Q_0]\big)+\mu_1\big[Q_0,(\dot{Q}_1^\top-[\BOm_1,Q_0])\big]\nonumber\\
   & +\sigma^d({Q}_1^\top,Q_0)+\sigma^d(Q_0,Q_1^\top)+\widetilde{L}(Q_1^\top,\vv_1)+\widetilde{R}\Big),\label{nu:L v1}\\
    \nabla\cdot \vv_1=&~0.\label{nu:L 1div}
\end{align}
The next task is to show the following energy estimate:
\begin{equation}
    \frac{\ud}{\ud t}\widehat{\CE}_\ell(t)+c_1\widehat{\CF}_\ell(t)\leq C(\widehat{\CE}_\ell+\|\widetilde{R}\|_{H^{\ell-3}}^2), \label{CEk}
\end{equation}
where the energy functionals are defined by, respectively,
\begin{align*}
\widehat{\CE}_\ell(t) \stackrel{\text { def }}{=} &\frac{1}{2}\sum_{k=0}^{\ell-4}\left(\left\|\partial_i^k\mathbf{v}_1\right\|_{L^2}^2+\langle\CL(\partial_i^kQ_1^\top),\partial_i^kQ_1^\top\rangle\right)+\frac{\mu_1}{2}\|Q_1^\top\|_{L^2}^2,\\
 \widehat{\CF}_\ell(t) \stackrel{\text { def}}{=}& ~\sum_{k=0}^{\ell-4}\|\nabla\partial_i^k\vv_1\|_{L^2}^2.
\end{align*}
The energy inequality (\ref{CEk}) will ensure that the linear system (\ref{nu:L Q1})--(\ref{nu:L 1div}) has a unique solution $(\vv_1,Q_1^\top)\in C([0,T];H^{\ell-4}\times H^{\ell-3})$.
Without loss of generality, we only show (\ref{CEk}) for the case of $k=0$ and the proof is similar for the general case.
We set
\begin{align*}
\widehat{\CE}(t){=} \frac{1}{2}\int_{\mathbb{R}^3}\left(\left|\mathbf{v}_1\right|^2+\CL(Q_1^\top):Q_1^\top+\mu_1|Q_1^\top|^2\right)\mathrm{d} \mathbf{x},\quad
 \widehat{\CF}(t) {=}\int_{\mathbb{R}^3} |\nabla\vv_1|^2\ud\xx.
\end{align*}
Firstly, using equation (\ref{L Q1}) we derive that
\begin{align}
\frac{\mu_1}{2}\frac{\ud}{\ud t}\|Q_1^\top\|_{L^2}^2=&\nonumber\Big\langle\MP^{in}\Big(-\CL(Q_1^\top)-\frac{\mu_2}{2}\DD_1+\mu_1[\BOm_1,Q_0]\Big)+\widetilde{L}(Q_1^\top,\vv_1)+\widetilde{R},Q_1^\top \Big\rangle\\=&\nonumber\Big\langle-\CL(Q_1^\top)-\frac{\mu_2}{2}\DD_1+\mu_1[\BOm_1,Q_0]+\widetilde{L}(Q_1^\top,\vv_1)+\widetilde{R},Q_1^\top \Big\rangle\\\leq&~C(\widehat{\CE}+\widehat{\CE}^{\frac{1}{2}}\widehat{\CF}^{\frac{1}{2}}+\|\widetilde{R}\|_{L^2}^2).\label{nu:dt-Q1}
\end{align}
Multiplying (\ref{nu:L Q1}) by $\dot{Q}_1^\top$ and integrating over $\BR$, we have
\begin{align}
    \mu_1\|\dot{Q}_1^\top\|_{L^2}^2=\left\langle\MP^{in}\Big(-\CL(Q_1^\top)-\frac{\mu_2}{2}\DD_1+\mu_1[\BOm_1,Q_0]\Big)+\widetilde{L}(Q_1^\top,\vv_1)+\widetilde{R},\dot{Q}_1^\top\right\rangle.\label{nu:mu1-Q1*2}
\end{align}
Thanks to $\MP^{in}(\dot{Q}_1^\top)=\dot{Q}_1^\top+\widetilde{L}(Q_1^\top)$ and Lemma \ref{v*Q,L(Q)}, we get
\begin{align}
    &\left\langle\MP^{in}\Big(-\CL({Q^\top_1})-\frac{\mu_2}{2}\DD_1+\mu_1[\BOm_1,Q_0]\Big), \dot{Q}_1^{\top}\right\rangle\nonumber\\
    =&-\langle\CL({Q^\top_1})+\frac{\mu_2}{2}\DD_1-\mu_1[\BOm_1,Q_0], \MP^{in}(\dot{ Q}_1^\top)\rangle\nonumber\\
   \nonumber=&-\left\langle\CL({Q^\top_1})+\frac{\mu_2}{2}\DD_1-\mu_1[\BOm_1,Q_0], \partial_t {Q}_1^{\top}+\vv_0\cdot\nabla Q_1^\top+\widetilde{L}({Q^\top_1})\right\rangle
   \\ \leq&-\frac{1}{2}\frac{\ud}{\ud t}\left\langle\CL({Q^\top_1}), {Q}_1^{\top}\right\rangle-\langle\frac{\mu_2}{2}\DD_1-\mu_1[\BOm_1,Q_0],\dot{Q}_1^\top\rangle\nonumber\\&+C\big(\Vert {Q^\top_1}\Vert^2_{H^1}+\|Q_1^\top\|_{L^2}\|\nabla\vv_1\|_{L^2}\big).\label{nu:MP;in;CL}
\end{align}
Using the equation (\ref{nu:L Q1}), it holds that
\begin{align}
    &\langle \widetilde{L}(Q_1^\top,\vv_1)+\widetilde{R},\dot{Q}_1^\top\rangle\nonumber\\
    &\quad=\frac{1}{\mu_1}\left\langle \widetilde{L}(Q_1^\top,\vv_1)+\widetilde{R}, \MP^{in}\Big(-\CL(Q_1^\top)-\frac{\mu_2}{2}\DD_1+\mu_1[\BOm_1,Q_0]\Big)+\widetilde{L}(Q_1^\top,\vv_1)+\widetilde{R} \right\rangle\nonumber\\
    &\quad=\frac{1}{\mu_1}\Big\langle \MP^{in} \Big(\widetilde{L}(Q_1^\top,\vv_1)+\widetilde{R}\Big), -\CL(Q_1^\top)-\frac{\mu_2}{2}\DD_1+\mu_1[\BOm_1,Q_0]\Big\rangle+\frac{1}{\mu_1}\|\widetilde{L}(Q_1^\top,\vv_1)+\widetilde{R}\|_{L^2}^2\nonumber\\
    &\quad\leq C\Big(\|Q_1^\top\|_{H^1}^2+\|\vv_1\|^2_{L^2}+(\|Q_1^\top\|_{H^1}+\|\vv_1\|_{L^2})\|\vv\|_{H^1}+\|\widetilde{R}\|_{H^1}^2\Big).\label{nu:L+R}
\end{align}
From (\ref{nu:mu1-Q1*2})--(\ref{nu:L+R}), we deduce that
\begin{align}
    \frac{1}{2}\frac{\ud}{\ud t}\left\langle\CL({Q^\top_1}), {Q}_1^{\top}\right\rangle\leq& -\left\langle\frac{\mu_2}{2}\DD_1+\mu_1\dot{Q}_1^\top-\mu_1[\BOm_1,Q_0],\dot{Q}_1^\top\right\rangle\nonumber\\
    &+C(\widehat{\CE}+\widehat{\CE}^{\frac{1}{2}}\widehat{\CF}^{\frac{1}{2}}+\|\widetilde{R}\|^2_{H^1}).\label{nu:dt-CL}
\end{align}
Next, we need to estimate $\frac{1}{2}\frac{\ud}{\ud t}\Vert\vv_{1}\Vert^2_{L^2}$, it can be calculated as
\begin{align}
   \langle\vv_{1},\partial_t\vv_{1}\rangle
=&-
\Big\langle  \frac{\mu_{2}}{2} (\dot{Q}_1^{\top}-[{\BOm}_{1}, Q_{0}])
+\mu_1\big[Q_{0}, (\dot{Q}_1^{\top}-[{\BOm}_{1}, {Q}_0])\big],\nabla\vv_{1}\Big\rangle\nonumber\\&
-\Big\langle\sigma^{d}\big(Q^{\top}_1 ,  Q_{0}\big)+\sigma^{d}\big(Q^{\top}_1 ,  Q_{0}\big)
+\widetilde{L} ({Q^\top_1}, \vv_{1})
+\widetilde{R},\nabla\vv_{1}\Big\rangle\nonumber\\&-\Big\langle
    \Big\{\beta_1 Q_{0}(Q_{0}:\DD_{1})+\beta_7[(Q_{0})^2\cdot\DD_{1}+\DD_{1}\cdot (Q_{0})^2]+\beta_4\DD_{1}\nonumber\\&\quad+\beta_{5}\mathbf{D}_{1}\cdot Q_{0}+\beta_{6}Q_{0}\cdot\mathbf{D}_{1}\Big\}, \nabla\vv_{1}\Big\rangle
 -\left\langle\vv_{1}\cdot\nabla\vv_{0}, \vv_{1}\right\rangle\nonumber\\
\eqdefa&~\widehat{I}_1+\widehat{I}_2+\widehat{I}_3+\widehat{I}_4.\label{nu:dt-v1}
\end{align}
With respect to terms containing $\partial_t$ in (\ref{nu:dt-CL}) and (\ref{nu:dt-v1}), summing them leads to
\begin{align*}
&\widehat{I}_1-\left\langle\frac{\mu_2}{2}\DD_1+\mu_1\dot{Q}_1^\top-\mu_1[\BOm_1,Q_0],\dot{Q}_1^\top\right\rangle\\
&\quad=-\frac{\mu_{2}}{2}\langle\mathbf{D}_{1},  \dot{Q}_{1}^{\top}\rangle
-\frac{\mu_{2}}{2}\left\langle\dot {Q}_1^{\top}-\left[\boldsymbol{\Omega}_{1},  Q_{0}\right],  \mathbf{D}_{1}\right\rangle\\
&\qquad -\mu_{1}\left\langle\dot {Q}_1^{\top}-\left[\boldsymbol{\Omega}_{1},  Q_{0}\right],  \dot{Q}_1^{\top}\right\rangle-\mu_{1}\left\langle\left[Q_{0},  \boldsymbol{\Omega}_{1}\right],\dot{Q}_1^{\top}-\left[\boldsymbol{\Omega}_{1},  Q_{0}\right]\right\rangle \\
&\quad= -\mu_{2}\left\langle\dot {Q}_1^{\top}-\left[\boldsymbol{\Omega}_{1},  Q_{0}\right],  \mathbf{D}_{1}\right\rangle-\frac{\mu_{2}}{2}\left\langle\left[\boldsymbol{\Omega}_{1},  Q_{0}\right],  \mathbf{D}_{1}\right\rangle\\
&\qquad-\mu_{1}\left\|\dot{Q}_1^{\top}-\left[\boldsymbol{\Omega}_{1},  Q_{0}\right]\right\|_{L^{2}}^{2}  \\
&\quad= -\mu_{1}\left\|\dot{Q}_1^{\top}-\left[\boldsymbol{\Omega}_{1},  Q_{0}\right]+\frac{\mu_{2}}{2 \mu_{1}} \mathbf{D}_{1}\right\|_{L^{2}}^{2}+\frac{\mu_{2}^{2}}{4 \mu_{1}}\left\|\mathbf{D}_{1}\right\|_{L^{2}}^{2}\\
&\qquad-\frac{\mu_{2}}{2}\left\langle\left[\boldsymbol{\Omega}_{1},  Q_{0}\right],  \mathbf{D}_{1}\right\rangle  .
\end{align*}
For the terms $\widehat{I}_2$ and $\widehat{I}_4$, it can be handled as
\begin{align*}
   \widehat{I}_2+\widehat{I}_4\leq C\|\vv_{1}\|_{H^1}\big(\|\vv_{1}\|_{L^2}+\|Q^{\top}_{1}\|_{H^1}\big)+C\|\vv_1\|_{L^2}\|\widetilde{R}\|_{H^1}\leq C(\widehat{\CE}+\widehat{\CE}^{\frac{1}{2}}\widehat{\CF}^{\frac{1}{2}}+\|\widetilde{R}\|^2_{H^1}).
\end{align*}
Recalling the definition of the term $I_1+I_2$ in (\ref{I1+I2}), we can infer that
\begin{align*}
    \widehat{I}_3=&~I_1+I_2\\= &-\beta_{1} s^{2}\left\|\mathbf{n n}: \mathbf{D}_{1}\right\|_{L^{2}}^{2}-\left(\beta_{4}-\frac{s\left(\beta_{5}+\beta_{6}\right)}{3}+\frac{2}{9} \beta_{7} s^{2}\right)\left\|\mathbf{D}_{1}\right\|_{L^{2}}^{2} \\
& -\left(s\left(\beta_{5}+\beta_{6}\right)+\frac{2}{3} \beta_{7} s^{2}\right)\left\|\mathbf{n} \cdot \mathbf{D}_{1}\right\|_{L^{2}}^{2}+\frac{\mu_{2}}{2}\left\langle\left[\boldsymbol{\Omega}_{1},  Q_{0}\right],  \mathbf{D}_{1}\right\rangle\\
=&-2\delta\|\nabla\vv_1\|_{L^2}^2-\overline{\beta}_{1} \left\|\mathbf{n n}: \mathbf{D}_{1}\right\|_{L^{2}}^{2}-\Big(\overline{\beta}_2+\frac{\mu^2_2}{4\mu_1}\Big)\left\|\mathbf{D}_{1}\right\|_{L^{2}}^{2}\\  &-\overline{\beta}_3\left\|\mathbf{n} \cdot \mathbf{D}_{1}\right\|_{L^{2}}^{2}+\frac{\mu_{2}}{2}\left\langle\left[\boldsymbol{\Omega}_{1},  Q_{0}\right],  \mathbf{D}_{1}\right\rangle.
\end{align*}
Here $\delta>0$ is a small enough constant that had been determined in last section, and the coefficients $\overline{\beta}_i(i=1, 2, 3)$ are defined by (\ref{relation overlinebeta}).
From (\ref{nu:dt-Q1}), (\ref{nu:dt-CL}) and (\ref{nu:dt-v1}), we obtain
\begin{align*}
    &\frac{\ud}{\ud t}\widehat{\CE}(t)+2\delta\|\nabla\vv_1\|_{L^2}^2+\mu_{1}\big\|\dot{Q}_1^{\top}-\left[\boldsymbol{\Omega}_{1},  Q_{0}\right]+\frac{\mu_{2}}{2 \mu_{1}} \mathbf{D}_{1}\big\|_{L^{2}}^{2}\\
    &\quad\leq C\big(\widehat{\CE}+\|\widetilde{R}\|^2_{H^1}+\widehat{\CE}^{\frac{1}{2}}\widehat{\CF}^{\frac{1}{2}}\big)-\overline{\beta}_{1} \left\|\mathbf{n n}: \mathbf{D}_{1}\right\|_{L^{2}}^{2}-\overline{\beta}_2\left\|\mathbf{D}_{1}\right\|_{L^{2}}^{2}  -\overline{\beta}_3\left\|\mathbf{n} \cdot \mathbf{D}_{1}\right\|_{L^{2}}^{2}.
\end{align*}
Hence, similar to the argument of Proposition \ref{prop Q(1)}, taking $c_1=\min\{\delta,1\}>0$, we obtain the following energy estimate:
\begin{align}
    \frac{\ud}{\ud t}\widehat{\CE}(t)+c_1\widehat{\CF}(t)\leq C\big(\widehat{\CE}(t)+\|\widetilde{R}\|^2_{H^1}\big),
\end{align}
which leads to the completion of Proposition \ref{nu:prop Q(1)}.
\end{proof}

\subsection{Uniform estimates for the remainder system}

When $m\in\mathbb{Z}^+$, the remainder system {\rm (\ref{eq-QR})--(\ref{eq-vR-div})} is still hyperbolic. The main object of this subsection is to derive the uniform energy estimate for the remainder system. The whole procedure is similar to the argument of  Proposition \ref{proposition-PP}. Throughout this subsection, we assume that $(\nabla Q_k,\vv_k)\in C([0,T];H^{\ell-4k})(k=0,1,2)$ and $Q_3\in C([0,T];H^{\ell-11})$. We denote by $C$ a constant depending on $\sum^2_{k=0}\sup_{t\in[0,T]}\|\vv_k(t)\|_{\ell-4k}$ and $\sum^3_{k=0}\sup_{t\in[0,T]}\|Q_k(t)\|_{H^{\ell+1-4k}}$, and independent of $\ve$. To obtain the uniform energy estimate, the energy functionals are defined by (\ref{energyE})--(\ref{energyF}) with $m\in\mathbb{Z}^+$. The a priori estimate for the remainder term $(\vv_R,Q_R,\PP)$ is stated as follows.

\begin{proposition}\label{nu:proposition-PP}
    Let $(\vv_R,Q_R,\PP)$ be a smooth solution of the remainder system {\rm (\ref{eq-QR})--(\ref{eq-vR-div})} with $m\in\mathbb{Z}^+$ on $[0,T]$. Then for any $t\in[0,T]$, it follows that
   \[\begin{aligned}
       &{\fE}_{m}(t)-3\fE(0)+{c_1}\int_0^t\fF(\tau)\ud\tau\\&\leq CM^2\int_0^t\Big(\big(1+{\fE}_{m}(\tau)+\ve^2{\fE}_{m}^2(\tau)+\ve^6{\fE}_{m}^3(\tau)\big)+\fF(\tau)\big(\ve+\ve{\fE}_{m}^{\frac{1}{2}}(\tau)+\ve^4{\fE}_{m}(\tau)\big)\Big)\ud\tau,
   \end{aligned}\]
   where $c_1=\min\{\delta,1\}>0$, and the constant $C>0$ is independent of $(\ve,M)$.
\end{proposition}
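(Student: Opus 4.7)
The plan is to run the same three-step (namely $L^2$, $H^1$, $H^2$) energy scheme used in the proof of Proposition \ref{proposition-PP}, paying particular attention to the new $\ve^m$ scaling in front of the inertial quantities. First I would test (\ref{eq-QR}) against $\PP$ and (\ref{eq-vR}) against $\vv_R$ to obtain the $L^2$-level identity; the left-hand side now produces $\tfrac12\tfrac{\ud}{\ud t}(\|\vv_R\|_{L^2}^2+\ve^m J\|\PP\|_{L^2}^2)$, while the right-hand side splits into the seven terms $J_1,\dots,J_7$ exactly as in Step~1 of Proposition \ref{proposition-PP}. The Qian--Sheng viscous structure is untouched by the $\ve^m$ rescaling, so the Parodi-type cancellations among $J_1,\dots,J_6$ reproduce the key dissipation $-c_1\fF_0 -\mu_1\|\UU_0\|_{L^2}^2$ (with $\UU_0$ as in (\ref{UU0}) and $c_1=\min\{\delta,1\}$). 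Then $J_7$, the singular term $-\tfrac1{\ve}\langle\CH^\ve_\nn(Q_R),\PP\rangle$, is handled by the $m\in\mathbb{Z}^+$ analogue Lemma \ref{nu:A key lemma}, which produces $-\tfrac1{2\ve}\tfrac{\ud}{\ud t}\langle\CH^\ve_\nn(Q_R),Q_R\rangle$ plus the auxiliary time-derivative of $\ve^m\CA(Q_R,\PP)$ and an acceptable remainder.

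Next I would differentiate the system once (resp.\ twice), pair $\partial_i$-equations with $\partial_i\PP$ and $\partial_i\vv_R$ (resp.\ $\Delta$-equations with $\Delta\PP$ and $\Delta\vv_R$), and multiply the result by $\ve^2$ (resp.\ $\ve^4$). The commutator $[\Delta,\vv^\ve\cdot\nabla]\PP$ term appearing on the transport side is controlled as in Step~3 of Proposition \ref{proposition-PP}, and the same six-term dissipation scheme yields the $H^1$ and $H^2$ analogues of $\UU_1,\UU_2$ in (\ref{UU1})--(\ref{UU2}). The remainder forces $\FF_R$, $\GG_R$, $\GG'_R$ are bounded by a Lemma \ref{nu:norm:FR}-type estimate; note that the coefficient $\ve^{m-3}J$ inside $\FF_1$ is always balanced by the constraint $i+m\ge3$ (and analogously for the other sums), so no new singular $\ve$-power arises. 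Summing the three levels gives an inequality of the form
\begin{equation*}
\frac{\ud}{\ud t}\bigl(\fE_m(t)+\ve^m\CA(Q_R,\PP)\bigr)+c_1\fF(t)\le CM^2\bigl(1+\fE_m+\ve^2\fE_m^2+\ve^6\fE_m^3\bigr)+CM\fF\bigl(\ve+\ve\fE_m^{1/2}+\ve^4\fE_m\bigr)+\tfrac{c_1}{2}\fF.
\end{equation*}
The cubic power $\ve^6\fE_m^3$ (absent in the $m=0$ case) is expected to appear from the $\FF_3,\GG_3$ contributions where $\vv_R\cdot\nabla Q_R$ and similar cubic-in-remainder pieces interact with the now small $\ve^m J$ factor, and it is harmless because it comes with enough powers of $\ve$.

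To turn this differential inequality into the stated integral bound, the final ingredient is the positivity of the modified energy $\fE_m+\ve^m\CA$. Using the explicit form of $\CA$ in (\ref{CA-t}) together with Proposition \ref{linearized-oper-prop}, I would estimate
\begin{equation*}
\ve^m|\CA(Q_R,\PP)|\le \tfrac14\bigl(\ve^m J\|\PP\|_{L^2}^2+\ve^m C_1(\nn,\tilde\vv)\|Q_R\|_{L^2}^2\bigr)\le \tfrac14\bigl(\ve^m J\|\PP\|_{L^2}^2+C_1\|Q_R\|_{L^2}^2\bigr),
\end{equation*}
and then choose $M\ge\max\{1,C_1\}$ so that $\ve^m|\CA(Q_R,\PP)|\le \tfrac12\fE_m(t)$. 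Integrating in time on $[0,t]$ and absorbing $\tfrac{c_1}{2}\fF$ into the left-hand side finishes the proof.

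The main obstacle I anticipate is Lemma \ref{nu:A key lemma}, namely the correct treatment of the singular term when the inertial coefficient is itself small. In Lemma \ref{A key lemma} the identity $J\dot{\CH}_\nn^{-1}(\mathbf{A}(Q_R^\top))$ was absorbed into $\tfrac{\ud}{\ud t}\CA$ by using the equation (\ref{eq-QR}); in the present case the coefficient $\ve^m J$ multiplying $(\partial_t+\vv^\ve\cdot\nabla)\PP$ is small, so one must instead substitute for $\ve^{-1}\CH^\ve_\nn(Q_R)$ directly from (\ref{eq-QR}), exposing the dissipation $\mu_1\UU_0$ and the good forcing $\FF_R$, and simultaneously verify that the new $\ve^m$-dependent auxiliary term $\ve^m\CA$ still has size bounded by $\fE_m$ rather than by $J\|\PP\|_{L^2}^2$ alone. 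Once this bookkeeping is done, the rest of the proof is a close parallel of Proposition \ref{proposition-PP}, with only the two harmless upgrades: the $\ve^m$ weight on the inertial terms, and the extra cubic $\ve^6\fE_m^3$ on the right-hand side.
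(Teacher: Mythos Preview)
Your plan is correct and matches the paper's proof almost exactly: the same three-step $L^2/H^1/H^2$ scheme, the same dissipation identities for $J_1,\dots,J_6$ (and their $K_i$, $V_k$ analogues), Lemma~\ref{nu:A key lemma} for the singular term, and the choice of $M$ to absorb $\ve^m\CA$. One small correction worth making: the new cubic term $\ve^6\fE_m^3$ does not arise from any interaction with the ``small $\ve^m J$ factor'', but rather from the fact that in the $m\in\mathbb{Z}^+$ regime $\|\PP\|_{L^2}$ is controlled by $\fF^{1/2}$ (via $\UU_0$ in Lemma~\ref{nu:norm:QR,vR}) rather than by $\fE_m^{1/2}$, so $\langle\FF_R,\PP\rangle$ is now paired as $\|\FF_R\|_{L^2}\fF^{1/2}$; the $\ve^3\fE_m^{3/2}$ piece of $\|\FF_R\|_{L^2}$ then yields $\ve^6\fE_m^3$ after Young's inequality. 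Likewise, the substitution in Lemma~\ref{nu:A key lemma} is identical to that in Lemma~\ref{A key lemma} (insert (\ref{eq-QR}) into $\ve^{-1}\langle\CH_\nn^{-1}(\A(Q_R^\top)),\CH_\nn(Q_R)\rangle$); the only change is that the inertial contribution now carries the prefactor $\ve^m$, which is exactly what produces $\ve^m\CA$ and makes the positivity argument go through without any strong coefficient assumption.
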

Similar to the proof of Lemmas \ref{norm:QR,vR}--\ref{norm:vR}, we also have the following Lemma \ref{nu:norm:QR,vR} and Lemma \ref{nu:norm:FR}.

\begin{lemma} \label{nu:norm:QR,vR}
There exists a positive constant $C$ independent of $(\ve,M)$, such that
\begin{align*}
  & \|Q_R\|_{H^1}+\|(\ve\nabla^2Q_R,\ve^2\nabla^3Q_R)\|_{L^2}+\|(\vv_R,\ve\nabla\vv_R,\ve^2\nabla^2\vv_R)\|_{L^2}\leq C{\fE}_{m}^{\frac{1}{2}},\\
    &\|(\nabla\vv_R,\ve\nabla^2\vv_R,\ve^2\nabla^3\vv_R\|_{L^2}+\|(\PP,\ve\nabla\PP,\ve^2\nabla^2\PP\|_{L^2}\leq C\fF^{\frac{1}{2}},\\
    &\|(\dot{Q}_R,\ve\nabla\dot{Q}_R,\ve^2\nabla^2\dot{Q}_R)\|_{L^2}\leq C({\fE}_{m}^{\frac{1}{2}}+\ve{\fE}_{m}+\fF^{\frac{1}{2}}),\\
    &\ve^{\frac{m}{2}}\|(\dot{Q}_R,\ve\nabla\dot{Q}_R,\ve^2\nabla^2\dot{Q}_R)\|_{L^2}\leq C({\fE}_{m}^{\frac{1}{2}}+\ve\fE_m),\\
&\ve^{\frac{m}{2}}\|(\PP,\ve\nabla\PP,\ve^2\nabla^2\PP)\|_{L^2}\leq C{\fE}_{m}^{\frac{1}{2}}.
\end{align*}
\end{lemma}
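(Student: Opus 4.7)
The plan is to read off each of the five inequalities directly from the definitions of $\fE_m$ and $\fF$ in (\ref{energyE})--(\ref{energyF}), combined with the algebraic identities $\PP=\UU_0+[\BOm_R,Q_0]-\frac{\mu_2}{2\mu_1}\DD_R$ and $\dot{Q}_R=\PP-\vv_R\cdot\nabla Q^\ve$, and with the ellipticity bound (\ref{L:postive}) for $\CL$. The structure is entirely parallel to Lemma \ref{norm:QR,vR}; the only genuinely new feature for $m\in\mathbb{Z}^+$ is that the weight on $|\PP|^2$ in $\fE_m$ is $\ve^mJ$, so that $\fE_m^{1/2}$ controls only $\ve^{m/2}\|\PP\|_{L^2}$ and one must therefore go through $\fF$ (through $\UU_0,\UU_1,\UU_2$) to obtain the unweighted $\PP$ bound appearing in estimate (2).

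First I would handle (1) and (5). Using $\CH_\nn^\ve(Q_R)=\CH_\nn(Q_R)+\ve\CL(Q_R)$, writing $Q_R=Q_R^\top+Q_R^\perp$, and invoking Proposition \ref{linearized-oper-prop}(ii) together with (\ref{L:postive}) after integrating $\CL$ by parts, the three blocks $\fE_{m,0},\fE_{m,1},\fE_{m,2}$ immediately yield $\|Q_R\|_{L^2}+\|\nabla Q_R\|_{L^2}\leq C\fE_m^{1/2}$, $\|\ve\nabla^2 Q_R\|_{L^2}\leq C\fE_m^{1/2}$, and $\|\ve^2\nabla^3 Q_R\|_{L^2}\leq C\fE_m^{1/2}$, with the $\|\vv_R\|_{L^2}$--type pieces falling out of the $|\vv_R|^2,\ve^2|\nabla\vv_R|^2,\ve^4|\Delta\vv_R|^2$ contributions. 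The weighted $\PP$ bound (5) is read off from the $\ve^mJ|\PP|^2,\ve^{m+2}J|\nabla\PP|^2,\ve^{m+4}J|\Delta\PP|^2$ terms in $\fE_m$.

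Next I would treat (2). The $\nabla\vv_R$ contributions are immediate from the $|\nabla\vv_R|^2,\ve^2|\nabla\partial_i\vv_R|^2,\ve^4|\nabla\Delta\vv_R|^2$ pieces of $\fF$. For $\PP$, I would solve the definition of $\UU_k$ in (\ref{UU0})--(\ref{UU2}) for $\PP,\partial_i\PP,\Delta\PP$ and estimate
\begin{align*}
\|\PP\|_{L^2}&\leq\|\UU_0\|_{L^2}+C(\|Q_0\|_{L^\infty}+1)\|\nabla\vv_R\|_{L^2}\leq C\fF^{1/2},
\end{align*}
with analogous $\ve$-weighted versions for $\|\ve\nabla\PP\|_{L^2}$ and $\|\ve^2\Delta\PP\|_{L^2}$, using the product rule and the smoothness of $Q_0$ to absorb the commutators $[\partial_i\BOm_R,Q_0],[\Delta\BOm_R,Q_0]$ into $\fF^{1/2}$.

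Finally I would deduce (3) and (4) from $\dot{Q}_R=\PP-\vv_R\cdot\nabla Q^\ve$ with $Q^\ve=\widetilde{Q}+\ve^3Q_R$. The linear part is bounded by the $\PP$-estimate just obtained; the nonlinear part is handled by (\ref{fg Hk}) and the already-proved bounds on $\vv_R$ and $\nabla Q_R$, observing that
\begin{align*}
\ve^3\|\vv_R\cdot\nabla Q_R\|_{L^2}&\leq C\ve^3\|\vv_R\|_{H^2}\|\nabla Q_R\|_{L^2}\leq C\ve\cdot\ve^{-2}\fE_m^{1/2}\cdot\fE_m^{1/2}=C\ve\fE_m,
\end{align*}
which combined with $\|\PP\|_{L^2}\leq C\fF^{1/2}$ and $\|\vv_R\|_{L^2}\|\nabla\widetilde{Q}\|_{L^\infty}\leq C\fE_m^{1/2}$ gives (3); the $\ve$-weighted higher-derivative versions follow by the same scheme, and for (4) we simply multiply through by $\ve^{m/2}\leq 1$ and use (5) in place of the $\fF^{1/2}$ bound on $\PP$. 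The only real bookkeeping challenge is tracking the exact powers of $\ve$ in the Sobolev interpolations for $\|\vv_R\|_{H^2},\|\nabla Q_R\|_{H^2}$ so that each nonlinear contribution lands in one of the three allowed terms $\fE_m^{1/2},\ve\fE_m,\fF^{1/2}$; everything else is essentially a restatement of definitions.
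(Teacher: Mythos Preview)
Your proposal is correct and follows essentially the same route as the paper: the paper also reads off the $Q_R,\vv_R$ and $\ve^{m/2}\PP$ bounds directly from $\fE_m$, recovers the unweighted $\PP$ estimate by solving $\UU_k$ for $\PP,\partial_i\PP,\Delta\PP$ and using the $\nabla\vv_R$ pieces of $\fF$, and then obtains the $\dot{Q}_R$ bounds from $\dot{Q}_R=\PP-\vv_R\cdot\nabla Q^\ve$ via (\ref{fg Hk}). Your identification of the only new point for $m\in\mathbb{Z}^+$---that $\fE_m$ carries the weight $\ve^m$ on $|\PP|^2$, forcing the unweighted bound to come through $\fF$---is exactly the crux of the lemma.
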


\begin{proof}
    Armed with (\ref{fg Hk}), it is easy to estimate that
    \begin{align*}
      \|(\PP,\ve\nabla\PP,\ve^2\Delta\PP)\|_{L^2}\leq& C(\|\ve^k{\UU}_k\|_{L^2}+\|\ve^k\nabla\vv_R\|_{H^k})\leq C\fF^{\frac{1}{2}},\\
\|(\dot{Q}_R,\ve\nabla\dot{Q}_R,\ve^2\Delta\dot{Q}_R)\|_{L^2}\leq &C(\|\ve^k\PP\|_{H^k}+\|\ve^k\vv_R\|_{H^k}\|\nabla Q^\ve\|_{H^2})\\
\leq& C({\fE}_{m}^{\frac{1}{2}}+\ve{\fE}_{m}+\fF^{\frac{1}{2}}),\\
\ve^{\frac{m}{2}}\|(\dot{Q}_R,\ve\nabla\dot{Q}_R,\ve^2\Delta\dot{Q}_R)\|_{L^2}\leq &C(\ve^{\frac{m}{2}}\|\ve^k\PP\|_{H^k}+\|\ve^k\vv_R\|_{H^k}\|\nabla Q^\ve\|_{H^2})\\
\leq& C({\fE}_{m}^{\frac{1}{2}}+\ve{\fE}_{m}),
    \end{align*}
which imply immediately that the lemma follows.
\end{proof}

\begin{lemma}\label{nu:norm:FR}
For the remainder terms $\FF_R, \GG_R$ and $\GG_R'$, it follows that
    \begin{align*}
        \|(\FF_R,\ve\nabla\FF_R,\ve^2\Delta\FF_R)\|_{L^2}\leq&~ C\Big(1+{\fE}_{m}^{\frac{1}{2}}+\ve{\fE}_{m}+\ve^3{\fE}_{m}^{\frac{3}{2}}+\ve\fF^{\frac{1}{2}}+\ve{\fE}_{m}^{\frac{1}{2}}\fF^{\frac{1}{2}}
    \Big),\\
\|(\GG_R,\ve\nabla\GG_R,\ve^2\Delta\GG_R)\|_{L^2}\leq&~C(1+{\fE}_{m}^{\frac{1}{2}}+\ve^2{\fE}_{m}+\ve\fF^{\frac{1}{2}}+\ve^2{\fE}_{m}^{\frac{1}{2}}\fF^{\frac{1}{2}}+\ve^4{\fE}_{m}\fF^{\frac{1}{2}}),\\
\|(\GG_R',\ve\nabla\GG_R',\ve^2\Delta\GG_R')\|_{L^2}\leq&~C(1+{\fE}_{m}^{\frac{1}{2}}+\fF^{\frac{1}{2}}+\ve{\fE}_{m}^{\frac{1}{2}}\fF^{\frac{1}{2}}).
    \end{align*}
\end{lemma}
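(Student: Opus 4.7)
The plan is to prove Lemma \ref{nu:norm:FR} by a direct term-by-term estimation that exactly parallels the arguments in Lemmas \ref{norm:FR} and \ref{norm:vR} for the case $m=0$, substituting the new bounds in Lemma \ref{nu:norm:QR,vR} in place of those in Lemma \ref{norm:QR,vR}. The two essential tools are the algebra inequality $\|fg\|_{H^k} \leq C\|f\|_{H^2}\|g\|_{H^k}$ for $k=0,1,2$ from \eqref{fg Hk}, together with the a priori control of $(Q_R,\vv_R,\PP)$ and their derivatives in terms of $\fE_m^{1/2}$ and $\fF^{1/2}$ provided by Lemma \ref{nu:norm:QR,vR}. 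Since $(Q_k,\vv_k)$ for $k=0,1,2$ and $Q_3$ are known smooth functions with $H^\ell$ bounds from Proposition \ref{nu:prop Q(1)}, all norms of $\widetilde{Q}$, $\widehat{Q}^\ve$, $\tilde{\vv}$, $\widetilde{\BOm}$, $\widetilde{\DD}$ contribute only $\ve$-independent constants absorbed into $C$.

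For the $\FF_R$ bound I would split $\FF_R=\FF_1+\FF_2+\FF_3$ and estimate each piece in $H^k$ for $k=0,1,2$ weighted by $\ve^k$. The key observation for $\FF_1$ is that every summand carries a factor $\ve^{i+m-3}$, $\ve^{i+j+m-3}$, $\ve^{i+j+k+m-3}$, or $\ve^{i+j-3}$ with a nonnegative exponent (by the index constraints $i+m\geq 3$, etc.), so $\|\ve^k\partial_i^k\FF_1\|_{L^2}\leq C$ uniformly in $\ve$. For $\FF_2$, the products $\widetilde{\BOm}\cdot Q_R$, $\widehat Q^\ve\cdot Q_R$, $\widehat Q^\ve\cdot\widehat Q^\ve\cdot Q_R$ each yield a contribution $\lesssim \fE_m^{1/2}$, the term $\ve\mu_1[\BOm_R,\widehat Q^\ve]$ yields $\lesssim \ve\fF^{1/2}$, and the term $\ve^m J\vv_R\cdot\nabla(\partial_t\widetilde Q+\tilde{\vv}\cdot\nabla\widetilde Q)$ is even better behaved because of the $\ve^m$ prefactor ($m\geq 1$). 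For $\FF_3$, the quadratic $\CB, \CC$ terms give $\ve^2\|Q_R\|_{H^2}\|\ve^k Q_R\|_{H^k}\lesssim \ve\fE_m$, the cubic $\CC(Q_R,Q_R,Q_R)$ term gives $\ve^3\fE_m^{3/2}$, and $\ve^3\mu_1[\BOm_R,Q_R]$ gives $\ve\fE_m^{1/2}\fF^{1/2}$.

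For $\GG_R=\GG_1+\GG_2+\GG_3$ and $\GG_R'$, the same strategy applies. In $\GG_1$, every term has its $\ve$-exponent controlled by the index constraint $i+j(+k)\geq 3$, so $\|\ve^k\partial_i^k\GG_1\|_{L^2}\leq C$. In $\GG_2$, every term is linear in $(Q_R,\vv_R,\PP)$ with smooth coefficients, producing contributions of order $\fE_m^{1/2}$ or $\ve\fF^{1/2}$. In $\GG_3$, the $\ve^3$ prefactor makes the cubic and quartic terms produce $\ve^2\fE_m$, $\ve^2\fE_m^{1/2}\fF^{1/2}$, or $\ve^4\fE_m\fF^{1/2}$ after using \eqref{fg Hk} and Lemma \ref{nu:norm:QR,vR}. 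For $\GG_R'$, the only genuinely nonlinear term is $\ve^3\vv_R\cdot\nabla\vv_R$, which contributes $\ve\fE_m^{1/2}\fF^{1/2}$; the remaining pieces are linear in $\vv_R$ or purely in known functions, giving $1+\fE_m^{1/2}+\fF^{1/2}$.

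I expect no serious obstacle: the proof is mostly a careful indexed bookkeeping. The delicate point is the uniform boundedness of $\FF_1$ and $\GG_1$, which hinges entirely on the combinatorial observation that every Hilbert-expansion summand appearing in them carries a nonnegative power of $\ve$. Once that is checked summand by summand (exploiting $i+m\geq 3$ and the analogous constraints), the remaining estimates follow mechanically by interpolating in $H^2$ with \eqref{fg Hk} and invoking the $\ve$-weighted norms assembled in Lemma \ref{nu:norm:QR,vR}. Summing the three resulting bounds completes the proof.
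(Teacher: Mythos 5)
Your proposal is correct and takes essentially the same approach the paper intends: the paper itself offers no detailed argument for this lemma, only the one-line remark that it is an analogue of Lemmas \ref{norm:FR}--\ref{norm:vR}, so your term-by-term verification is exactly the unfolding of that remark. The one place where the $m\in\mathbb{Z}^+$ case genuinely differs from $m=0$ is that $\PP$ is now controlled by $\fF^{1/2}$ rather than $\fE^{1/2}$; you handle this correctly, and it works out because $\PP$ enters $\GG_2$ and $\GG_3$ only with $\ve$ and $\ve^3$ prefactors, producing the $\ve\fF^{1/2}$ and $\ve^2\fE_m^{1/2}\fF^{1/2}$ contributions that are already present on the right-hand side, while $\FF_R$ and $\GG_R'$ contain no $\PP$-dependence at all. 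Your observation that the $\ve^m$ prefactor on the $\vv_R\cdot\nabla(\partial_t\widetilde Q+\tilde\vv\cdot\nabla\widetilde Q)$ term in $\FF_2$ makes that contribution even smaller for $m\geq 1$, and that the combinatorics of the index constraints $i+m\geq3$, $i+j\geq3$, $i+j+k\geq3$ keep every $\ve$-exponent in $\FF_1$ and $\GG_1$ nonnegative, are precisely the points the paper's terse remark is relying on. The bookkeeping you sketch matches the paper's intended argument.
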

Lemma \ref{nu:norm:FR} is an analogue of Lemmas \ref{norm:FR}--\ref{norm:vR}, whose proofs are based on (\ref{fg Hk}) and the definitions of functionals (\ref{energyE})--(\ref{energyF}).

To deal with the singular term $\ve^{-1}\langle\CH^{\ve}_{\nn}(Q_R),\PP\rangle$, where $\PP=\dot{Q}_R+\vv_R\cdot\nabla Q^\ve$, we also need to establish the following key estimates:

\begin{lemma}\label{nu:A key lemma}
Assume that $(\vv_R,Q_R,\PP)$ is a smooth solution of the remainder system {\rm (\ref{eq-QR})--(\ref{eq-vR-div})} with $m\in\mathbb{Z}^+$. Then there exists a positive constant $C$ depending on $\nn,\nabla_{t,x}\nn,\tilde{\vv}$ and $\widetilde{Q}$, such that
\begin{align}
- \langle\frac{1}{\ve}\CH_{\nn}^\ve({Q}_R),\PP\rangle\leq &~\frac{\ud}{\ud t}\Big(-\frac{1}{2\ve}\langle\CH_\nn^\ve(Q_R),Q_R\rangle-\ve^m\CA(Q_R,\PP)\Big)\nonumber\\&+C({\fE}_{m}+\ve{\fE}_{m}^{\frac{3}{2}}+{\fE}_{m}^{\frac{1}{2}}\fF^{\frac{1}{2}}+\ve{\fE}_{m}\fF^{\frac{1}{2}}+{\fE}_{m}^{\frac{1}{2}}\|\FF_R\|_{L^2}),\label{nu:CH,Q0}\\
    -\ve\langle\partial_i\CH_{\nn}^{\ve}(Q_R),\partial_i\PP\rangle
    \leq&-\frac{\ve}{2}\frac{\ud}{\ud t}\big\langle\CH_{\nn}^\ve(\partial_iQ_R),\partial_i{Q}_R\big\rangle+C({\fE}_{m}+\ve{\fE}_{m}^{\frac{3}{2}}+{\fE}_{m}^{\frac{1}{2}}\fF^{\frac{1}{2}}+\ve{\fE}_{m}\fF^{\frac{1}{2}}),\label{nu:CH,Q1}\\
     -\ve^3\langle\Delta\CH_{\nn}^{\ve}(Q_R),\Delta\PP\rangle
     \leq&-\frac{\ve^3}{2}\frac{\ud}{\ud t}\big\langle\CH_{\nn}^\ve(\Delta Q_R),\Delta Q_R\big\rangle+C\big({\fE}_{m}+\ve{\fE}_{m}^{\frac{3}{2}}+{\fE}_{m}^{\frac{1}{2}}\fF^{\frac{1}{2}}+\ve{\fE}_{m}\fF^{\frac{1}{2}}\big),\label{nu:CH,Q2}
\end{align}
where $\CA(Q_R,\PP)$ is defined by {\em(\ref{CA-t})} and  $\FF_R$ is given by {\em (\ref{FFR-remaider-term})}.
\end{lemma}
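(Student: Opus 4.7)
The plan is to follow the strategy of Lemma \ref{A key lemma}, carefully tracking the new $\varepsilon^m$ weight that now multiplies the inertial term in the remainder equation \eqref{eq-QR}. For \eqref{nu:CH,Q0}, I would first split $\PP = \dot{Q}_R + \vv_R\cdot\nabla Q^\ve$ and write $\vv_R\cdot\nabla Q^\ve = \vv_R\cdot\nabla Q_0 + \vv_R\cdot\nabla(\varepsilon\widehat{Q}^\ve+\varepsilon^3 Q_R)$. The first summand lies in $\mathrm{Ker}\,\CH_\nn$ so it annihilates $\CH_\nn(Q_R)/\varepsilon$; its pairing with $\CL(Q_R)$ and the two remaining convective contributions carry enough prefactor in $\varepsilon$ to absorb the singular weight, and by Lemma \ref{v*Q,L(Q)} together with Lemma \ref{nu:norm:QR,vR} they produce contributions bounded by $C(\fE_m+\varepsilon\fE_m^{3/2}+\fE_m^{1/2}\fF^{1/2}+\varepsilon\fE_m\fF^{1/2})$.

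For the $\dot{Q}_R$ piece, pulling the time derivative out yields
\[
-\Big\langle\frac{1}{\varepsilon}\CH_\nn^\ve(Q_R),\dot{Q}_R\Big\rangle = -\frac{1}{2\varepsilon}\frac{\ud}{\ud t}\langle\CH_\nn^\ve(Q_R),Q_R\rangle + \frac{1}{2\varepsilon}\big\langle[\partial_t+\tilde{\vv}\cdot\nabla,\CH_\nn]Q_R,Q_R\big\rangle - \langle\CL(Q_R),\tilde{\vv}\cdot\nabla Q_R\rangle.
\]
Decomposing $Q_R = Q_R^\top + Q_R^\perp$ and invoking the explicit formula \eqref{CH-nn}, the commutator reduces to $\frac{1}{\varepsilon}\langle\mathbf{A}(Q_R^\top),Q_R^\perp\rangle$ up to a harmless quadratic in $Q_R^\perp$ that is controlled by $C\fE_m$ via the coercivity in Proposition \ref{linearized-oper-prop}(ii). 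Since $Q_R^\perp = \CH_\nn^{-1}\CH_\nn(Q_R)$, this singular cross-term rewrites as $\langle\CH_\nn^{-1}(\mathbf{A}(Q_R^\top)),\CH_\nn(Q_R)/\varepsilon\rangle$.

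The decisive step is to eliminate the residual $\varepsilon^{-1}$ using \eqref{eq-QR}, which gives
\[
\frac{1}{\varepsilon}\CH_\nn^\ve(Q_R) = -\varepsilon^m J(\partial_t+\vv^\ve\cdot\nabla)\PP - \mu_1\UU_0 + \FF_R.
\]
The contribution of $-\varepsilon^m J(\partial_t+\vv^\ve\cdot\nabla)\PP$ is integrated by parts in time and space using $\nabla\cdot\vv^\ve=0$, producing exactly $-\frac{\ud}{\ud t}(\varepsilon^m\CA(Q_R,\PP))$ plus a commuted term $\varepsilon^m J\langle(\partial_t+\vv^\ve\cdot\nabla)\CH_\nn^{-1}(\mathbf{A}(Q_R^\top)),\PP\rangle$. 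The hard part is controlling this commuted term: unlike the $m=0$ case, $\PP$ is no longer bounded by $\fE^{1/2}$; the only uniform estimate is $\varepsilon^{m/2}\|\PP\|_{L^2}\leq C\fE_m^{1/2}$. I would split $\varepsilon^m = \varepsilon^{m/2}\cdot\varepsilon^{m/2}$, assign one half-power to $\PP$ to extract $\fE_m^{1/2}$, and use the remaining $\varepsilon^{m/2}$ on the coefficient $(\partial_t+\vv^\ve\cdot\nabla)\CH_\nn^{-1}(\mathbf{A}(Q_R^\top))$, whose $L^2$-norm is bounded by $C(\|Q_R\|_{L^2}+\|\dot{Q}_R\|_{L^2}+\varepsilon^3\|\vv_R\|_{H^2}\|Q_R\|_{H^1})$ through Lemma \ref{nu:norm:QR,vR}. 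This yields a total bound consistent with \eqref{nu:CH,Q0}, while the contributions of $\mu_1\UU_0$ and $\FF_R$ produce the $\fE_m^{1/2}\fF^{1/2}$ and $\fE_m^{1/2}\|\FF_R\|_{L^2}$ terms directly.

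Finally, \eqref{nu:CH,Q1} and \eqref{nu:CH,Q2} are structurally simpler: the explicit prefactors $\varepsilon$ and $\varepsilon^3$ already cancel the singularity, so no correction analogous to $\varepsilon^m\CA$ is required. One applies $\partial_i$ respectively $\Delta$ to $\CH_\nn^\ve(Q_R)$, pulls out the time derivative of $\langle\CH_\nn^\ve(\partial_i Q_R),\partial_i Q_R\rangle$ respectively $\langle\CH_\nn^\ve(\Delta Q_R),\Delta Q_R\rangle$, and closes the remaining commutator and convective errors via the $H^1$ and $H^2$ bounds in Lemma \ref{nu:norm:QR,vR}. No new difficulty beyond algebraic bookkeeping arises at this stage.
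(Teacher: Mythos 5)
Your plan is correct and follows essentially the same route as the paper: the paper likewise splits $\PP=\dot{Q}_R+\vv_R\cdot\nabla Q^\ve$ into $W_0+S_0$, reuses the $m=0$ convective bounds with $\fE$ replaced by $\fE_m$, decomposes the commutator $[\partial_t+\tilde{\vv}\cdot\nabla,\CH_\nn]$ via $Q_R=Q_R^\top+Q_R^\perp$, substitutes the remainder equation \eqref{eq-QR} to generate $-\varepsilon^m\frac{\ud}{\ud t}\CA(Q_R,\PP)$, and then closes the inertial commuted term via the weighted bounds $\varepsilon^{m/2}\|\PP\|_{L^2}\lesssim\fE_m^{1/2}$, $\varepsilon^{m/2}\|\dot{Q}_R\|_{L^2}\lesssim\fE_m^{1/2}+\varepsilon\fE_m$ from Lemma \ref{nu:norm:QR,vR} — exactly the $\varepsilon^{m/2}\cdot\varepsilon^{m/2}$ distribution you describe. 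Your observation that \eqref{nu:CH,Q1} and \eqref{nu:CH,Q2} carry no $\CA$-correction because the $\varepsilon$ and $\varepsilon^3$ prefactors already desingularize is also exactly how the paper proceeds in \eqref{nu:Q,CH2}--\eqref{nu:Q,CH3}.
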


\begin{proof}
Applying the definition of $\PP$, we have
\begin{align*}
    -\Big\langle\frac{1}{\ve}\CH_{\nn}^\ve({Q}_R),\PP\Big\rangle=&~\Big\langle-\vv_R\cdot\nabla Q^\ve, \frac{1}{\ve}\CH_\nn^\ve(Q_R)\Big\rangle+\Big\langle-\dot{Q}_R, \frac{1}{\ve}\CH_\nn^\ve(Q_R)\Big\rangle\\
    \eqdefa&~W_0+S_0,\\
    -\ve\langle\partial_i\CH_{\nn}^\ve({Q}_R),\partial_i\PP\rangle=&~\ve\langle-\partial_i(\vv_R\cdot\nabla Q^\ve), \partial_i\CH_\nn^\ve(Q_R)\rangle+\ve\langle-\partial_i\dot{Q}_R, \partial_i\CH_\nn^\ve(Q_R)\rangle\\
    \eqdefa&~W_1+S_1,\\
    -\ve^3\langle\Delta\CH_{\nn}^\ve({Q}_R),\Delta\PP\rangle=&~\ve^3\langle-\Delta(\vv_R\cdot\nabla Q^\ve),\Delta\CH_\nn^\ve(Q_R)\rangle+\ve^3\langle-\Delta\dot{Q}_R,\Delta\CH_\nn^\ve(Q_R)\rangle\\
    \eqdefa&~W_2+S_2.
\end{align*}
When replacing $\fE$ by $\fE_m$ in (\ref{vQ1}), (\ref{vQ2}) and (\ref{vQ3}), respectively,
it follows that
\begin{align}
    &W_0\leq C\big(\fE_m+\fE_m^\frac{1}{2}\fF^{\frac{1}{2}}+\ve\fE_m\fF^{\frac{1}{2}}\big),\label{nu:SD0}\\
    &W_1\leq C\big(\fE_m+\ve\fE_m^{\frac{3}{2}}+\fE_m^\frac{1}{2}\fF^{\frac{1}{2}}+\ve\fE_m\fF^{\frac{1}{2}}\big),\label{nu:SD1}\\
    &W_2\leq C\big( \fE_m+\ve\fE_m^{\frac{3}{2}}+\fE_m^\frac{1}{2}\fF^{\frac{1}{2}}+\ve\fE_m\fF^{\frac{1}{2}}\big).\label{nu:SD2}
\end{align}
By means of the equation (\ref{eq-QR}) and the derivation of (\ref{Q,CH1}), the term $S_0$ can be controlled as
\begin{align}
  S_0 =&\nonumber-\ve^m\frac{\ud}{\ud t}\underbrace{{\langle\CH_\nn^{-1}(\mathbf{A}(Q_R^\top)),  J\PP\rangle}}_{\CA(Q_R,\PP)}+ \ve^m J\langle(\partial_t+{\vv}^\ve\cdot\nabla)\CH_\nn^{-1}(\mathbf{A}(Q_R^\top),\PP\rangle\\
    &\nonumber+\Big\langle\CH_\nn^{-1}(\mathbf{A}(Q_R^\top)),\mu_1{\UU}_0+\CL(Q_R)-\FF_R\Big\rangle-\langle\CL(Q_R),\tilde{\vv}\cdot\nabla Q_R\rangle\\
    &\nonumber-\frac{1}{\ve}\langle bs\dot{\overline{\nn\nn}}\cdot Q_R^\perp,Q_R^\perp\rangle-\frac{1}{2\ve}\frac{\ud}{\ud t}\langle\CH^{\ve}_{\nn}(Q_R),Q_R\rangle
    \\\leq&\nonumber-\frac{\ud}{\ud t}\Big(\ve^m\CA(Q_R,\PP)+\frac{1}{2\ve}\langle\CH^{\ve}_{\nn}(Q_R),Q_R\rangle\Big)+C{\ve}^{-1}\|Q_R^\perp\|_{L^2}^2\\
    &\nonumber+C  \ve^m\|\PP\|_{L^2}\big(\|Q_R\|_{L^2}+\|\dot{Q}_R\|_{L^2}+\ve^3\|\vv_R\|_{H^2}\|Q_R\|_{H^1}\big)\\&\nonumber+C\|Q_R\|_{H^1}\big(\|{\UU}_0\|_{L^2}+\| Q_R\|_{H^1}+\|\FF_R\|_{L^2}\big)\\
    \leq&\nonumber-\frac{\ud}{\ud t}\Big(\ve^m\CA(Q_R,\PP)+\frac{1}{2\ve}\langle\CH^{\ve}_{\nn}(Q_R),Q_R\rangle\Big)\\
    &+C(\fE_m+\ve{\fE}_{m}^{\frac{3}{2}}+\ve{\fE}_{m}^{\frac{1}{2}}\fF^{\frac{1}{2}}+{\fE}_{m}^{\frac{1}{2}}\|\FF_R\|_{L^2}),\label{nu:Q,CH1}
\end{align}
where $\mathbf{A}(Q)\eqdefa 2bs \dot{\overline{\nn\nn}} \cdot Q-2cs^2(\dot{\overline{\nn\nn}}: Q)(\mathbf{n n}-\frac{1}{3}\II)$ and $\dot{\overline{\nn\nn}}=(\partial_t+\tilde{\vv}\cdot\nabla)(\nn\nn)$, and we have used the fact that $C_0\ve^{-1}\|Q_R^\perp\|_{L^2}^2 \leq \ve^{-1}\langle\CH_{\nn}(Q_R),Q_R\rangle$ with $C_0=C_0(a,b,c)>0$.
Similar to the derivations of  (\ref{Q,CH2}) and (\ref{Q,CH3}), we have
\begin{align}
   S_1=&-\frac{\ve}{2}\frac{\ud}{\ud t}\Big\langle\CH_{\nn}^\ve(\partial_iQ_R),\partial_i{Q}_R\Big\rangle-\ve\Big\langle\CH_\nn^\ve(\partial_iQ_R),\partial_i\tilde{\vv}\cdot\nabla Q_R\Big\rangle\nonumber\\
   &-\ve^2\langle\CL(\partial_iQ),(\tilde{\vv}\cdot\nabla)\partial_iQ_R\rangle+\ve\Big\langle[\CH_\nn,\partial_i]Q_R,\partial_i\dot{Q}_R\Big\rangle\nonumber\\
   &+\frac{\ve}{2}\langle[\partial_t+\tilde{\vv}\cdot\nabla,\CH_\nn]\partial_iQ_R,\partial_i{Q}_R\rangle
\nonumber\\
\leq&-\frac{\ve}{2}\frac{\ud}{\ud t}\Big\langle\CH_{\nn}^\ve(\partial_iQ_R),\partial_i{Q}_R\Big\rangle+C({\fE}_{m}+\ve{\fE}_{m}^{\frac{3}{2}}+\fE_{m}^{\frac{1}{2}}\fF^{\frac{1}{2}}),\label{nu:Q,CH2}\end{align}
where $\tilde{\vv}=\vv_0+\ve\vv_1+\ve^2\vv_2$, and
\begin{align}
S_2
=&-\frac{\ve^3}{2}\frac{\ud}{\ud t}\Big\langle\CH_{\nn}^\ve(\Delta Q_R),\Delta {Q}_R\Big\rangle-\ve^3\Big\langle\CH_{\nn}(\Delta Q_R),[\Delta,\Tilde{\vv}\cdot\nabla ]Q_R\Big\rangle\nonumber\\
&+\ve^4\Big\langle\CL(\partial_i Q_R),\partial_i\big([\Delta,\Tilde{\vv}\cdot\nabla ]Q_R\big)\Big\rangle-\ve^4\langle\CL(\Delta Q_R),\tilde{\vv}\cdot\nabla\Delta Q_R\rangle\nonumber\\
&+\ve^3\Big\langle[\CH_\nn,\Delta]Q_R,\Delta\Dot{Q}_R\Big\rangle+\frac{\ve^3}{2}\langle[\partial_t+\Tilde{\vv}\cdot\nabla,\CH_\nn]\Delta Q_R,\Delta Q_R\rangle\nonumber\\
\leq&-\frac{\ve^3}{2}\frac{\ud}{\ud t}\Big\langle\CH_{\nn}^\ve(\Delta Q_R),\Delta{Q}_R\Big\rangle+C({\fE}_{m}+\ve^2{\fE}_{m}^{\frac{3}{2}}+\ve\fE^{\frac{1}{2}}_{m}\fF^{\frac{1}{2}}).\label{nu:Q,CH3}
\end{align}
Hence, combining (\ref{nu:SD0})--(\ref{nu:SD2}) with (\ref{nu:Q,CH1})--(\ref{nu:Q,CH3}), we complete the proof of the lemma.
\end{proof}

Next, we prove Proposition \ref{nu:proposition-PP}, which is split into three steps.

\begin{proof}[Proof of Proposition \ref{nu:proposition-PP}]
{\it Step 1. $L^2$-estimate.}
Multiplying (\ref{eq-QR}) by $\PP$ and (\ref{eq-vR}) by $\vv_R$, respectively, and integrating by parts over the space $\BR$, we then have
\begin{align*}
    \langle\vv_R,\partial_t\vv_R\rangle+\ve^m   J\langle\PP,\partial_t\PP\rangle
=&\sum_{k=1}^7J_k+\langle\nabla\cdot \GG_R+\GG'_R,\vv_R\rangle+\langle\FF_R,\PP\rangle\\
    \leq&-c_1\fF+\|(\GG_R,\FF_R)\|_{L^2}\fF^{\frac{1}{2}}+\|\GG'_R\|_{L^2}{\fE}_{m}^{\frac{1}{2}},
\end{align*}
where the terms $J_k(k=0,\cdots,7)$ have been defined in (\ref{L^2}).
Using $\nabla\cdot\vv^\ve=0$, there holds
\begin{align*}
    \frac{M}{2}\frac{\ud}{\ud t}\|Q_R\|_{L^2}^2=&M\langle Q_R,(\PP-\vv_R\cdot\nabla \widetilde{Q})\rangle-M\langle Q_R,\vv^\ve\cdot\nabla Q_R\rangle\\
    \leq& CM({\fE}_{m}+{\fE}_{m}^{\frac{1}{2}}\fF^{\frac{1}{2}}),
\end{align*}
where $\widetilde{Q}=\sum\limits^3_{k=0}\ve^kQ_k$.
Combining (\ref{nu:CH,Q0}) with the above estimates, we get
\begin{align}
    &\frac{\ud}{\ud t}\big({\fE}_{m,0}(t)+\ve^m\CA(Q_R,\PP)\big)+c_1\fF_0\nonumber\\
    &\quad\leq CM({\fE}+\ve{\fE}^{\frac{3}{2}}+{\fE}^{\frac{1}{2}}\fF^{\frac{1}{2}}+\ve{\fE}\fF^{\frac{1}{2}})\nonumber\\
    &\qquad+\|(\GG_R,\FF_R)\|_{L^2}\fF^{\frac{1}{2}}+\|\GG_R'\|_{L^2}{\fE}_{m}^{\frac{1}{2}}+C\|\FF_R\|_{L^2}{\fE}_{m}^{\frac{1}{2}}.\label{nu:L2-estimate}
\end{align}

{\it{Step 2. $H^1$-estimate}.}
We apply the derivative $\partial_i$ on (\ref{eq-QR}) and take the $L^2$-inner product with $\partial_i\PP$. Again by acting $\partial_i$ on (\ref{eq-vR}) and
taking the $L^2$-inner product with $\partial_i\vv_R$, we obtain
\begin{align}
{\ve^2}\langle\partial_i&\vv_R,\partial_t(\partial_i\vv_R)\rangle+\ve^{2+m}J\langle\partial_i\PP,\partial_t(\partial_i\PP)\rangle\nonumber\\
   = &~\sum_{i=1}^7K_i+\ve^{2+m}J\langle\partial_i\vv^\ve\cdot\nabla\PP,\partial_i\PP\rangle-\langle\ve^2\partial_i \GG_R,\nabla\partial_i\vv_R\rangle\nonumber\\
&+\langle\ve^2\partial_i\GG_R',\partial_i\vv_R\rangle+\ve^2\langle\partial_i\FF_R,\partial_i\PP\rangle\nonumber\\
    \leq &~\sum_{i=1}^7K_i+C\Big((1+\ve\fF^{\frac{1}{2}}){\fE}_{m}+\ve\|(\partial_i\GG_R,\partial_i\FF_R)\|_{L^2}\fF^{\frac{1}{2}}+\ve\|\partial_i\GG_R'\|_{L^2}{\fE}_{m}^{\frac{1}{2}}\Big),\label{nu:H^1}
\end{align}
where the terms $K_i(i=1,\cdots,7)$ have been defined in (\ref{H^1}).
Similar to the derivation of (\ref{T12})--(\ref{T1-6}), it follows that
\begin{align*}
\sum_{i=1}^6 K_i\leq & ~C\big(\|\ve\nabla\vv_R\|_{L^2}+\ve\|\PP-[\BOm_R,Q_0]\|_{L^2}+\|\ve[\BOm_R,\partial_iQ_0]\|_{L^2}\big)\|\ve\nabla\partial_i\vv_R\|_{L^2}\\&+C\|\ve\nabla\vv_R\|_{L^2}\|\ve\partial_i\PP\|_{L^2}-c_1\fF_1\\\leq&-c_1\fF_1+C(\ve\fF+{\fE}_{m}^{\frac{1}{2}}\fF^{\frac{1}{2}}),
\end{align*}
which together with (\ref{nu:CH,Q1}) and (\ref{nu:H^1}) yields
\begin{align}
    \frac{\ud}{\ud t}{\fE}_{m,1}(t)+c_1\fF_1(t)\leq &C\Big( {\fE}_{m}+ \ve{\fE}_{m}^{\frac{3}{2}}+\ve\fF+{\fE}_{m}^{\frac{1}{2}}\fF^{\frac{1}{2}}+\ve{\fE}_{m}\fF^{\frac{1}{2}}\nonumber\\
    &\quad+\ve\|(\partial_i\GG_R,\partial_i\FF_R)\|_{L^2}\fF^{\frac{1}{2}}+\|\ve\partial_i\GG_R'\|_{L^2}{\fE}_{m}^{\frac{1}{2}}\Big).\label{nu:H1-estimate}
\end{align}

{\it {Step 3. $H^2$-estimate.}}
Similarly, from the remainder system {\rm (\ref{eq-QR})--(\ref{eq-vR-div})} with $m\in\mathbb{Z}^+$, one can deduce that
\begin{align*}
    \frac{\ve^4}{2}\frac{\ud}{\ud t}\Big\{\|\Delta\vv_R\|_{L^2}^2+\ve^m   J\|\Delta\PP\|_{L^2}^2\Big\}
    \leq &\sum_{k=1}^7V_k-\ve^{4+m}J\langle\Delta(\vv^\ve\cdot\nabla\PP),\Delta \PP\rangle\\&+C\left(\ve^2\|(\Delta\GG_R,\Delta\FF_R)\|_{L^2}\fF^{\frac{1}{2}}+\ve^2\|\Delta\GG_R'\|_{L^2}{\fE}_{m}^{\frac{1}{2}}\right),
\end{align*}
where the terms $V_k(k=1,\cdots,7)$ have been given by (\ref{H^2}).
Due to $\nabla\cdot\vv^\ve=0$ with $\vv^\ve=(v^\ve_1,v^\ve_2,v^\ve_3)$, it follows that
\begin{align*}
    -\ve^{4+m}J&\langle\Delta(\vv^\ve\cdot\nabla\PP),\Delta \PP\rangle\\=&-\ve^{4+m}J\big\langle[\Delta,\vv^\ve\cdot\nabla]\PP,\Delta \PP\big\rangle\\
    =&-\ve^{4+m}J\langle\Delta\vv^\ve\cdot\nabla\PP,\Delta \PP\rangle-\ve^{4+m}J\langle(\partial_i\vv^\ve\cdot\nabla\partial_i\PP),\Delta \PP\rangle\\
    =&-\ve^{4+m}J\langle\partial_i(\Delta v^\ve_i\PP),\Delta \PP\rangle-\ve^{4+m}J\langle(\partial_i\vv^\ve\cdot\nabla\partial_i\PP),\Delta \PP\rangle\\
\leq&~C\ve^{4+m}\big(\|\Delta\vv^\ve\|_{H^1}\|\PP\|_{H^2}+\|\partial_i\vv^\ve\|_{H^2}\|\nabla^2\PP\|_{L^2}\big)\|\Delta\PP\|_{L^2}\\
\leq&~C({\fE}_{m}+\ve{\fE}_{m}\fF^{\frac{1}{2}}).
\end{align*}
Recalling the derivation of (\ref{MT1-6}), it can be seen that
\begin{align*}
    \sum_{k=1}^6 V_k\leq&- c_1\fF_2+C\ve^4\Big(\|\nabla\Delta\vv_R\|_{L^2}\big(\|\nabla\vv_R\|_{H^1}+\|\PP-[\BOm_R,Q_0]\|_{H^1}\\&+\|[\BOm_R,\partial_iQ_0]\|_{H^1}\big)+\|\nabla\vv_R\|_{H^1}\|\Delta\PP\|_{L^2}\Big)\\
    \leq&-c_1\fF_2+C(\ve\fF+{\fE}_{m}^{\frac{1}{2}}\fF^{\frac{1}{2}}).
\end{align*}
Using Lemma \ref{nu:A key lemma} and the above estimates, we have
\begin{align}
    \frac{\ud}{\ud t}{\fE}_{m,2}(t)+c_1\fF_2(t)\leq&~C\Big({\fE}_{m}+\ve{\fE}_{m}^{\frac{3}{2}}+\ve\fF+{\fE}_{m}^{\frac{1}{2}}\fF^{\frac{1}{2}}+\ve{\fE}_{m}\fF^{\frac{1}{2}}\nonumber\\&+\ve^2\|(\Delta\GG_R,\Delta\FF_R\|_{L^2}\fF^{\frac{1}{2}}
    +\ve^2{\fE}_{m}^{\frac{1}{2}}\|\Delta\GG_R'\|_{L^2}\Big).\label{nu:H2-estimate}
\end{align}
From (\ref{nu:L2-estimate}), (\ref{nu:H1-estimate}) and (\ref{nu:H2-estimate}), we infer that
\begin{align}\label{ve-m-ef-CA}
    &\frac{\ud}{\ud t}\big({\fE}_{m}(t)+\ve^m\CA(Q_R,\PP)\big)+{c_1}\fF(t)\nonumber\\
    &\quad\leq CM\Big({\fE}_{m}+\ve{\fE}_{m}^{\frac{3}{2}}+\ve\fF+{\fE}_{m}^{\frac{1}{2}}\fF^{\frac{1}{2}}+\ve{\fE}_{m}\fF^{\frac{1}{2}}\Big)
    +C{\fE}_{m}^{\frac{1}{2}}\|\FF_R\|_{L^2}\nonumber\\
    &\qquad+C\sum_{k=0,1,2}\ve^{2k}\Big(\fF^{\frac{1}{2}}\|(\partial_i^k\GG_R,\partial_i^k\FF_R)\|_{L^2}+{\fE}_{m}^{\frac{1}{2}}\|\partial_i^k\GG_R'\|_{L^2}\Big)\nonumber\\
    &\quad\leq CM\Big((1+{\fE}_{m}+\ve{\fE}_{m}^{\frac{3}{2}}+\ve^3{\fE}_{m}^2)+\fF^{\frac{1}{2}}(1+{\fE}^{\frac{1}{2}}+\ve{\fE}_{m}+\ve^3{\fE}_{m}^{\frac{3}{2}})\nonumber\\
    &\qquad+\fF(\ve+\ve{\fE}_{m}^{\frac{1}{2}}+\ve^4{\fE}_{m})\Big)\nonumber\\
    &\quad\leq CM^2\Big((1+{\fE}_{m}+\ve^2{\fE}_{m}^2+\ve^6{\fE}_{m}^3)+\fF(\ve+\ve{\fE}_{m}^{\frac{1}{2}}+\ve^4{\fE}_{m})\Big)+\frac{c_1}{2}\fF.
\end{align}
When the positive constant $M\geq 1$ is large enough, the term $\ve^m|\CA(Q_R,\PP)|$ can be controlled as
\begin{align*}
    \ve^m|\CA(Q_R,\PP)|\leq& \ve^m C(\nn,\tilde{\vv})   \|\PP\|_{L^2}\|Q_R\|_{L^2}\\
    \leq& \frac{1}{4}(\ve^m J\|\PP\|_{L^2}^2+C_2(\nn,\tilde{\vv})\|Q_R\|_{L^2}^2)\leq \frac{1}{2}{\fE}_{m,0}(t).
\end{align*}
Therefore, there exists a positive constant $C$ not depending on $(\ve, M)$, such that (\ref{ve-m-ef-CA}) becomes
  \[\begin{aligned}
       &{\fE}_{m}(t)-3\fE(0)+{c_1}\int_0^t\fF(\tau)\ud\tau\\&\leq CM^2\int_0^t\Big(\big(1+{\fE}_{m}(\tau)+\ve^2{\fE}_{m}^2(\tau)+\ve^6{\fE}_{m}^3(\tau)\big)+\fF(\tau)\big(\ve+\ve{\fE}_{m}^{\frac{1}{2}}(\tau)+\ve^4{\fE}_{m}(\tau)\big)\Big)\ud\tau,
   \end{aligned}\]
where $M\geq \max\{1,C_2(\nn,\tilde{\vv})\}$. Then the proposition follows.
\end{proof}

\section{Proof of Theorem \ref{main theorem}}

This section concentrates on the proof Theorem \ref{main theorem}. Although Theorem \ref{main theorem} implies two different results, it can be proved in a unified way. To control the singular terms of the remainder system and close the energy estimates, two functionals have been introduced. Before proving the Theorem \ref{main theorem}, we need to show that ${\fE}_{m}(t)$ defined by $(\ref{energyE})$ and $\widetilde{{\fE}}_{m}(t)$ defined by $(\ref{definition of widetile(fE)})$ can be controlled by each other, where $m$ is a nonnegative integer. Specifically, we have the following lemma.

\begin{lemma}\label{fE-tilde(fE)}
     There exists constants $C'>0$ and $C''>0$ such that
    \begin{align*}
        {\fE}_{m}(t)\leq C'(\widetilde{{\fE}}_{m}(t)+\ve^2\widetilde{{\fE}}_{m}^2(t)),~~\widetilde{{\fE}}_{m}(t)\leq C''({\fE}_{m}(t)+\ve^2{\fE}_{m}^2(t)).
    \end{align*}
\end{lemma}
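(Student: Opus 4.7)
The two functionals are structurally parallel: both share the spatial pieces $\int|\vv_R|^2$, $\ve^{2k}\int|\nabla^k\vv_R|^2$ ($k=1,2$), and $\ve^{-1}\langle\CH_\nn^\ve(\nabla^k Q_R),\nabla^k Q_R\rangle$ ($k=0,1,2$), and they differ only in the kinetic pieces (which in $\fE_m$ involve $\PP$ and its derivatives, while in $\widetilde{\fE}_m$ they involve $\partial_tQ_R$ and its derivatives), together with the harmless constant $M\ge 1$ in front of $\|Q_R\|_{L^2}^2$. Accordingly, the plan is to bound the $\PP$-pieces in terms of the $\partial_tQ_R$-pieces (and vice versa), absorbing $M$ into $C'$ and $C''$; all spatial contributions cancel out of the comparison.

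By definition $\PP=(\partial_t+\tilde{\vv}\cdot\nabla)Q_R+\vv_R\cdot\nabla(\widetilde{Q}+\ve^3 Q_R)$, so
\begin{equation*}
\PP-\partial_tQ_R \;=\; \tilde{\vv}\cdot\nabla Q_R + \vv_R\cdot\nabla\widetilde{Q} + \ve^3\vv_R\cdot\nabla Q_R.
\end{equation*}
Propositions~\ref{prop Q(1)} and~\ref{nu:prop Q(1)} give $\|\tilde{\vv}\|_{L^\infty}+\|\nabla\widetilde{Q}\|_{L^\infty}\leq C$ uniformly in $\ve$, and \eqref{L:postive} yields $\|\nabla Q_R\|_{L^2}^2 \leq C\widetilde{\fE}_m$ through $\langle\CL(Q_R),Q_R\rangle\geq 2L_0\|\nabla Q_R\|_{L^2}^2$; hence the two linear terms on the right are controlled in $L^2$ by $C\widetilde{\fE}_m^{1/2}$. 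For the nonlinear cross-term, the three-dimensional Sobolev embedding $H^2(\BR)\hookrightarrow L^\infty(\BR)$ together with $\ve^4\|\vv_R\|_{H^2}^2\leq C\widetilde{\fE}_m$ gives
\begin{equation*}
\ve^6\|\vv_R\cdot\nabla Q_R\|_{L^2}^2 \;\leq\; C\ve^6\|\vv_R\|_{H^2}^2\|\nabla Q_R\|_{L^2}^2 \;\leq\; C\ve^2\,\widetilde{\fE}_m^2.
\end{equation*}
Multiplying by $\ve^mJ$ delivers $\ve^mJ\|\PP\|_{L^2}^2\leq C(\widetilde{\fE}_m+\ve^2\widetilde{\fE}_m^2)$.

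The same scheme, applied with the product rule to $\partial_i(\PP-\partial_tQ_R)$ and $\Delta(\PP-\partial_tQ_R)$, handles the higher-order kinetic pieces $\ve^{m+2}J\int|\partial_i\PP|^2$ and $\ve^{m+4}J\int|\Delta\PP|^2$. Each linear term---a derivative of the smooth data $\tilde{\vv}$ or $\widetilde{Q}$ paired with a spatial derivative of $Q_R$ or $\vv_R$---is dominated by a single power of $\widetilde{\fE}_m$ via the corresponding $\ve^{2k}$-weighted spatial norm. Each nonlinear term of the shape $\ve^3\partial^\alpha\vv_R\cdot\nabla\partial^\beta Q_R$ with $|\alpha|+|\beta|\leq 2$ is estimated by placing one factor in $L^\infty$ through the $H^2$-embedding (at a cost of $\ve^{-4}$) and the other in $L^2$; the total $\ve$-weight becomes $\ve^{m+6+2k-4}\ge\ve^2$, exactly matching the quadratic remainder. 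Summing for $k=0,1,2$ and adding the trivially comparable spatial pieces produces $\fE_m\leq C'(\widetilde{\fE}_m+\ve^2\widetilde{\fE}_m^2)$. The reverse inequality is identical after rewriting $\partial_tQ_R=\PP-\tilde{\vv}\cdot\nabla Q_R-\vv_R\cdot\nabla\widetilde{Q}-\ve^3\vv_R\cdot\nabla Q_R$ and swapping $\widetilde{\fE}_m$ for $\fE_m$ on the right-hand side throughout.

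The main challenge is bookkeeping rather than analysis. One must verify that after each differentiation and each nonlinear pairing the $\ve$-powers balance so that the $H^2$-Sobolev embedding never costs more than the two powers of $\ve$ available, leaving the intended quadratic remainder $\ve^2\widetilde{\fE}_m^2$ rather than a divergent $\ve^{-k}\widetilde{\fE}_m^2$. The $\ve^{2k}$-weighting built into both energies is tailored precisely so that this accounting closes uniformly at every level $k=0,1,2$.
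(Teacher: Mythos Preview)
Your proposal is correct and follows essentially the same approach as the paper: both proofs observe that $\fE_m$ and $\widetilde{\fE}_m$ differ only in the kinetic pieces ($\PP$ versus $\partial_tQ_R$) and in the constant in front of $|Q_R|^2$, then exploit the algebraic identity $\PP-\partial_tQ_R=\tilde{\vv}\cdot\nabla Q_R+\vv_R\cdot\nabla\widetilde{Q}+\ve^3\vv_R\cdot\nabla Q_R$ together with the product estimate \eqref{fg Hk} (equivalently, the $H^2\hookrightarrow L^\infty$ embedding you invoke) to bound each $\ve^{k+m/2}\|\partial_i^k(\PP-\partial_tQ_R)\|_{L^2}$ by $C(\widetilde{\fE}_m^{1/2}+\ve\widetilde{\fE}_m)$, and symmetrically in the other direction. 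Your write-up is somewhat more explicit about the $\ve$-bookkeeping than the paper's, but the substance is identical.
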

\begin{proof}
It suffices to estimate the different terms related to the inertial constant $J$ in the functionals ${\fE}_{m}(t)$ and $\widetilde{{\fE}}_{m}(t)$. Using the definition of $\PP$, it follows that
\begin{align*}
    \sum_{k=0}^2\ve^{k+\frac{m}{2}}\|\partial_i^k\PP\|_{L^2} =& \sum_{k=0}^2\ve^{k+\frac{m}{2}}\|\partial_i^k(\partial_t Q_R+\tilde{\vv}\cdot\nabla Q_R+\vv_R\cdot\nabla Q^\ve)\|_{L^2}\\\leq& ~C \sum_{m=0}^2\ve^{k+\frac{m}{2}}\Big(\|\partial_tQ_R\|_{H^k}+\|\nabla Q_R\|_{H^k}+\|\vv_R\|_{H^k}\|\nabla Q^\ve\|_{H^2}\Big)\\
    \leq &C(\widetilde{{\fE}}_{m}^{\frac{1}{2}}+\ve\widetilde{{\fE}}_{m}),\\
     \sum_{k=0}^2\ve^{k+\frac{m}{2}}\|\partial_i^k\partial_t Q_R\|_{L^2} =& \sum_{k=0}^2\ve^{k+\frac{m}{2}}\|\partial_i^k(\PP-\tilde{\vv}\cdot\nabla Q_R-\vv_R\cdot\nabla Q^\ve)\|_{L^2}
     \\\leq&~ C({\fE}_{m}^{\frac{1}{2}}+\ve{\fE}_{m}),
\end{align*}
which together with the mean inequality leads to the proof of the lemma.
\end{proof}

It remains to complete the proof of Theorem \ref{main theorem}. We will show that ${\fE}_{m}(t)$ is uniformly bounded in $[0,T]$, and thus $\widetilde{{\fE}}_{m}(t)$ is uniformly bounded in $[0,T]$.

\begin{proof}[Proof of Theorem \ref{main theorem}]

Recalling the initial condition (\ref{initial condition}) and Lemma \ref{fE-tilde(fE)}, we get
\[{\fE}_{m}(0)\leq C'\big(\widetilde{{\fE}}_{m}(0)+\ve^2
\widetilde{{\fE}}_{m}^2(0)\big)\leq C'C(\nn)(E_0^2+E_0^4)\eqdefa \widetilde{E}_{m},\]
where $C(\nn)$ only depends on $\nn$, and $m$ is a nonnegative integer. It can be showed by the energy method in \cite{DZ} that there exists a maximal time $T_\ve>0$  dependent of $(\ve,m)$ and a unique solution $(\vv^\ve,Q^\ve)$ of the system (\ref{eq:Q})--(\ref{eq:free div}) such that \[(\nabla Q^\ve,\partial_tQ^\ve)\in L^\infty([0,T_\ve);H^2)\cap L^2(0,T_\ve;H^2),\quad\vv^\ve\in L^\infty([0,T_\ve);H^2)\cap L^2(0,T_\ve;H^3).\]
Now we prove that $T\leq T_{\ve}$. Suppose it is not true. From Proposition \ref{proposition-PP} and Proposition \ref{nu:proposition-PP}, we infer that
\[\begin{aligned}
   &{\fE}_{m}(t)-3\fE_m(0)+c_1\int^t_0\fF(\tau)\ud\tau\\
   &\quad\leq C\int_0^t\Big(\big(1+{\fE}_{m}(\tau)+\ve^2{\fE}_{m}^2(\tau)+\ve^6{\fE}_{m}^3(\tau)\big)+\fF(\tau)\big(\ve+\ve{\fE}_{m}^{\frac{1}{2}}(\tau)+\ve^4{\fE}_{m}(\tau)\big)\Big)\ud\tau,
\end{aligned}\]
for any $t\in [0,T_\ve]$ and $m\in \mathbb{N}$ .
Let $\widehat{E}=(1+3\widetilde{E}_m)(1+2CT e^{2CT})>{{\fE}_m}(0)$, and
\[T_1=\sup\{t\in[0,T_\ve]:{\fE}_m(t)\leq \widehat{E}\}.\]
If we take $\ve_0$ small enough such that
\[\ve_0^2\widehat{E}\leq 1,~~C(\ve_0+\ve_0\widehat{E}^\frac{1}{2}+\ve_0^4\widehat{E}^4)\leq \frac{c_1}{2},\]
then for $t\leq T_1$, it holds that
\[{\fE}_{m}(t)\leq 2C\int_0^t(1+{\fE}_{m}(\tau))\ud\tau+3\fE_m(0).\]
If $T_\ve<T$, then the Gronwall inequality yields that for $t \leq T_1$,
\[{\fE}_{m}(t)\leq (1+3\widetilde{E})(1+2CT e^{2CT})-1<\widehat{E},\]
which implies $T_1=T_\ve$ and ${\fE}_{m}(T_\ve)<+\infty
($\text{i.e.}$,\widetilde{{\fE}}_{m}(T_\ve)<+\infty)$. From the definition of $\fE_m$, we obtain $(\ve^3\vv_R,\ve^3Q_R)\in H^2\times H^3$ in time $T_\ve$. Using Proposition \ref{prop Q(1)} for $m=0$ (or Proposition \ref{nu:prop Q(1)} for $m\in\mathbb{Z}^+$), we get $(\vv^\ve(T_\ve,\cdot),Q^\ve(T_\ve,\cdot))\in H^2\times H^3$, which contradicts our assumption. Thus $T\leq T_\ve$, and ${\fE}_{m}(t)\leq \widehat{E}$ for $t\in[0,T]$. Using Lemma \ref{fE-tilde(fE)} again, it holds that
\[
\widetilde{{\fE}}_{m}(t)\leq C''({\fE}_{m}(t)+\ve^2{\fE}_{m}^2(t))\leq C''(\widehat{E}+\widehat{E}^2)\eqdefa E_1,\quad t\in[0,T].
\]
Therefore, the proof of Theorem \ref{main theorem} is completed.
\end{proof}

\appendix
\section{Appendix}\label{CH,vv*Q}

\subsection{A useful lemma}
\begin{lemma}\label{v*Q,L(Q)}
    If $(\vv,Q)\in H^3\times H^2$ and $\nabla\cdot\vv=0$, then for any  $Q\in\mathbb{S}^3_0$, it follows that
    \[-\langle\vv\cdot\nabla Q,\CL(Q)\rangle \leq C\|\nabla\vv\|_{H^2}\|Q\|_{H^1}^2,\]
where $\CL(Q)$ is defined by {\em(\ref{CL-LP})}.
\end{lemma}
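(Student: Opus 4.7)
The plan is to view $\CL$ as the Euler--Lagrange operator of the quadratic elastic energy density $f_e(\nabla Q)$, and then exploit the symmetry of the associated bilinear form together with $\nabla\cdot\vv=0$ to kill the seemingly dangerous second-derivative contribution. Writing $f_e(\nabla Q)=\tfrac12\BA_{ijk,lmn}Q_{ij,k}Q_{lm,n}$, where the constant fourth-order tensor $\BA$ is symmetrized so that $\BA_{ijk,lmn}=\BA_{lmn,ijk}$, a single integration by parts gives
\begin{align*}
\langle R,\CL(Q)\rangle=\int_{\BR}R_{ij,k}\,\BA_{ijk,lmn}\,Q_{lm,n}\,\ud\xx
\end{align*}
for every $R\in H^1(\BR;\mathbb{S}^3_0)$; the trace-removing piece $-\tfrac23\delta_{ij}Q_{kl,kl}$ in $\CL$ plays no role, since $R$ is traceless.

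Next I would set $R=\vv\cdot\nabla Q$, which still lies in $\mathbb{S}^3_0$, and expand $R_{ij,k}=(\partial_k v_p)Q_{ij,p}+v_p Q_{ij,pk}$ to split $\langle\vv\cdot\nabla Q,\CL(Q)\rangle=I_1+I_2$. The harmless first piece
\begin{align*}
I_1=\int_{\BR}(\partial_k v_p)\,Q_{ij,p}\,\BA_{ijk,lmn}\,Q_{lm,n}\,\ud\xx
\end{align*}
is bounded directly by H\"older's inequality and the Sobolev embedding $H^2(\BR)\hookrightarrow L^\infty(\BR)$, giving $|I_1|\le C\|\nabla\vv\|_{L^\infty}\|\nabla Q\|_{L^2}^2\le C\|\nabla\vv\|_{H^2}\|Q\|_{H^1}^2$. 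The crux is the second piece, where the symmetry of $\BA$ yields the identity
\begin{align*}
v_p Q_{ij,pk}\,\BA_{ijk,lmn}\,Q_{lm,n}=\tfrac12 v_p\partial_p\bigl(\BA_{ijk,lmn}Q_{ij,k}Q_{lm,n}\bigr)=v_p\partial_p f_e(\nabla Q),
\end{align*}
so one more integration by parts combined with $\nabla\cdot\vv=0$ collapses it to $I_2=-\int_{\BR}(\partial_p v_p)\,f_e(\nabla Q)\,\ud\xx=0$. Adding the two bounds finishes the argument.

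The regularity $(\vv,Q)\in H^3\times H^2$ is exactly what is needed to justify each integration by parts: $\vv$ and $\nabla\vv$ lie in $L^\infty$ while $\nabla Q,\nabla^2 Q$ lie in $L^2$, so $R=\vv\cdot\nabla Q$ belongs to $H^1$ and all terms are finite. There is essentially no obstacle; the only genuine idea is the symmetry cancellation that eliminates $I_2$, and everything else is mechanical.
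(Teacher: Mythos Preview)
Your proposal is correct and follows essentially the same route as the paper: integrate by parts once, use the symmetry of the quadratic elastic form to recognize the second-derivative terms as $v_p\partial_p f_e(\nabla Q)$, and kill this via $\nabla\cdot\vv=0$, leaving only terms of the type $(\nabla\vv)(\nabla Q)(\nabla Q)$ bounded by $\|\nabla\vv\|_{L^\infty}\|\nabla Q\|_{L^2}^2$. The paper carries out the identical computation explicitly in the $L_1,L_2,L_3$ index notation rather than through your abstract tensor $\BA$, but the mechanism and the key cancellation are the same.
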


\begin{proof}
Applying $\nabla\cdot\vv=0$ and integrating by parts, we deduce that
\begin{align*}
   - &\langle\CL(Q),\vv\cdot\nabla Q\rangle\\
    &=\int_{\BR}\vv_jQ_{kl,j}\Big(L_1\Delta Q_{kl}+\frac{1}{2}(L_2+L_3)\Big(Q_{km,ml}+Q_{lm,mk}-\frac{2}{3}\delta_{kl}Q_{ip,ip}\Big)\Big)\ud\xx
    \\
&=\int_{\BR}-\Big(L_1\vv_jQ_{kl,mj}Q_{kl,m}+\frac{1}{2}(L_2+L_3)\vv_j(Q_{kl,lj}Q_{km,m}+Q_{kl,kj}Q_{lm,m})\\
    &\qquad+L_1\vv_{j,m}Q_{kl,j}Q_{kl,m}+\frac{1}{2}(L_2+L_3)(\vv_{j,l}Q_{kl,j}Q_{km,m}+\vv_{j,k}Q_{kl,j}Q_{lm,m})\Big)\ud\xx\\
&=\int_{\BR}-\Big(L_1\vv_{j,m}Q_{kl,j}Q_{kl,m}+\frac{1}{2}(L_2+L_3)(\vv_{j,l}Q_{kl,j}Q_{km,m}+\vv_{j,k}Q_{kl,j}Q_{lm,m}\Big)\ud\xx\\
&\leq C\|\nabla \vv\|_{H^2}\|Q\|_{H^1}^2.
\end{align*}
This completes the proof of the lemma.
\end{proof}

\subsection{The proofs of (\ref{CH,Q1}) and (\ref{CH,Q2}) in Lemma \ref{A key lemma}}

For notational simplicity, we denote $\fE_0$ by $\fE$, and introduce the following symbols:
\[\begin{aligned}
    &\dot{\overline{\partial_iQ_R}}=(\partial_t+\tilde{\vv}\cdot\nabla)\partial_i Q_R,\quad\dot{\overline{\Delta Q_R}}=(\partial_t+\tilde{\vv}\cdot\nabla)\Delta Q_R.
\end{aligned}\]

We now prove (\ref{CH,Q1}) and (\ref{CH,Q2}) of Lemma \ref{A key lemma}, respectively.

\begin{proof}[Proof of {\em(\ref{CH,Q1})}]
Using $\PP=\dot{Q}_R+\vv_R\cdot\nabla Q^\ve$, we have
\[ \begin{aligned}
    -\ve\langle\partial_i\CH_{\nn}^{\ve}(Q_R),\partial_i\PP\rangle&=-\ve\langle\partial_i\CH_{\nn}^{\ve}(Q_R),\partial_i(\vv_R\cdot\nabla Q^\ve)\rangle- \ve\langle\partial_i\CH_{\nn}^{\ve}(Q_R),\partial_i\dot{Q}_R\rangle\\
    &\eqdefa W_1+S_1.
\end{aligned}\]
For the term $W_1$, it can be calculated as
\begin{align*}
   W_1
    =&-\ve\langle\partial_i\vv_R\cdot\nabla Q^\ve,\CH_\nn(\partial_iQ_R)-[\CH_\nn,\partial_i]Q_R+\ve\CL(\partial_iQ_R)\rangle\\
   &-\ve\langle\vv_R\cdot\nabla \partial_iQ^\ve,\CH_\nn(\partial_iQ_R)-[\CH_\nn,\partial_i]Q_R+\ve\CL(\partial_iQ_R)\rangle\\
   =&-\ve\langle\partial_i\vv_R\cdot\nabla Q^\ve,\CH_\nn(\partial_iQ_R)\rangle-\ve\langle\partial_i\vv_R\cdot\nabla Q^\ve,[\partial_i,\CH_\nn]Q_R\rangle\\
   & -\ve\langle\partial_i\vv_R\cdot\nabla Q^\ve,\ve\CL(\partial_iQ_R)\rangle-\ve\langle\vv_R\cdot\nabla \partial_iQ^\ve,\CH_\nn(\partial_iQ_R)\rangle\\
   &-\ve\langle\vv_R\cdot\nabla \partial_iQ^\ve,[\partial_i,\CH_\nn]Q_R\rangle -\ve\langle\vv_R\cdot\nabla \partial_iQ^\ve,\ve\CL(\partial_iQ_R)\rangle\\
   \eqdefa&\sum^6_{k=1}\mathfrak{y}_k.
\end{align*}
Armed with $\partial_i\vv_R\cdot \nabla Q_0\in\text{Ker}\CH_\nn$ and lemma \ref{v*Q,L(Q)}, we derive that
\begin{align*}
    \mathfrak{y}_1&=-\ve^2\langle\partial_i\vv_R\cdot\nabla \widehat{Q}^\ve,\CH_\nn(\partial_iQ_R)\rangle-\ve^4\langle\partial_i\vv_R\cdot\nabla Q_R,\CH_\nn(\partial_iQ_R)\rangle\\
    &\leq C\ve^2\|\partial_i\vv_R\|_{L^2}\|\partial_i Q_R\|_{L^2}\big(1+\ve^2\|\nabla Q_R\|_{H^2}\big)\leq C(\ve{\fE}+\ve{\fE}^{\frac{3}{2}}),
\end{align*}
and
\begin{align*}
    \mathfrak{y}_6&=-\ve^2\langle\vv_R\cdot\nabla \partial_i\widehat{Q}^\ve,\CL(\partial_iQ_R)\rangle-\ve^5\langle\vv_R\cdot\nabla \partial_iQ_R,\CL(\partial_iQ_R)\rangle\\
    &\leq C\big(\ve^2\|\vv_R\|_{L^2}\|\partial_iQ_R\|_{H^2}+\ve^5\|\nabla\vv\|_{H^2}\|\partial_iQ_R\|_{H^1}^2\big)\leq C(\fE+\ve\fE\fF^{\frac{1}{2}}),
\end{align*}
where $\widehat{Q}^{\ve}=Q_1+\ve Q_2+\ve^2Q_3$. Using $Q^\ve=\widetilde{Q}+\ve^3Q_R$ and (\ref{fg Hk}), the terms $\mathfrak{y}_i(i=2,3,4,5)$ are estimated as, respectively,
\begin{align*}
\mathfrak{y}_2\leq&~C\|\ve\partial_i\vv_R\|_{L^2}\|\nabla Q^\ve\|_{H^2}\|Q_R\|_{L^2}
    \leq C({\fE}+\ve{\fE}^{\frac{3}{2}}),\\
    \mathfrak{y}_3 \leq&~ C\|\partial_i\vv_R\|_{L^2}\|\nabla Q^\ve\|_{H^2}\|\ve^2\nabla^3Q_R\|_{L^2}
    \leq C({\fE}^{\frac{1}{2}}+\ve{\fE})\fF^{\frac{1}{2}},\\
    \mathfrak{y}_4 \leq&~C\ve\|\vv_R\|_{L^2}\|\partial _iQ_R\|_{L^2}+C\ve\|\ve^2\vv_R\|_{H^2}\|\ve\nabla\partial_iQ_R\|_{L^2}\|\partial_i Q_R\|_{L^2}
    \\\leq&~ C(\ve{\fE}+\ve{\fE}^{\frac{3}{2}}),\\
    \mathfrak{y}_5\leq&~ C\ve\|\vv_R\|_{L^2}\|Q_R\|_{L^2}+C\ve^4\|\vv_R\|_{H^2}\|\nabla\partial_iQ_R\|_{L^2}\|Q_R\|_{L^2}
    \\\leq&~ C(\ve{\fE}+\ve{\fE}^{\frac{3}{2}}).
    \end{align*}
Summarizing the estimates yields
\begin{equation}\label{vQ2}
    W_1\leq C({\fE}+\ve{\fE}^{\frac{3}{2}}+{\fE}^{\frac{1}{2}}\fF^{\frac{1}{2}}+\ve{\fE}\fF^{\frac{1}{2}}).
\end{equation}
Using Lemma \ref{v*Q,L(Q)} again, we infer that
\begin{align}
  S_1=&-\ve\Big\langle\CH_{\nn}^\ve(\partial_iQ_R),\partial_i\Dot{Q}_R\Big\rangle+\ve\Big\langle[\CH_\nn,\partial_i]Q_R,\partial_i\Dot{Q}_R\Big\rangle\nonumber\\
=&-\ve\Big\langle\CH_{\nn}^\ve(\partial_iQ_R),\Dot{\overline{\partial_i{Q}_R}}+\partial_i\Tilde{\vv}\cdot\nabla Q_R\Big\rangle+\ve\Big\langle[\CH_\nn,\partial_i]Q_R,\partial_i\Dot{Q}_R\Big\rangle\nonumber\\
=&-\ve\Big\langle\CH_\nn^\ve(\partial_i Q_R),\partial_i\tilde{\vv}\cdot\nabla Q_R\Big\rangle+\ve\Big\langle[\CH_\nn,\partial_i]Q_R,\dot{Q}_R\Big\rangle\nonumber\\&-\ve^2\langle\CL(\partial_iQ_R),(\tilde{\vv}\cdot\nabla)\partial_iQ_R\rangle+\frac{\ve}{2}\langle[\partial_t+\Tilde{\vv}\cdot\nabla,\CH_\nn]\partial_iQ_R,\partial_iQ_R\rangle\nonumber\\&-\frac{\ve}{2}\frac{\ud}{\ud t}\Big\langle\CH_{\nn}^\ve(\partial_iQ_R),\partial_i{Q}_R\Big\rangle\nonumber\\
\leq&~C\Big(\|\nabla Q_R\|_{L^2}\|\ve\CH_\nn^\ve(\partial_i Q_R)\|_{L^2}+\|Q_R\|_{L^2}\|\ve\partial_i\dot{Q}_R\|_{L^2}\nonumber\\&+\ve^2\|\partial_iQ_R\|^2_{H^1}+\ve\|\partial_iQ_R\|_{L^2}^2\Big)-\frac{\ve}{2}\frac{\ud}{\ud t}\Big\langle\CH_{\nn}^\ve(\partial_iQ_R),\partial_i{Q}_R\Big\rangle\nonumber\\
\leq&-\frac{\ve}{2}\frac{\ud}{\ud t}\Big\langle\CH_{\nn}^\ve(\partial_iQ_R),\partial_i{Q}_R\Big\rangle+C({\fE}+\ve{\fE}^{\frac{3}{2}}).
\label{Q,CH2}
\end{align}
Combining (\ref{vQ2}) with (\ref{Q,CH2}), we deduce that
\begin{align*}
    -\ve\langle\partial_i\CH_{\nn}^{\ve}(Q_R),\partial_i\PP\rangle
    \leq-\frac{\ve}{2}\frac{\ud}{\ud t}\Big\langle\CH_{\nn}^\ve(\partial_iQ_R),\partial_i{Q}_R\Big\rangle+C({\fE}+\ve{\fE}^{\frac{3}{2}}+{\fE}^{\frac{1}{2}}\fF^{\frac{1}{2}}+\ve{\fE}\fF^{\frac{1}{2}}).
\end{align*}
Thus, we obtain the desired estimate.
\end{proof}

\begin{proof}[Proof of {\em(\ref{CH,Q2})}]
Similarly, we have
\[\begin{aligned}
    -\ve^3\langle\Delta\CH_{\nn}^{\ve}(Q_R),\Delta\PP\rangle&=-\ve^3\langle\Delta\CH_{\nn}^{\ve}(Q_R),\Delta(\vv_R\cdot\nabla Q^\ve)\rangle-\ve^3\langle\Delta\CH_{\nn}^{\ve}(Q_R),\Delta\dot{Q}_R\rangle\\&\eqdefa W_2+S_2.
\end{aligned}\]
By a straightforward computation, the term $W_2$ can be expressed by
\begin{align*}
    W_2=&-\ve^3\langle\Delta(\vv_R\cdot\nabla Q^\ve),\Delta\CH_\nn(Q_R)+\ve\CL(\Delta Q_R)\rangle\\
    = & -\ve^3\langle\vv_R\cdot\nabla\Delta Q^\ve,\ve\CL(\Delta Q_R)\rangle-\ve^3\langle\Delta\vv_R\cdot\nabla Q^\ve,\CH_\nn(\Delta Q_R)\rangle\\
    &-\ve^3\langle\Delta\vv_R\cdot\nabla Q^\ve,[\Delta,\CH_\nn] Q_R\rangle-\ve^3\langle\Delta\vv_R\cdot\nabla Q^\ve,\ve\CL(\Delta
Q_R)\rangle\\
&-\ve^3\langle\partial_i\vv_R\cdot\nabla\partial_i Q^\ve,\Delta\CH_\nn (Q_R)\rangle
-\ve^3\langle\vv_R\cdot\nabla\Delta Q^\ve,\Delta\CH_\nn(Q_R)\rangle\\
&-\ve^3\langle\partial_i\vv_R\cdot\nabla\partial_i Q^\ve,\ve\CL(\Delta Q_R)\rangle\\
=&\sum^7_{k=1}\mathfrak{m}_k.
\end{align*}
From integration by parts and Lemma \ref{v*Q,L(Q)}, we infer that
\begin{align*}
    \mathfrak{m}_1&=-\ve^4\langle\vv_R\cdot\nabla\Delta \widetilde{Q},\ve\CL(\Delta Q_R)\rangle-\ve^7\langle\vv_R\cdot\nabla\Delta Q_R,\CL(\Delta Q_R)\rangle\\
    &=\ve^4\langle\partial_i(\vv_R\cdot\nabla\Delta \widetilde{Q}),\ve\CL(\partial_i Q_R)\rangle-\ve^7\langle\vv_R\cdot\nabla\Delta Q_R,\CL(\Delta Q_R)\rangle\\&\leq C(\ve^4\|\vv_R\|_{H^1}\|\nabla^3 Q_R\|_{L^2}+\ve^7\|\nabla\vv_R\|_{H^2}\|\Delta Q_R\|_{H^1}^2)\\&\leq C(\ve{\fE}+\ve{\fE}\fF^{\frac{1}{2}}).
\end{align*}
According to $\Delta \vv_R \cdot\nabla Q_0\in {\rm Ker}\CH$, it follows that
\begin{align*}
    \mathfrak{m}_2&=-\ve^4\langle\Delta\vv_R\cdot\nabla \widehat{Q}^\ve,\CH_\nn(\Delta Q_R)\rangle-\ve^6\langle\Delta\vv_R\cdot\nabla Q_R,\CH_\nn(\Delta Q_R)\rangle\\&\leq C\ve^4\|\Delta \vv_R\|_{L^2}\|\Delta Q_R\|_{L^2}(1+\ve^2\|\nabla Q_R\|_{H^2})\leq C(\ve{\fE}+\ve{\fE}^{\frac{3}{2}}).
\end{align*}
As for the estimates of the terms $\mathfrak{m}_i(i=3,4,\dots,6)$, it is easy to obtain
\begin{align*}
\mathfrak{m}_3\leq&~C\|\ve^2\Delta \vv_R\|_{L^2}\|\nabla Q^\ve\|_{H^2}\|\ve Q_R\|_{H^1}
     \leq C(\ve{\fE}+\ve^2{\fE}^{\frac{3}{2}}),\\
\mathfrak{m}_4\leq&~C\|\ve^2\Delta\vv_R\|_{H^1}\|\nabla Q^\ve\|_{H^2}\|\ve^2\nabla\Delta Q_R\|_{L^2}
\leq C\fF^{\frac{1}{2}}{\fE}^{\frac{1}{2}}(1+\ve{\fE}^{\frac{1}{2}}),\\
\mathfrak{m}_5\leq&~C\|\ve^2\partial_i\vv_R\|_{H^2}\|\nabla\partial_i Q^\ve\|_{L^2}\|\ve Q_R\|_{H^2}
     \leq C({\fE}^{\frac{1}{2}}+\ve^2{\fE})\fF^{\frac{1}{2}},\\
    \mathfrak{m}_6\leq&~C \|\ve^2\vv_R\|_{H^2}\|\nabla\Delta Q^\ve\|_{L^2}\|\ve Q_R\|_{H^2}\leq C({\fE}+\ve{\fE}^{\frac{3}{2}}).
\end{align*}
The term $\mathfrak{m}_7$ can be estimated as
\begin{align*}
    \mathfrak{m}_7&=\ve^4\langle\partial_j(\partial_i \vv_R\cdot\nabla\partial_i \widetilde{Q}),\CL(\partial_jQ_R)\rangle+\ve^7\langle\partial_j(\partial_i \vv_R\cdot\nabla\partial_i Q_R),\CL(\partial_j Q_R)\rangle\\
    &\leq C\ve^4\|\nabla^2\partial_jQ_R\|_{L^2}\big(\|\partial_i\vv_R\|_{H^1}+\ve^3\|\partial_i\vv_R\|_{H^2}\|\nabla\partial_i Q_R\|_{H^1}\big)\\&\leq C(\fE+\ve\fE\fF^{\frac{1}{2}}).
\end{align*}
Consequently, combining the above estimates we obtain
\begin{equation}\label{vQ3}
    W_2\leq C({\fE}+\ve{\fE}^{\frac{3}{2}}+{\fE}^{\frac{1}{2}}\fF^{\frac{1}{2}}+\ve{\fE}\fF^{\frac{1}{2}}).
\end{equation}
Using Lemma \ref{norm:QR,vR} and integration by parts, it holds that
\begin{align}\label{Q,CH3}
 S_2=&-\ve^3\Big\langle\CH_{\nn}^\ve(\Delta Q_R),\Delta\Dot{Q}_R\Big\rangle+\ve^3\Big\langle[\CH_\nn,\Delta]Q_R,\Delta\Dot{Q}_R\Big\rangle\nonumber\\
=&-\ve^3\Big\langle\CH_{\nn}^\ve(\Delta Q_R),\Dot{\overline{\Delta {Q}_R}}+[\Delta,\Tilde{\vv}\cdot\nabla ]Q_R\Big\rangle+\ve^3\Big\langle[\CH_\nn,\Delta]Q_R,\Delta\Dot{Q}_R\Big\rangle\nonumber\\
=&~\ve^3\Big\langle[\CH_\nn,\Delta]Q_R,\Delta \dot{Q}_R\Big\rangle-\ve^3\Big\langle\CH_\nn^\ve(\Delta Q_R),[\Delta,\tilde{\vv}\cdot\nabla]Q_R\Big\rangle\nonumber\\
&-\ve^4\langle\CL(\Delta Q_R),(\tilde{\vv}\cdot\nabla)\Delta Q_R\rangle+\frac{\ve^3}{2}\langle[\partial_t+\Tilde{\vv}\cdot\nabla,\CH_\nn]\Delta Q_R,\Delta Q_R\rangle\nonumber\\
&-\frac{\ve^3}{2}\frac{\ud}{\ud t}\Big\langle\CH_{\nn}^\ve(\Delta Q_R),\Delta{Q}_R\Big\rangle\nonumber\\
=&~\ve^3\Big\langle[\CH_\nn,\Delta]Q_R,\Delta \dot{Q}_R\Big\rangle-\ve^3\Big\langle\CH_\nn(\Delta Q_R),[\Delta,\tilde{\vv}\cdot\nabla]Q_R\Big\rangle\nonumber\\
&+\ve^4\Big\langle\CL(\partial_i Q_R),\partial_i[\Delta,\tilde{\vv}\cdot\nabla]Q_R\Big\rangle-\ve^4\langle\CL(\Delta Q_R),(\tilde{\vv}\cdot\nabla)\Delta Q_R\rangle\nonumber\\&+\frac{\ve^3}{2}\langle[\partial_t+\Tilde{\vv}\cdot\nabla,\CH_\nn]\Delta Q_R,\Delta Q_R\rangle-\frac{\ve^3}{2}\frac{\ud}{\ud t}\Big\langle\CH_{\nn}^\ve(\Delta Q_R),\Delta{Q}_R\Big\rangle\nonumber\\
\leq &~C\Big(\ve^3\|Q_R\|_{H^1}\|\Delta \dot{Q}_R\|_{L^2}+\ve^3\|\CH_\nn(\Delta Q_R)\|_{L^2}\|\nabla Q_R\|_{H^1}\nonumber\\&+\ve^4\|\CL(\partial_iQ_R)\|_{L^2}\|\partial_i[\Delta,\tilde{\vv}\cdot\nabla]Q_R\|_{L^2}+\ve^4\|\Delta Q_R\|_{H^1}^2+\ve^3\|\Delta Q_R\|_{L^2}^2\Big)\nonumber\\&-\frac{\ve^3}{2}\frac{\ud}{\ud t}\Big\langle\CH_{\nn}^\ve(\Delta Q_R),\Delta{Q}_R\Big\rangle\nonumber\\
\leq&-\frac{\ve^3}{2}\frac{\ud}{\ud t}\Big\langle\CH_{\nn}^\ve(\Delta Q_R),\Delta{Q}_R\Big\rangle+C({\fE}+\ve^2{\fE}^{\frac{3}{2}}).
\end{align}
Therefore, we derive from (\ref{vQ3}) and (\ref{Q,CH3}) that
\begin{align*}
     -\ve^3\langle\Delta \CH_{\nn}^{\ve}(Q_R),\Delta\PP\rangle
     \leq-\frac{\ve^3}{2}\frac{\ud}{\ud t}\Big\langle\CH_{\nn}^\ve(\Delta Q_R),\Delta Q_R\Big\rangle+C\big({\fE}+\ve{\fE}^{\frac{3}{2}}+{\fE}^{\frac{1}{2}}\fF^{\frac{1}{2}}+\ve{\fE}\fF^{\frac{1}{2}}\big).
\end{align*}
Then the desired estimate follows.
\end{proof}

\bigskip
\noindent{\bf Acknowledgments.}
 Sirui Li is supported by the NSF of China under Grant No.12061019. Wei Wang is partially supported by NSF of China under Grant No.12271476 and 11931010.

\end{document}